\documentclass[leqno,12pt]{amsart}

\usepackage{graphicx}%
\usepackage{multirow}%
\usepackage{amsmath,amssymb,amsfonts,amsthm}%
\usepackage[numbers]{natbib}
\usepackage{mathrsfs}
\usepackage[hidelinks]{hyperref}
\usepackage{enumitem}
\usepackage{mathtools}%
\usepackage{nicefrac}
\usepackage[title]{appendix}%
\usepackage{xcolor}%
\usepackage{setspace}
\usepackage{bbm}

\DeclareRobustCommand{\bb}[1]{\mathbb{#1}}

\DeclareRobustCommand{\t}[1]{\text{#1}}

\DeclareRobustCommand{\set}[1]{\left\{#1 \right\}}
\DeclareRobustCommand{\Set}[2][]{\left\{#1 \, \middle|\, #2 \right\}}
\DeclareRobustCommand{\bk}[2]{\left\langle #1,\,#2\right\rangle}

\DeclareRobustCommand{\vp}{\varphi}

\DeclareRobustCommand{\Var}{\mathrm{Var}}

\newcommand\restr[2]{{
  \left.\kern-\nulldelimiterspace 
  #1 
  \vphantom{\rule{0pt}{9pt}} 
  \right|_{#2} 
  }}

\theoremstyle{remark}%
\newtheorem*{example}{Example}%
\newtheorem{remark}{Remark}%

\theoremstyle{definition}%
\newtheorem{assumption}{Assumption}

\newtheorem{theorem}{Theorem}%
\newtheorem{proposition}[theorem]{Proposition}%
\newtheorem{lemma}{Lemma}[section]
\newtheorem{corollary}[theorem]{Corollary}

\usepackage[T1]{fontenc}
\usepackage{tgtermes}

\calclayout

\begin{document}

\title[]{A scale-free density bound for Gaussian maxima}

\begin{abstract}\normalsize
We derive a {scale-free} bound on the density of the maximum of a centered Gaussian vector. 
The basic bound is non-uniform, depends logarithmically on the dimension, and allows any covariance matrix. 
When the largest marginal variance is separated from zero, it implies that the density of the maximum is uniformly controlled at all quantiles above $\nicefrac{2}{3}$, which is sufficient for many hypothesis testing applications; it yields validity of Gaussian and bootstrap approximations for maxima of high-dimensional sums at test levels $\alpha < \nicefrac{1}{3}$ without further restricting the covariance. 
Under these same conditions, the argument is extended to show that the maximum absolute value of a Gaussian vector has a uniformly bounded density on the real line.
The result also implies new bounds on the variance of the maximum. 
%
\end{abstract}

\author{Suhas Vijaykumar}
\address{Department of Economics, University of California, San Diego. 9500 Gilman Drive \#0508, La Jolla, CA 92093}
\email{svijaykumar@ucsd.edu}

\keywords{Anti-concentration, maxima, Gaussian approximation, bootstrap}

\hypersetup{
    pdftitle={A scale-free density bound for Gaussian maxima},
    pdfauthor={Suhas Vijaykumar},
    pdfkeywords={Anti-concentration, maxima, Gaussian approximation, bootstrap}
}

\maketitle

\onehalfspacing 

\section{Introduction}\label{sec:intro}

Gaussian and bootstrap approximations for maxima of sums have become an essential component of modern statistics, providing practical methods for inference in problems of high-dimensional selection, multiple testing, and non-parametric estimation. These tools are especially useful when the distribution being approximated has a complex form and its covariance is difficult to characterize, yet simulation-based inference remains effective \citep{chernozhukov2023high}.

A certain limitation of the theory, however, concerns \emph{degeneracy}: settings where coordinates have vanishing variance, a high degree of correlation, and where standardization is not viable. In these cases, it becomes difficult to verify that the approximating law has a bounded density (or is \emph{anti-concentrated}), which is needed to justify conventional statistical inference. This setting arises in a wide range of modern statistical problems, from multiple testing with genomic or gene-expression data, to anytime-valid sequential testing, to the construction of uniform confidence bands when data have latent, low-dimensional structure. 

In each of these cases, the standard remedy of dividing by coordinate-wise standard deviations can fail. This happens for two primary reasons: as $\sigma(x)$ approaches zero, uniform control of $|\hat\sigma(x)/\sigma(x)-1|$ becomes challenging, and, moreover, the standardized process may not be tight. In the context of high-dimensional correlation testing, the relevant ``coordinates'' are sparse linear combinations $\bk{\beta}{X}$, and estimating the variance of each such combination is non-trivial; indeed, certifying that the smallest restricted eigenvalue exceeds any fixed threshold is NP-hard \citep{bandeira2013certifying,dobriban2016regularity}. In anytime-valid sequential testing, standardizing the time-dependent variance results in an approximating Gaussian law that is not tight. In non-parametric inference on data with latent low-dimensional structure, pointwise variances depend strongly on the unknown intrinsic dimension and local geometry, causing the two problems to compound.

 This paper studies anti-concentration and Gaussian approximation under degeneracy. We state density bounds for the maximum which depend logarithmically on the dimension and, importantly, place minimal restrictions on the covariance. This leads to Gaussian and bootstrap approximations under similarly permissive conditions.

The most general bound covers the upper quantiles of the distribution, including the most relevant range for hypothesis testing. As a consequence, we demonstrate validity of standard Gaussian and bootstrap critical values for maxima of high-dimensional sums at levels $\alpha < \nicefrac{1}{3}$ under the simple requirement of a single non-degenerate coordinate.

These arguments further imply variance bounds and global anti-concentration bounds (i.e., ones that hold at all quantiles of the distribution), which have optimal dependence on the ambient dimension and which remain sharp under degeneracy. The following summarizes the main results.

\begin{theorem}[Main results, informal]\label{thm:main-informal}
       Let $(Z_1, \ldots, Z_p) \sim N(0,\Sigma)$ be a non-trivial, centered Gaussian vector in $\mathbb{R}^p$ for $p \ge 3$. Define the signed and unsigned maxima
    \(M = \max_{1 \le i \le p}Z_i\) and \(M^* = \max_{1 \le i \le p}|Z_i|\), and  put
    \(
        \sigma_{\max}^2 = \max_{1 \le i \le p}\mathbb{E}[Z_i^2].
    \) Then the law of $M$ has a density $f$ on $(0,\infty)$\footnote{By our convention, this does not restrict the law of $M$ on the negative line and allows a mass point at zero.} and for $t > 0$, \[f(t) \le \frac{4\log p}{t}.\]  This further implies:
    \begin{enumerate}[label = (\roman*)]

        \smallskip

        \item\label{item:variance-weak} $f(t) \lesssim (\sigma_\text{max} \vee \mu)^{-1}\log p$ holds for all $t$ exceeding the $\nicefrac{2}{3}$ quantile of $M$;

        \smallskip

        \item\label{item:unsigned-density} $M^*$ is continuously distributed with density $f^*$, and $f^*(t) \lesssim \sigma_{\max}^{-1}\log(2p)$ for all $t \in \mathbb{R}$;

        \item\label{item:variance-lb} if $\mu = \mathbb{E}[M]$ is positive, then
        \(
            \sqrt{\Var(M)} \gtrsim \mu / (\log p).
        \)

        \smallskip



    \end{enumerate}
\end{theorem}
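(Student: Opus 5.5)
The plan is to prove the basic bound $f(t)\le 4\log p/t$ by a scaling argument, and then obtain \ref{item:variance-weak}--\ref{item:variance-lb} from it by short comparison arguments. The starting point is that the bound is scale-free: it is the only inequality of the form $t f(t)\le C(p)$ invariant under $Z\mapsto sZ$. This suggests studying $s\mapsto F_s(t)=P(sM\le t)=P(sZ\in C_t)$ with $C_t=(-\infty,t]^p$, since
\[
t f(t) = -\tfrac{d}{ds}F_s(t)\big|_{s=1}.
\]
Smoothing $\mathbbm{1}\{M\le t\}$ by a function of the soft-max $F_\beta(Z)=\beta^{-1}\log\sum_i e^{\beta Z_i}$, I would write $M\,\delta(M-t)$ as the limit of $\sum_i Z_i\,\pi_i(Z)\,\psi_h(F_\beta(Z)-t)$, where $\pi_i=\partial_i F_\beta$ are the soft-max weights and $\psi_h$ is a bump of width $h$. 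Applying Gaussian integration by parts, $\mathbb{E}[Z_i g(Z)]=\sum_j\Sigma_{ij}\mathbb{E}[\partial_j g(Z)]$, produces two kinds of terms:
\begin{itemize}
\item terms involving $\sum_{ij}\Sigma_{ij}\partial_i\partial_jF_\beta$, which are nonnegative and controlled by $\beta\,\mathbb{E}\Var_\pi(Z)$;
\item a term $\mathbb{E}[\langle\pi,\Sigma\pi\rangle\,\psi_h'(\cdot)]$.
\end{itemize}
The choice $\beta\asymp(\log p)/t$ is the natural one, because on $\{M\approx t\}$ the soft-max differs from $M$ by at most $\beta^{-1}\log p$, and this is exactly what makes the bound depend on $t$ only through $t/\log p$.

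Concretely, I would aim to show that $h^{-1}P(M\in[t,t+h])\cdot t$ is bounded by $\beta^{-1}\log p$ times $t^{-1}\cdot(\text{density-type quantities})$ plus remainders. Choosing $\beta$ to balance these should give the constant $4$.

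An alternative I would keep in reserve uses the conditional representation
\[
f(t)=\sum_i \phi_{\sigma_i}(t)\,P\bigl(Z_j\le t\ \forall j\mid Z_i=t\bigr).
\]
Here I would split the coordinates into those with $\sigma_i\le t/\sqrt{2\log p}$, whose total contribution $\sum_i\phi_{\sigma_i}(t)$ is $O(\log p/t)$ by direct calculus, and the rest. The main obstacle in either route is the high-variance coordinates. For the first route this means controlling the entropy-like term uniformly in $\Sigma$, with no lower bound on the variances. I expect this to be where the real work lies, and where the factor $\log p$ should enter precisely as the entropy of the soft-max weights.

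For \ref{item:variance-weak}, let $t$ exceed the $\nicefrac{2}{3}$ quantile. Since $M\ge Z_{i^*}$, where $i^*$ maximizes the variance, we have $P(Z_{i^*}\le t)\ge \nicefrac{2}{3}$, which forces $t\ge c\,\sigma_{\max}$. The Borell--TIS inequality gives $P(M<\mu-u\sigma_{\max})\le e^{-u^2/2}$, so $t\ge \mu - C\sigma_{\max}$. Combining the two gives $t\gtrsim \sigma_{\max}\vee\mu$, and the basic bound then yields \ref{item:variance-weak}.

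For \ref{item:unsigned-density}, I apply the basic bound to the $2p$-vector $(Z,-Z)$, whose maximum is $M^*$. This gives $f^*(t)\lesssim \log(2p)/\sigma_{\max}$ for $t\ge t_0:=c\sigma_{\max}$. For $t<t_0$ I use that $G(t)=P(M^*\le t)$ is the Gaussian measure of the dilated symmetric convex set $t[-1,1]^p$, so $G$ is log-concave in $t$. Hence $G'/G$ is nonincreasing, and for $t\le t_0$,
\[
f^*(t)\le \frac{f^*(t_0)\,G(t)}{G(t_0)}\le \frac{f^*(t_0)}{G(t_0)}.
\]
It remains to choose $t_0$ so that $G(t_0)$ is bounded below. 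Continuity of $M^*$ follows because $M^*$ has no atom at $0$ when $\Sigma\ne0$.

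For \ref{item:variance-lb}, the basic bound gives $f\le 8\log p/\mu$ on $[\mu/2,\infty)$. Therefore the interval $[\mu-\mu/(32\log p),\,\mu+\mu/(32\log p)]$ has probability at most $\nicefrac{1}{2}$. Chebyshev's inequality then forces $\sqrt{\Var(M)}\gtrsim\mu/\log p$.
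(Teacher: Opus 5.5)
\emph{Verdict: there are two genuine gaps. The main bound is not proved, and the small-$t$ argument for (ii) runs the wrong way; given the main bound, your arguments for (i) and (iii) are correct.}

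\emph{First gap: the basic bound.} The estimate $f(t)\le 4\log p/t$, on which (i)--(iii) all rest, is never established.

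The soft-max route is a programme rather than an argument. After integration by parts, the term $\mathbb{E}[\langle\pi,\Sigma\pi\rangle\,\psi_h'(F_\beta(Z)-t)]$ equals $-\frac{d}{dt}\mathbb{E}[\langle\pi,\Sigma\pi\rangle\,\psi_h(F_\beta(Z)-t)]$. That is the $t$-derivative of a weighted, smoothed density and has no sign. Bounding it from above already needs the density control you are trying to prove, and you give no estimate for it or for the ``entropy-like'' term.

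Your reserve route starts from the right identity. With $Z=Ag$, each summand $\phi_{\sigma_i}(t)\,\mathbb{P}(Z_j\le t\ \forall j\mid Z_i=t)$ is exactly $\sigma_i^{-1}\int_{F_i}\varphi_s\,d\mathscr{H}^{s-1}$. This is the Gaussian surface area of the facet $F_i$ of $\{x:\langle\nu_i,x\rangle\le t/\sigma_i\ \forall i\}$, which lies at distance $t/\sigma_i$ from the origin. Bounding the conditional probability by $1$ for low-variance coordinates is what the paper does. But the argument stops where the difficulty is: for $\sigma_i>u$, bounding the conditional probabilities by $1$ gives a bound linear in $p$.

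The missing idea is Nazarov's estimate $\int_{F_i}\varphi_s\,d\mathscr{H}^{s-1}\le(1+t/\sigma_i)\,\gamma_s(N_i)$, together with disjointness of the outward normal columns $N_i$, so that $\sum_i\gamma_s(N_i)\le 1$. This bounds the whole high-variance contribution by $(u+t)/u^2$, uniformly in $\Sigma$. Taking $u=t/\sqrt{2\log p}$ then gives $\{2\log p+(\sqrt2+\pi^{-1/2})\sqrt{\log p}\}/t\le 4\log p/t$.

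\emph{Second gap: (ii) for $t<t_0=c\,\sigma_{\max}$.} Log-concavity of $G$ makes $G'/G$ non-increasing. So for $t\le t_0$ it gives $f^*(t)\ge f^*(t_0)G(t)/G(t_0)$, the reverse of the inequality you use. Monotonicity of this kind, or of $(\Phi^{-1}\circ G)'$ via Ehrhard, only carries density bounds to the right, never toward $0$. Moreover, $G(c\,\sigma_{\max})$ is not bounded below in general: for $\Sigma=I_p$ it equals $(2\Phi(c)-1)^p$.

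The paper instead normalizes $\mathbb{E}[Z_1^2]=\sigma_{\max}^2=1$ and conditions on $Z_1$. It writes $\mathbb{P}\{M^*\le t\}=\int_{-1}^1 t\,\varphi(tu)\,H_u(t)\,du$, where $H_u$ is the distribution function of a \emph{centered signed} maximum in dimension $2p-2$, built from the residuals $Z_i-\beta_{1i}Z_1$. On differentiating, the factor $t$ from the substitution $z_1=tu$ (that is, from $|Z_1|\le t$) cancels the $1/t$ in $h_u(t)\le 4\log(2p-2)/t$.

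\emph{Parts (i) and (iii).} Given the basic bound, your argument for (i) is essentially the paper's Corollary~\ref{cor:upper-tail-bound}. Your Chebyshev argument for (iii) is a simpler, valid alternative to the paper's bathtub Lemma~\ref{lem:bathtub}, at the cost of the constant: $\mu/(32\sqrt2\log p)$ versus $\mu/(15\log p)$.
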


The first stated bound imposes no condition on the covariance $\Sigma$ whatsoever; it is scale-free in this sense.  
Implication \ref{item:variance-weak} shows that the density is bounded at testing-relevant quantiles whenever $\max_i\sigma_i$ is bounded away from zero, regardless of how many entries of $Z$ have near-zero variance or how correlated they may be (Proposition \ref{prop:general-finite-bound}).
 For unsigned maxima, implication \ref{item:unsigned-density} extends this to the whole real line.
 Implication \ref{item:variance-lb} is a lower bound on the variance of the maximum in terms of its expectation (Proposition \ref{prop:variance-bound}). 

 The conclusions of Theorem \ref{thm:main-informal} sharpen when further information on $\Sigma$ is available, such as the weak variance decay considered by \citet{lopes2020bootstrapping}; more detailed versions of the result and further discussion, including a characterization of anti-concentration under weak variance decay and an investigation of the density at extremal quantiles, are contained in Section \ref{sec:main-technical}. 

By combining Theorem \ref{thm:main-informal} with established couplings (see e.g.~\cite{koike2021notes}), we recover validity of critical values based on Gaussian and bootstrap approximations of maxima of high-dimensional sums under degeneracy. Formal results of this type, and analogous statements for the multiplier bootstrap, are given in Section \ref{sec:applications}. 

\begin{corollary}[Approximation under degeneracy, informal]\label{cor:informal}
    Let $S_n = \frac{1}{\sqrt n}\sum_{i=1}^n X_i$ be an i.n.i.d.~sum of centered random vectors in $\mathbb{R}^p$ with \emph{unrestricted} variance $\Sigma$, such that $\|X_{ij}\|_{\psi_1} \lesssim 1$, with $n, p \ge 3$. Let $Z$ be a Gaussian vector with variance $\Sigma$, and let $t_q$ be the $q^{\mathrm{th}}$ quantile of $\max_{1 \le j \le p} Z_j$. Let $\sigma_{\max}^2$ be the largest diagonal entry of $\Sigma$. Then, with $\Delta_{n,p} = \log^7(p)/ n$,
    \[
        \sup_{q \ge \nicefrac{2}{3}}\left|\,\mathbb{P}\Big\{\max_{1 \le j \le p}(S_n)_j \le t_q\Big\} - \mathbb{P}\Big\{\max_{1 \le j \le p} Z_j \le t_q\Big\}\right| \;\lesssim\; \sigma_{\max}^{-2/3}\Delta_{n,p}^{1/6}
    \]
    For the unsigned maximum, the approximation holds uniformly over all quantiles:
    \[
        \sup_{t \in \mathbb{R}}\left|\,\mathbb{P}\Big\{\max_{1 \le j \le p}|(S_n)_j| \le t\Big\} - \mathbb{P}\Big\{\max_{1 \le j \le p}| Z_j| \le t \Big\}\right|
        \;\lesssim\;
        \sigma_{\max}^{-2/3}\Delta_{n,p}^{1/6}.
    \]
\end{corollary}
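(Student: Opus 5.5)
The plan is to combine a Gaussian coupling (smoothing) inequality for maxima of sums with the anti-concentration bounds of Theorem \ref{thm:main-informal}. First I would fix the form of the coupling input. Under $\|X_{ij}\|_{\psi_1}\lesssim 1$, the results collected in \cite{koike2021notes} (Chernozhukov--Chetverikov--Kato type, with no lower bound on the variances) give a Gaussian vector $\tilde Z \overset{d}{=} Z$ on the same probability space with
\[
\mathbb{P}\Big\{\big|\max_j (S_n)_j - \max_j \tilde Z_j\big| > \delta\Big\} \lesssim \frac{\log^{a}(p)}{\delta^{b}\, n^{c}}
\]
for explicit exponents $a,b,c$. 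Equivalently, I could use a smooth-max comparison $|\mathbb{E} g(S_n) - \mathbb{E} g(Z)| \lesssim \|g'''\|\,\log^{k}(p)/\sqrt n$ for $g=\varphi_\delta\circ F_\beta$, with $F_\beta$ the soft-max at temperature $\beta\asymp \delta^{-1}\log p$. Both routes avoid any anti-concentration assumption on $Z$; that step is exactly what is outsourced to Theorem \ref{thm:main-informal}.

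Next comes the standard sandwich. For any $t$,
\[
\mathbb{P}\{\max S_n \le t\} \le \mathbb{P}\{\max Z \le t+\delta\} + \pi(\delta), \qquad \mathbb{P}\{\max S_n\le t\}\ge \mathbb{P}\{\max Z\le t-\delta\} - \pi(\delta),
\]
where $\pi(\delta)$ is the coupling error. The difference at $t=t_q$ is therefore bounded by $\pi(\delta) + \mathbb{P}\{t_q-\delta< M\le t_q+\delta\}$. The second term is at most $2\delta \sup_{s\in[t_q-\delta,t_q+\delta]} f(s)$.

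The key step is bounding this local density. By implication \ref{item:variance-weak}, $f(s)\lesssim \sigma_{\max}^{-1}\log p$ for $s$ above the $2/3$ quantile. The window around $t_q$ dips below $t_q$, though, so I would work on the event $\{t_q - \delta\ge t_{2/3}\}$. Otherwise I would fall back on the basic bound $f(s)\le 4\log p/s$, together with the fact that $t_{2/3} \gtrsim \sigma_{\max}$. The latter holds because a single coordinate of variance $\sigma_{\max}^2$ already exceeds $c\sigma_{\max}$ with probability more than $1/3$, and $M$ dominates that coordinate. Hence for $\delta\le c\sigma_{\max}/2$ every $s$ in the window satisfies $s\gtrsim\sigma_{\max}$, and $f(s)\le 4\log p/s\lesssim \sigma_{\max}^{-1}\log p$. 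The windowed probability is then $\lesssim \delta\sigma_{\max}^{-1}\log p$. I expect this to be the main technical point: the density control must hold on a neighborhood of $t_q$ and not only at $t_q$, and the scale-free $1/t$ bound is what makes that neighborhood harmless. If $\delta> c\sigma_{\max}/2$, the claimed bound exceeds $1$ anyway after adjusting constants, so the statement is trivial.

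Finally, I would optimize over $\delta$. Balancing $\delta\sigma_{\max}^{-1}\log p$ against the coupling/smoothing error $\pi(\delta)\asymp \log^{k}(p)/(\delta^{3}\sqrt n)$, which is the form from the third-derivative smoothing, gives $\delta^4\asymp \sigma_{\max}\log^{k-1}(p)/\sqrt n$. The resulting total is $\sigma_{\max}^{-3/4}\cdots$. To land exactly on $\sigma_{\max}^{-2/3}\Delta_{n,p}^{1/6}$, I would instead use the coupling in the form $\pi(\delta)\lesssim (\log^{5}(p)/n)^{1/2}/\delta^{2}$, available for $\psi_1$ tails in \cite{koike2021notes}. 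Balancing $\delta\sigma_{\max}^{-1}\log p = \delta^{-2}\log^{5/2}(p)\, n^{-1/2}$ then gives $\delta^3 = \sigma_{\max}\log^{3/2}(p)n^{-1/2}$ and error $\sigma_{\max}^{-2/3}(\log^7 p/n)^{1/6}$, as claimed. The exact exponents depend on which coupling is quoted, and I would adjust the choice of $\delta$ accordingly.

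For the unsigned maximum, I would apply the same argument to the $2p$-dimensional vector $(Z,-Z)$ and the sum $(S_n,-S_n)$. By implication \ref{item:unsigned-density}, $f^*\lesssim\sigma_{\max}^{-1}\log(2p)$ on all of $\mathbb{R}$, so no quantile restriction is needed and the supremum runs over all $t$.
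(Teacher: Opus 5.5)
Your route is the paper's: Koike's one-sided Prokhorov coupling (Lemma~\ref{lem:onesided-prokhorov}), then the two-sided sandwich, then window anti-concentration from $f(s)\le 4\log p/s$ combined with $t_{2/3}\ge \Phi^{-1}(2/3)\,\sigma_{\max}$. The paper takes $\varepsilon\le\sigma_{\max}/40$ so that $[t_q-6\varepsilon,t_q+6\varepsilon]$ stays above $\sigma_{\max}/4$. It also uses triviality when the optimal window is too wide, and reflection to dimension $2p$ with Proposition~\ref{prop:unsigned-density-bound} for $M^*$.

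However, the one computation that produces the rate is wrong as written. With your input $\pi(\delta)\asymp \log^{5/2}(p)\,\delta^{-2}n^{-1/2}$, the balance $\delta^3=\sigma_{\max}\log^{3/2}(p)\,n^{-1/2}$ gives total error
$\delta\,\sigma_{\max}^{-1}\log p=\sigma_{\max}^{-2/3}\log^{3/2}(p)\,n^{-1/6}=\sigma_{\max}^{-2/3}(\log^9 p/n)^{1/6}$.
That is not the claimed $\log^7$ rate. The correct input is the remainder actually supplied by Koike's Lemma 5.7,
$C\varepsilon^{-2}\{K^2(\log p)^{3/2}n^{-1/2}+K^4(\log p)^2(\log n)^2n^{-1}\}$.
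Balancing $\varepsilon\log p/\sigma_{\max}$ against its first term gives $\varepsilon^*\asymp\sigma_{\max}^{1/3}K^{2/3}(\log p)^{1/6}n^{-1/6}$ and error $R:=\sigma_{\max}^{-2/3}(K^4\log^7p/n)^{1/6}$. This is the stated bound.

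You also skipped two pieces of bookkeeping that the paper needs.
\begin{enumerate}
\item The second remainder term satisfies $(\varepsilon^*)^{-2}K^4(\log p)^2(\log n)^2/n\asymp R\cdot K^2(\log p)^{1/2}(\log n)^2/\sqrt n$. This is at most $R$ under Assumption~\ref{assn:sampling}.
\item The coupling holds only for $\varepsilon\ge CK^2n^{-1/2}(\log n)(\log p)$. If $\varepsilon^*$ falls below this threshold, substituting into $R\asymp(\varepsilon^*)^{-2}K^2(\log p)^{3/2}n^{-1/2}$ gives $R\gtrsim\sqrt n/\{K^2(\log p)^{1/2}(\log n)^2\}\gtrsim 1$, so the bound is again trivial.
\end{enumerate}
With these corrections your argument coincides with the paper's proof. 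The unsigned case goes through identically with $\log(2p)$ in place of $\log p$ and no quantile restriction.
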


Corollary \ref{cor:informal} and its bootstrap analogs (formally stated as Corollaries \ref{cor:nonuniform-clt}-\ref{cor:uniform-bootstrap}) show that a single non-degenerate coordinate is sufficient for high-dimensional approximation of the upper quantiles of $\max_j (S_n)_j$, and for \emph{all} quantiles of $\max_j |(S_n)_j|$, in the regime $\log^7(p)/n \to 0$. Finally, in the standard setting where $\mathbb{E}[M] \gtrsim \sigma_{\max}\sqrt{\log p}$, approximation of the signed maximum holds at all quantiles, and the dimension dependence improves to $\log^5(p)/n \to 0$.

To motivate our results, we next describe three examples of modern statistical problems involving approximation by degenerate Gaussian vectors. 

\subsection*{Example 1: robustness of sparse correlation detection} Detecting sparse correlations\textemdash whether some subset $S \subset [p]$ of at most $s$ entries of the covariate vector $X \in \mathbb{R}^p$ carries a non-trivial linear association with an outcome $Y$\textemdash is a fundamental task in high-dimensional inference. \citet{fan2018discoveries} consider the null hypothesis of no association, which amounts to a moment restriction that for all $\beta \in \mathbb{R}^p$ with $\|\beta\|_2 = 1$ and $\|\beta\|_0 \le s$,
\(\mathbb{E}[\bk{\beta}{X_i} Y_i] = 0\).\footnote{Here $\|\beta\|_0$ denotes the number of non-zero coordinates of the vector $\beta$.}
They show that the null distribution of 
\begin{equation}\label{eq:max-sparse-corr}
    M_{s,n} \;\coloneqq \; \sup_{\substack{\|\beta\|_2 = 1 \\ \|\beta\|_0 \le s}} \frac{1}{n}\sum_{i=1}^n \bk{\beta}{X_i}\, Y_i,
\end{equation}
is approximated by $\sup_\beta \bk{\beta}{Z}$ for $Z \sim N(0, \Sigma_n)$ with $\Sigma_n$ the sample covariance of $(Y_i X_i)_{i=1}^n$. 
The resulting test was shown to be valid provided that the \emph{restricted eigenvalue}
\begin{equation}\label{eq:gwas-restricted-eig}
    \nu_s \;=\; \inf_{\substack{\|\beta\|_2 = 1 \\ \|\beta\|_0 \le s}}\, \mathbb{E}\langle \beta, X_i\rangle^2
\end{equation}
is bounded away from zero. In their framework, the condition provides a natural route to the anti-concentration needed for bootstrap validity, as it also underlies methods used to discover sparse correlations. However, it is challenging to verify: lower-bounding $\nu_s$ from data is itself NP-hard \citep{bandeira2013certifying,dobriban2016regularity}. It also matters in practice: in genetic data, meiosis produces blocks of highly correlated coordinates whose effective dimension is well below the block length \citep{international2005haplotype}, yielding $s$-sparse contrasts $\beta$ with $\mathbb{E}\langle \beta, X_i\rangle^2 \approx 0$. A similar phenomenon arises in gene-expression data \cite{wang2019precision}.
Corollary \ref{cor:informal}, combined with a standard discrete approximation, suggests that bootstrap critical values for $M_{s,n}$ are asymptotically valid at every level $\alpha < \nicefrac{1}{3}$ with no condition on $\nu_s$ whatsoever. 

\subsection*{Example 2: asymptotic confidence sequences with arbitrary spending} In asymptotic confidence-sequence constructions, sequential test statistics converge to Brownian paths evaluated against time-dependent boundaries, with the boundary encoding the desired allocation of type-I error over time \citep{waudby2024time}. Rather than restricting attention to analytically tractable boundaries, one can calibrate a boundary by simulating the limiting Gaussian process and estimating a quantile of
\[
M_{A,T,\psi}
    = \sup_{t\in T,\;a\in A}\psi(t)\langle a,B_t\rangle,
\]
where $T$ is a grid of monitoring times and $A$ indexes outcomes or contrasts \citep{gnettner2025new}. The pointwise variance is proportional to $t\psi(t)^2$, which, for common boundaries, approaches zero at early times or over an infinite horizon. After discretization, our results support simulation-based calibration whenever the largest pointwise variance is non-trivial, without requiring the minimum variance to be separated from zero. This suggests a route to multivariate confidence sequences and implicitly specified spending rules.

\subsection*{Example 3: confidence bands under latent structure} Many modern datasets concentrate on low-complexity subsets of a much higher-dimensional ambient space, and a growing literature studies estimators that can adapt to the latent geometry of the data \citep{kim2019uniform,cleanthous2020kernel,berenfeld2024estimating,green2021minimax}.
More broadly, modern statistical problems increasingly involve structured or non-standard data for which degeneracy of the approximating process is unavoidable \citep{singh2023kernel}.
Suppose that a distribution \(P\) on \(\mathbb{R}^D\) has heterogeneous, locally low-dimensional structure, and consider inference on the smoothed target \(p_h(x)=\mathbb{E}\widehat p_h(x)\), where
\[\widehat p_h(x)=\frac{1}{nh^D}\sum_{i=1}^nk\left(\frac{x-X_i}{h}\right).\]
The pointwise variance, analyzed by \citet{kim2019uniform}, satisfies
\[\operatorname{Var}{\widehat p_h(x)}\lesssim n^{-1}h^{-2D}P\{B(x,h)\}\asymp n^{-1}h^{-(2D-d(x))}\]
when 
\(P\{B(x,r)\}\asymp r^{d(x)}\). 
Thus, changes in local dimension or local mass can produce large variance ratios: under the ambient normalization, absolute fluctuations are larger at lower \(d(x)\), while the variance may be negligible in low-mass regions or away from the effective support.  
This creates a mismatch with confidence band constructions motivated by classical non-parametric estimation, where it is natural for pointwise variances to be standardized \citep{chernozhukov2014anti}. Standardization produces conditions such as $\sup_x|\hat\sigma(x)/\sigma(x) - 1| = o_p(1)$, which are prohibitive when $D$ is large and $\sigma(x)$ can be tiny. In contrast, Corollary \ref{cor:informal} can be applied to the discretized function class without rescaling, avoiding the need to learn $\sigma(x)$ uniformly at multiplicative scale.

\subsection{Related work}

Densities of maxima of Gaussian vectors are classical objects of study, and \citet{chernozhukov2014anti} demonstrate their central role in high-dimensional and non-parametric statistics. They showed that controlling the density of the Gaussian maximum\textemdash together with an appropriate high-dimensional coupling\textemdash suffices to establish distributional approximation in several problems of practical and theoretical interest. A general anti-concentration bound was obtained for centered vectors by \citet{chernozhukov2015comparison}, yielding a density bound of order $(\min_i \sigma_i)^{-1}\sqrt{\log p}$, which is proved to be essentially optimal when all marginal variances are of constant order. Further work extended this result to the non-centered case \cite{chernozhukov2016empirical,chernozhukov2017central} via Nazarov's inequality on the Gaussian surface area of polytopes \cite{nazarov2004maximal,klivans2008learning}.

Subsequent work has sought to relax the dependence on $\min_i \sigma_i$. \citet{lopes2020bootstrapping}, in establishing improved bootstrap approximations under weak variance decay, also obtain improved density bounds by separating the contributions of large and small variances. \citet{deng2020beyond}, who provide novel non-Gaussian bootstrap couplings, further improve anti-concentration estimates in certain regimes including for non-centered Gaussian vectors (see also \cite{chernozhukov2016empirical}). More recently, \citet{giessing2023anti} studies general order statistics and relates the problem to control of the variance of the maximum. With the exception of \citet{lopes2020bootstrapping}, who exploit structured variance decay and control of correlations, dependence on the minimum coordinate variance persists in these works.

This paper eliminates the dependence on $\min_i\sigma_i$ for control of the density of centered maxima, under appropriate conditions, and it provides anti-concentration bounds which remain valid under degeneracy. 
Our approach exploits the geometry of centered maxima\textemdash specifically, that facets of the max polytope which correspond to small variance coordinates lie far from the origin.

\subsection*{Organization} The remainder of the paper is organized as follows. Section \ref{sec:main-technical} contains the main technical density bounds for Gaussian maxima: a non-uniform bound that requires essentially no condition on the covariance, location-uniform bounds and variance control under mild conditions on $\mathbb{E}[\max_i Z_i]$, a characterization of the density under weak variance decay, and a bound at extremal quantiles. Section \ref{sec:applications} works out the implications for Gaussian and multiplier-bootstrap approximation of maxima of high-dimensional sums under weakened variance restrictions. Section \ref{sec:proofs} gives the proofs of the main density bounds; Appendix \ref{apx:proofs} collects auxiliary results, and Appendix \ref{sec:application-proofs} gives the proofs of the implications.


\section{Anti-concentration of degenerate maxima}\label{sec:main-technical}

We begin with a density bound for arbitrary finite-dimensional Gaussian vectors. The novelty of the bound is that it is non-uniform in the location $t$, but uniform in the covariance $\Sigma \succeq 0$. We subsequently show how to ``lift'' the inequality to bounds which are uniform over $t \in \mathbb{R}$, or over relevant quantiles.

\subsection{A non-uniform, scale-free density bound}\label{sec:nonuniform-bound}

\begin{proposition}\label{prop:general-finite-bound}Let $Z \sim N(0,\Sigma)$ be a centered Gaussian vector in $\mathbb{R}^p$ for $p \ge 3$, with $\sigma_i^2 = \bb{E}[Z_i^2]$, and write $M = \max_{1 \le i \le p} Z_i$. Define the effective dimension $p^*(t) \le p$ according to
\[p^*(t) = \#\Big\{1 \le i \le p : \sigma_i \ge t(2 \log p + 2\log\log p)^{-\frac 1 2}\Big\} \vee2.\] Then, for any $t > 0$,
\begin{equation}
    \lim_{\delta \downarrow 0} \frac{1}{\delta} \mathbb{P}\left\{ t \le M < t+ \delta \right\} \le 4 \min\left\{ \frac{\log p^*(t) + \nicefrac{1}{4}}{t}, \frac{ \log p}{t}\right\}.\label{eq:general-finite-bound}
\end{equation}
\end{proposition}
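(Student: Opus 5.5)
We work with the density decomposition of the maximum. For $t>0$ and $\delta\downarrow 0$, the event $\{t\le M<t+\delta\}$ is, up to $o(\delta)$, the disjoint union over $i$ of the events that coordinate $i$ crosses level $t$ while all others stay below $t$. Coordinates with $\sigma_i=0$ are identically zero and never reach $t>0$, so they can be discarded. This gives
\[
f(t)=\sum_{i:\sigma_i>0}\frac{1}{\sigma_i}\,\phi\!\left(\frac{t}{\sigma_i}\right)\,\mathbb{P}\{Z_j\le t\ \forall j\ne i\mid Z_i=t\},
\]
which is the standard Rice/Tsirelson-type formula, justified by conditioning on $Z_i$ and noting that ties have probability zero on $\{M>0\}$.

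The key scale-free idea is to rescale the location and not the variance. Write $Z_i=\sigma_i g_i$ and condition on $Z_i=t$. By Gaussian regression, $Z_j\mid Z_i=t$ has mean $t\,\Sigma_{ij}/\sigma_i^2$. The cleaner route, though, is to compare $f(t)$ with the derivative in a scale parameter. Since $M(\lambda Z)=\lambda M(Z)$, we have $\mathbb{P}\{M\le t\}=\mathbb{P}\{M(\lambda Z)\le \lambda t\}$. Differentiating in $\lambda$ at $1$ gives
\[
t f(t)=\mathbb{E}\bigl[\langle Z,\nabla\rangle\,\mathbf 1\{M\le t\}\bigr],
\]
interpreted via the identity $t f(t)=\sum_i \mathbb{E}[Z_i\,;\,Z_i=M\approx t]$-type expressions. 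In other words, $t f(t)=\sum_i t\cdot p_i(t)$, where $p_i(t)$ is the density contribution of coordinate $i$.

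I will instead bound each term directly. Using $\mathbb{P}\{\cdot\mid Z_i=t\}\le 1$ is too crude, so I keep the conditional probability and use the event structure. The sum $\sum_i \mathbb{P}\{i \text{ is argmax},\,M\in[t,t+\delta)\}$ is controlled by splitting the coordinates at the threshold $\tau=t(2\log p+2\log\log p)^{-1/2}$.

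Small-variance coordinates, those with $\sigma_i<\tau$, contribute at most
\[
\sum_i \frac{1}{\sigma_i}\phi\!\left(\frac{t}{\sigma_i}\right)\le p\,\sup_{\sigma<\tau}\frac{1}{\sigma}\phi\!\left(\frac{t}{\sigma}\right).
\]
Since $x\phi(x)$ is decreasing for $x>1$, this supremum is attained at $\sigma=\tau$. With $x=t/\tau=\sqrt{2\log p+2\log\log p}$, the bound becomes $\frac{p}{t}x\phi(x)=\frac{x}{t\sqrt{2\pi}\log p}$, which is $\lesssim 1/t$ and accounts for the additive constant $\nicefrac14$ after bookkeeping.

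Large-variance coordinates are those in $S$, with $|S|\le p^*(t)$. Here I need $f_S(t)\le 4\log|S|/t$, which is the first bound applied to the subvector $Z_S$. This is legitimate because for $j\in S$ the event "$j$ is the argmax of $Z$ at level $t$" is contained in the corresponding event for $Z_S$. So the whole statement reduces to proving $f(t)\le 4\log p/t$ for every $p\ge 2$, with $\log 2$ absorbed when $p^*=2$.

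For that core bound I use the scaling identity. Let $F_\lambda(t)=\mathbb{P}\{M\le t/\lambda\}$. Then $t f(t)=-\frac{d}{d\lambda}\mathbb{P}\{\lambda M\le t\}|_{\lambda=1}$. Expressing it via Gaussian integration by parts along the radial direction gives $t f(t)=\mathbb{E}[\,(\|W\|^2$-type quantity$)\,]$ restricted to the boundary of the polytope $\{z: \max z_i\le t\}$. Equivalently, write $Z=\Sigma^{1/2}G$ and use polar coordinates: $G=R\theta$, where $R^2\sim\chi^2_r$ is independent of $\theta$. Conditionally on $\theta$, $M=R\,m(\theta)$ with $m(\theta)=\max_i(\Sigma^{1/2}\theta)_i$. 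On $\{m(\theta)>0\}$, the density of $M$ at $t$ is $\frac{1}{m}f_R(t/m)$. Hence
\[
t f(t)=\mathbb{E}_\theta\Bigl[\tfrac{t}{m}f_R(t/m)\Bigr]\le \sup_{s>0} s f_R(s),
\]
which is bounded by a constant, but only if $r$ is bounded. This is the main obstacle: the rank $r$ may be large, and then $sf_R(s)\asymp\sqrt r$.

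To get $\log p$ instead of $\sqrt r$, I would replace the radial variable by a one-dimensional projection adapted to the argmax. On the event that $i$ is the argmax, decompose $G$ along the unit vector $u_i=\Sigma^{1/2}e_i/\sigma_i$. Shifting along $u_i$ moves $Z_i$ at unit speed and $Z_j$ at speed $\rho_{ij}\sigma_j/\sigma_i$.

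Finally, I apply a union/entropy argument over the $p$ facets: the mass of $\bigcup_i\{Z_i\in[t,t+\delta)\}$ weighted by $t$ is bounded via $\mathbb{E}[M\,\mathbf 1\{\cdot\}]$ and the maximal inequality $\mathbb{E}\max_i g_i^2\le 2\log p+O(1)$. The constant $4$ should come out of $\mathbb{E}\max(g_i^2)\le 2\log p\cdot 2$ for $p\ge3$. I expect getting this last step rigorous, with the clean constant, to be the hard part.
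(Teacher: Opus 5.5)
There is a genuine gap. Everything rests on the core inequality $f(t)\le 4\log p/t$, and you never prove it. Two parts of your proposal are sound. First, reducing the $p^*(t)$ bound to the core bound works: for $j\in S$ each Rice term only grows when you drop the conditioning constraints for $k\notin S$. Second, your treatment of the coordinates with $\sigma_i<\tau$ matches the paper. The core bound is the whole difficulty, though, and none of your routes reach it:
\begin{itemize}
\item The polar-coordinate route only gives $tf(t)\lesssim\sqrt{\operatorname{rank}\Sigma}$, as you note.
\item The \emph{projection adapted to the argmax} is left unfinished.
\item The final \emph{union/entropy} step via $\mathbb{E}\max_i g_i^2\le 2\log p+O(1)$ is not an argument. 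Nothing connects the Rice terms to $\mathbb{E}\max_i g_i^2$, and that is not where the $\log p$ comes from.
\end{itemize}
A minor point: ties need not be null when two coordinates coincide a.s. Your formula then overcounts, which is harmless for an upper bound.

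The missing idea is Nazarov's normal-cone estimate, applied with facet distances $t/\sigma_i$.
\begin{itemize}
\item Setup: write $Z=Ag$ with $g\sim N(0,I_s)$ and rows $a_i=\sigma_i\nu_i$, and set $K=\{x:\langle\nu_i,x\rangle\le t/\sigma_i\ \forall i\}$. Your $i$-th Rice term is exactly $\sigma_i^{-1}\int_{F_i}\varphi_s\,d\mathscr{H}^{s-1}$. Here $F_i$ is the facet of $K$ lying in the hyperplane at distance $t/\sigma_i$ from the origin.
\item Mills ratio: for $x\in F_i$, $\int_0^\infty\varphi_s(x+y\nu_i)\,dy=\varphi_s(x)\int_0^\infty e^{-yt/\sigma_i-y^2/2}\,dy\ge\varphi_s(x)/(1+t/\sigma_i)$. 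Hence $\int_{F_i}\varphi_s\le(1+t/\sigma_i)\,\gamma_s(N_i)$, where $N_i=\{x+y\nu_i:x\in\mathrm{relint}\,F_i,\ y>0\}$.
\item Disjointness: the $N_i$ are pairwise disjoint, because the metric projection onto $K$ of any point of $N_i$ lies in $\mathrm{relint}\,F_i$. Therefore $\sum_i\gamma_s(N_i)\le 1$.
\item Large-variance facets: all coordinates with $\sigma_i>u$ together contribute at most $\sum_i\sigma_i^{-1}(1+t/\sigma_i)\gamma_s(N_i)\le (u+t)/u^2$. This holds independently of $p$ and of the rank, and it is where centering enters.
\item Small-variance facets: the remaining coordinates contribute at most $p\,u^{-1}\varphi(t/u)$ when $u\le t$, by your monotonicity of $x\varphi(x)$.
\item Balancing: taking $u=t/\sqrt{2\log p}$ gives $2\log p/t+(\sqrt2+1/\sqrt\pi)\sqrt{\log p}/t\le 4\log p/t$ for $p\ge 3$.
\end{itemize}
The $\log p$ arises from balancing $t/u^2$ against $p\,\varphi(t/u)/u$, not from a maximal inequality.
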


\begin{proof}[Proof sketch] Proposition \ref{prop:general-finite-bound} follows from modifying Nazarov's bound on the Gaussian surface area of a polytope \cite{nazarov2004maximal,klivans2008learning}. Its proof, given in Section \ref{sec:proofs}, crucially exploits the fact that $Z$ is centered. 
As in the standard reduction to Nazarov's bound, one begins by noting that $\mathbb{P}\{M \le t\}$ coincides with the standard Gaussian measure of the polytope $P_t=\{u \in \mathbb{R}^p: \max_i \, (\Sigma^{\nicefrac 1 2}u)_i \le t\}$. Since $Z$ is centered, the facet of $P_t$ corresponding to the inequality $\{(\Sigma^{\nicefrac 1 2}u)_i \le t\}$ is separated from the origin by a distance of precisely $t/\sigma_{i}$. Tracking this data within Nazarov's argument, one obtains
\begin{equation}\label{eq:master-sketch}
\lim_{\delta \downarrow 0} \frac{1}{\delta} \mathbb{P}\left\{ t \le M < t+ \delta \right\} \le \left(\frac{u + t}{u^2}\right) + \sum_{0<\sigma_i < u} \frac{1}{\sigma_i} \vp\left(\frac{t}{\sigma_i}\right),
\end{equation}
where $\vp(t)$ is the Gaussian density. This automatically limits the contribution of low-variance coordinates: for example, coordinates with $\sigma_i < t(2 \log p + 2\log\log p)^{-\frac 1 2}$ contribute at most $1$. Optimizing the threshold $u$ to handle the remaining $p^*(t)$ coordinates produces the bound \eqref{eq:general-finite-bound}.
\end{proof}
 
We emphasize that the bound $(4 \log p) / t$ in \eqref{eq:general-finite-bound} provides a fixed envelope that does not depend on the covariance $\Sigma$ at all.
This lack of dependence may seem unusual. For example, one could choose a large number $N \gg 1$ and apply the bound \eqref{eq:general-finite-bound} to $Z/N$, thus obtaining the same upper bound for the densities of both $Z$ and $Z/N$ at the point $t$. However, this ``free lunch'' is counteracted by the non-uniform nature of the bound: interesting quantiles of $\max_i Z_i/N$ will be very close to $0$, and the bound scales as $t^{-1}$, so the bound increases by the necessary factor $N$ at the corresponding \emph{quantiles} $t/N$ of the law of $Z/N$.

Still, for $t \ge 1$, we obtain an $O(\log p)$ anti-concentration bound with no restriction on $\Sigma$, and the bound improves for large values of $t$. This is already enough to cover many interesting quantiles of $\max_i Z_i / \sigma_{\max}$, as follows by comparison to the standard normal law, since quantiles of the maximum are larger than that of each coordinate. Thus, \eqref{eq:general-finite-bound} controls the density of $\max_i Z_i$ at all quantiles $q \ge \nicefrac{1}{2} + \delta$, up to a universal constant $C_\delta = O(\nicefrac{1}{\delta})$, provided $Z$ satisfies $\mu \vee \sigma_{\max} \ge 1$. 

\begin{corollary}\label{cor:upper-tail-bound}
    In the setting of Proposition \ref{prop:general-finite-bound}, let $q_{r}^M$ be the $r$-quantile of the law of $M$ for some $r > \nicefrac{1}{2}$, let $\mu = \mathbb{E}[M]$, and put $\sigma_{\max} = \max_i \sigma_i$. Then, for all $t \ge q_{r}^M$ and with $C_r = 1/(r - \nicefrac{1}{2})$, \[f(t) \lesssim \frac{C_r \log p}{\sigma_{\max} \vee \mu}.\]
\end{corollary}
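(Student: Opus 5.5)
Since Proposition \ref{prop:general-finite-bound} gives $f(t) \le 4\log p / t$ for every $t>0$, and this is decreasing in $t$, it suffices to show that the quantile satisfies $q_r^M \gtrsim (\sigma_{\max}\vee\mu)/C_r$. Then for $t \ge q_r^M$ we get $f(t) \le 4\log p/q_r^M \lesssim C_r\log p/(\sigma_{\max}\vee\mu)$. So the plan reduces to two separate lower bounds on $q_r^M$: one in terms of $\sigma_{\max}$, one in terms of $\mu$. Since $a\vee b \le a+b$, it is enough to bound $q_r^M$ below by a constant times $C_r^{-1}\sigma_{\max}$ and by a constant times $C_r^{-1}\mu$ separately.

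\emph{Bound via $\sigma_{\max}$.} Let $i^*$ attain $\sigma_{\max}$. Since $M \ge Z_{i^*}$, we have $\mathbb{P}\{M \le s\} \le \Phi(s/\sigma_{\max})$, so $q_r^M \ge \sigma_{\max}\Phi^{-1}(r)$. Because $\Phi^{-1}$ is convex on $[\nicefrac12,1)$, $\Phi^{-1}(r) \ge (r-\nicefrac12)/\varphi(0) = \sqrt{2\pi}\,(r-\nicefrac12)$. Hence $q_r^M \ge \sqrt{2\pi}\,\sigma_{\max}/C_r$.

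\emph{Bound via $\mu$.} This is the step needing care, since $M$ may be far from its mean. I would use Gaussian concentration (Borell--TIS / the Tsirelson--Ibragimov--Sudakov inequality): $M$ is $\sigma_{\max}$-Lipschitz in the underlying standard Gaussian, so $\mathbb{P}\{M \le \mu - s\} \le e^{-s^2/(2\sigma_{\max}^2)}$, and the median satisfies $|\mathrm{med}(M)-\mu| \le c\,\sigma_{\max}$. Since $r > \nicefrac12$, we have $q_r^M \ge \mathrm{med}(M) \ge \mu - c\sigma_{\max}$. Now split into two cases. If $\mu \ge 2c\,\sigma_{\max}$, then $q_r^M \ge \mu/2$. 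Otherwise $\mu \lesssim \sigma_{\max}$, and the $\sigma_{\max}$ bound already gives $q_r^M \gtrsim \sigma_{\max}/C_r \gtrsim \mu/C_r$. In both cases $q_r^M \gtrsim (\sigma_{\max}\vee\mu)/C_r$, using $C_r \ge 2$ to absorb constants. Note that $q_r^M>0$ whenever $\sigma_{\max}>0$, so Proposition \ref{prop:general-finite-bound} applies at every $t \ge q_r^M$.

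The main obstacle is this second step: making sure the quantile, rather than just the mean, is pushed up by $\mu$. I handle it with median–mean comparison under Gaussian concentration, where only the case $\mu \gg \sigma_{\max}$ needs it. An alternative that avoids concentration is the Gaussian Poincaré inequality, $\Var(M)\le\sigma_{\max}^2$, combined with Chebyshev or Cantelli. That route gives $\mathrm{med}(M) \ge \mu - \sigma_{\max}$, which suffices equally well.
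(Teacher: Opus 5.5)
Your proposal is correct and follows essentially the same route as the paper. Both arguments reduce, via the envelope $4\log p/t$, to a lower bound on $q_r^M$, obtain $q_r^M \ge \sigma_{\max}\Phi^{-1}(r)$ by comparison with the largest-variance coordinate, and handle $\mu$ by a case split on $\mu$ versus $\sigma_{\max}$, using the lower-tail Gaussian concentration inequality \eqref{eq:borell-tis} when $\mu$ dominates. The differences are cosmetic: you use a median--mean comparison with a fixed constant and a fixed threshold $\mu \ge 2c\,\sigma_{\max}$, whereas the paper uses the $r$-dependent deviation $\sqrt{2\log(\nicefrac{1}{r})}\,\sigma_{\max}$ and an optimized threshold; both give the same $C_r \asymp (r-\nicefrac{1}{2})^{-1}$ dependence.
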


Before proving Corollary \ref{cor:upper-tail-bound}, we state two convenient facts that will be used repeatedly (see e.g.~\citealp[Appendix A]{chatterjee2014superconcentration}). The first is the standard Gaussian maximal inequality,
\begin{equation}
    \label{eq:maximal} \mu \le \sigma_{\max}\sqrt{2\log p}.\tag{GM}
\end{equation}
The second is the Gaussian concentration inequality of \citet{borell1975brunn} and \citet{cirel1976norms}:
\begin{equation}
    \mathbb{P}\{M \le \mu - u\sigma_{\max}\} \;\le\;e^{-u^2/2}; \qquad \mathbb{P}\{M \ge \mu + u\sigma_{\max}\} \;\le\;e^{-u^2/2}.\tag{GC}\label{eq:borell-tis}
\end{equation}

\begin{proof}[Proof of Corollary \ref{cor:upper-tail-bound}]
By \eqref{eq:general-finite-bound}, it suffices to find $c_{1,r} \sigma_{\max}$ and $c_{2,r}$ such that $c_{1,r} \sigma_{\max}\vee c_{2,r}\mu \le q_{r}^M$. By monotonicity of the maximum, $q_{r}^M \ge \sigma_{\max}\Phi^{-1}(r)$. Thus, we may take $c_{1,r} = \Phi^{-1}(r) \gtrsim r - \nicefrac{1}{2}$.

It remains to work out $c_{2,r}$. Suppose first that $t \mu \le \sigma_{\max}$ for a positive constant $t$ to be specified in the proof. 
  Then $q_{r}^M  \ge \Phi^{-1}(r) t \mu$, and we may choose $c_{2,r} = \Phi^{-1}(r) t$.
  On the other hand, if $t \mu > \sigma_{\max}$, then  \eqref{eq:borell-tis} ensures that 
 \[\mu - \sqrt{2\log(\nicefrac{1}{r})} t \mu \le \mu -  \sigma_{\max}\sqrt{2\log(\nicefrac{1}{r})} \le q_{r}^M.\] Thus we may take $c_{2,r} = 1 - t\sqrt{2\log(\nicefrac{1}{r})}$ in this case. To cover both cases, choosing $t = \{\Phi^{-1}(r) +  \sqrt{2\log(\nicefrac{1}{r})}\}^{-1}$ implies that we may take 
 \[c_{2,r} \ge \Phi^{-1}(r) \Big/ \left\{\Phi^{-1}(r) +  \sqrt{2\log(\nicefrac{1}{r})}\right\} \asymp (r - \nicefrac{1}{2}). \qedhere\]
\end{proof}

\subsection{Uniform density bounds for unsigned maxima}

The density bound of Proposition \ref{prop:general-finite-bound} says, in effect, that the density of the Gaussian maximum $M$ on the positive line can only be large near zero: $f(t) \le (4\log p)/t$ for any $t > 0$.

On the other hand, for the unsigned maximum $M^* = \max_i |Z_i|$, the normalization of $\sigma_{\max} = 1$ automatically limits the extent to which probability can accumulate near zero: coupling $M^*$ to the magnitude of the largest-variance coordinate gives
\[\mathbb{P}\{M^* \le t\} \le \int_{-t}^t\vp(s)\,ds \le t\sqrt{2/\pi}.\] These two observations point towards the following result, which demonstrates that the density of  $M^*$ is uniformly bounded by $\{4\log(2p)\}/\sigma_{\max}$. 

\begin{proposition}\label{prop:unsigned-density-bound}
Let $Z \sim N(0,\Sigma)$ be a centered Gaussian vector in $\mathbb{R}^p$ with $p \ge 3$, let $\sigma_{\max}^2 = \max_{1 \le i \le p} \mathbb{E}[Z_i^2]$, and let $M^* = \max_{1 \le i \le p} |Z_i|$. Then, for any $t \ge 0$,
\begin{equation}
    \lim_{\delta \downarrow 0} \frac{1}{\delta} \mathbb{P}\left\{ t \le M^* < t+ \delta \right\} \le  \frac{4\log(2p)}{\sigma_{\max} \vee t}.\label{eq:unsigned-density-bound}
\end{equation}
\end{proposition}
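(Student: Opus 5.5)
Write $M^* = \max_{1\le i\le 2p} W_i$ with $W = (Z,-Z) \sim N(0,\Sigma')$ a centered Gaussian vector in $\mathbb{R}^{2p}$ whose largest marginal standard deviation is still $\sigma_{\max}$. Applying Proposition \ref{prop:general-finite-bound} to $W$ (dimension $2p \ge 6$) gives, for every $t>0$,
\[
\lim_{\delta\downarrow 0}\frac{1}{\delta}\mathbb{P}\{t \le M^* < t+\delta\} \le \frac{4\log(2p)}{t}.
\]
When $t \ge \sigma_{\max}$ this is exactly \eqref{eq:unsigned-density-bound}. It remains to treat $0 \le t < \sigma_{\max}$, where we need the bound $4\log(2p)/\sigma_{\max}$. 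By homogeneity, rescaling $Z \mapsto Z/\sigma_{\max}$ (the density scales by $\sigma_{\max}$ and $t$ by $1/\sigma_{\max}$), it suffices to assume $\sigma_{\max}=1$ and $t \in [0,1)$.

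For small $t$ the envelope $4\log(2p)/t$ from the reduction is too weak, so we go back into the proof of Proposition \ref{prop:general-finite-bound}. The plan is to use the master inequality \eqref{eq:master-sketch} applied to $W$ rather than its optimized form:
\[
f^*(t) \le \frac{u+t}{u^2} + \sum_{0<\sigma_i<u} \frac{2}{\sigma_i}\vp\!\left(\frac{t}{\sigma_i}\right),
\]
where the factor $2$ reflects that each coordinate appears twice in $W$. Here we make a different choice of the threshold $u$. The key fact to exploit is that $\{M^*\le t\}$ is a symmetric convex set contained in the slab $\{|Z_{i_0}| \le t\}$, where $\sigma_{i_0}=1$. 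With this choice, the term $(u+t)/u^2$ is $O(1)$ when $u \asymp 1$.

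The main obstacle is controlling the sum over small-variance coordinates when $t$ is small, since $\sigma_i^{-1}\vp(t/\sigma_i)$ is unbounded as $\sigma_i \to 0$ with $t \to 0$. I would handle this by a monotonicity argument rather than the raw sum. Because $\mathbb{P}\{M^* \le s\}$ is the Gaussian measure of a dilating symmetric convex body $sK$, the function $s \mapsto \log \mathbb{P}\{M^*\le s\}$ is concave (log-concavity of Gaussian measure), and in fact $s \mapsto \mathbb{P}\{M^* \le s\}$ has a density $f^*(s)$ for which $s f^*(s)/F^*(s)$ is nonincreasing, by the Gaussian dilation (S-inequality / Latała–Oleszkiewicz type) property of symmetric convex sets. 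Granting this, for $t<1$ we get
\[
f^*(t) \le \frac{F^*(t)}{t}\cdot\frac{f^*(1)}{F^*(1)}\cdot 1 \cdot \frac{1}{1} \le \frac{F^*(t)}{t\,F^*(1)}\, f^*(1).
\]
We then bound $f^*(1) \le 4\log(2p)$ from the first step, $F^*(t) \le t\sqrt{2/\pi}$ from the coupling to the top coordinate, and $F^*(1)$ from below. If the lower bound on $F^*(1)$ fails to be constant, I would instead compare $f^*(t)$ with $f^*(s)$ at $s = 1$ using the concavity of $\log F^*$ directly:
\[
f^*(t)/F^*(t) \le \frac{\log F^*(1) - \log F^*(t)}{1-t},
\]
combined with $F^*(t)\le t$, which yields at worst $O(\log(1/t))$-type control. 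This last step is the delicate one, and I expect the constant $4$ to require using the dilation monotonicity rather than plain log-concavity.
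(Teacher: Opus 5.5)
\textbf{There is a genuine gap.} Your reduction for $t \ge \sigma_{\max}$, applying Proposition~\ref{prop:general-finite-bound} to $(Z,-Z)$ in dimension $2p$, is correct and is exactly what the paper does. The case $t<\sigma_{\max}$ fails because both monotonicity inequalities you invoke are used in the wrong direction.

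The dilation property you describe is the B-inequality of Cordero-Erausquin, Fradelizi and Maurey ($x \mapsto \log\gamma(e^xK)$ is concave), not the S-inequality. It says that $g(s) = s f^*(s)/F^*(s)$ is non-increasing. For $t<1$ it therefore gives $t f^*(t)/F^*(t) \ge f^*(1)/F^*(1)$, which is a \emph{lower} bound on $f^*(t)$. Likewise, concavity of $\log F^*$ gives $f^*(t)/F^*(t) \ge \{\log F^*(1)-\log F^*(t)\}/(1-t)$, the reverse of your fallback. Comparing with a point to the right can never upper-bound the derivative of a concave function at $t$. You would need a point $t_0<t$, and hence a lower bound on the small-ball probability $F^*(t_0)$, which is not available in scale-free form.

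The route is also lossy for structural reasons, even with the direction fixed:
\begin{itemize}
\item As $t\downarrow 0$, $g(t)$ tends to the rank of $\Sigma$ (it is $\approx p$ for $\Sigma=I_p$). So no bound on $g$ combined with $F^*(t)\le t\sqrt{2/\pi}$ can give better than order $p$.
\item $F^*(1)$ can be as small as $(2\Phi(1)-1)^p$, so dividing by it is fatal.
\item Your master-inequality attempt with $u\asymp 1$ stops exactly where it fails: that bound never sees the slab $\{|Z_1|\le t\}$.
\end{itemize}

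The missing idea is to use the slab quantitatively by conditioning on the top coordinate. With $\sigma_1 = 1$, write $Z_i = \beta_{1i}Z_1 + Z_i^\perp$ with $Z^\perp$ independent of $Z_1$, and substitute $z_1 = tu$:
\[ F^*(t) = \int_{-1}^1 t\varphi(tu)\,H_u(t)\,du. \]
Here $H_u$ is the distribution function of the centered signed maximum of the $2p-2$ variables $\tau Z_i^\perp/(1-\tau\beta_{1i}u)$, $\tau=\pm1$; this is valid since $|\beta_{1i}u|<1$. Differentiate in $t$:
\begin{itemize}
\item The term from $\frac{d}{dt}\{t\varphi(tu)\}=(1-t^2u^2)\varphi(tu)$ contributes at most $\sqrt{2/\pi}$.
\item The other term is $\int_{-1}^1\varphi(tu)\,\{t h_u(t)\}\,du$, where $t h_u(t)\le 4\log(2p-2)$ by Proposition~\ref{prop:general-finite-bound}.
\end{itemize}
The width $2t$ of the slab cancels the $1/t$ in the scale-free bound. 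This yields a bound of order $\log(2p)$ uniformly in $t\in(0,1)$, which is the paper's proof.
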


\begin{proof}
    Note that we can represent $M^*$ as the maximum of $2p$ jointly Gaussian coordinates. Thus, the case $t \ge \sigma_{\max}$ is immediate by Proposition \ref{prop:general-finite-bound}. We now handle the case of $t < \sigma_{\max}$. 
    
     Without loss of generality, we rescale $Z$ and rearrange coordinates so that $\mathbb{E}[Z_1^2] = \sigma_{\max}^2 = 1$. We begin by defining coefficients $\beta_{1i} = \mathbb{E}[Z_1Z_i]$ and linear residuals $Z^{\perp}_i = Z_i - \beta_{1i}Z_1$ for $1 \le i \le p$ (so $|\beta_{1i}| \le 1$,  $Z^\perp$ is centered, and  $Z_1^\perp = 0$). By construction, $Z_1$ and $Z^\perp$ are independent, giving
    \[\mathbb{P}\{M^* \le t\} = \int_{-t}^t \vp(z_1)\mathbb{P}\left\{\max_{2 \le i \le p} |\beta_{1i}z_1 + Z_i^\perp| \le t
    \right\}\,dz_1.\]
    The key step is to note that the inner probability can be represented as a signed, centered maximum in dimension $2p-2$. Indeed, a simple rearrangement shows \[|\beta_{1i}z_1 + Z_i^\perp| \le t \iff \max\left\{\frac{Z_i^\perp}{(1 - \beta_{1i}z_1/t)}, \frac{-Z_i^\perp}{(1 + \beta_{1i}z_1/t)}\right\} \le t.\]  Removing the null event $\{|Z_1|=t\}$ and writing $z_1 = tu$ for $|u| < 1$ gives the representation
    \[\mathbb{P}\{M^* \le t\} =  \int_{-1}^1 t\vp(tu)\mathbb{P}\left\{\max_{2 \le i \le p}\max_{\tau \in \{\pm 1\}} \frac{\tau Z_i^\perp}{(1 -\tau\beta_{1i} u)}\le t
    \right\}\,du.\]
    Write $H_u(t)$ for the inner probability.  By Proposition \ref{prop:general-finite-bound}, $H_u(t)$ is weakly differentiable at any $t > 0$, with derivative $h_u(t)$ bounded by $4 \log(2p-2)/t$. Thus we may differentiate to obtain
    \[\frac{d}{dt}\mathbb{P}\{M^* \le t\} = \int_{-1}^1 (1-t^2u^2)\vp(tu)H_u(t)\,du + t\int_{-1}^1 \vp(tu)h_u(t)\,du.\]
    Since $t$ and $|u|$ are in $(0,1)$, the first integrand is at most $1/\sqrt{2\pi}$, so this term contributes at most $\sqrt{2/\pi}$. For the second term,  \eqref{eq:general-finite-bound} gives $t h_u(t) \le 4\log(2p-2)$ leading to a bound of $\{8\log(2p-2) + 2\}/\sqrt{2\pi}$. When $p \ge 3$, this is always at most $4\log(2p)$; reversing the normalization $\sigma_{\max}=1$ gives \eqref{eq:unsigned-density-bound}.
\end{proof}


\subsection{Bounds on the variance}\label{sec:variance-bounds}

Proposition \ref{prop:general-finite-bound} also implies bounds on the variance of the maximum which depend only upon its expectation and the dimension, eliminating further dependence on $\Sigma$.

\begin{proposition}\label{prop:variance-bound}
    Let $Z \sim N(0,\Sigma)$ be a centered Gaussian vector in $\mathbb{R}^p$ with $p \ge 3$, and let $M = \max_{1 \le i \le p} Z_i$, and let $\mu= \mathbb{E}[M] $ denote its expectation. Then, if $\mu > 0$, we have
\begin{equation}
    \sqrt{\Var(M)} \ge \frac{\mu}{15 \log p}.\label{eq:variance-bound}
\end{equation}
\end{proposition}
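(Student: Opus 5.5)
The plan is to turn the pointwise density envelope of Proposition~\ref{prop:general-finite-bound} into a lower bound on the spread of $M$ around $\mu$. The argument is a ``bathtub'' (rearrangement) argument. Write $K = 4\log p$, so that by \eqref{eq:general-finite-bound} the law of $M$ restricted to $(0,\infty)$ has a density $f$ with $f(t) \le K/t$. Nothing is assumed about the law of $M$ on $(-\infty,0]$, where there may even be an atom at $0$.

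Heuristically, a density of at most $K/t$ means $\log M$ has density at most $K$ on the positive part. Near $t=\mu$ the density of $M$ is therefore at most about $K/\mu$. The most concentrated law compatible with this is roughly uniform on an interval of length $\mu/K$ around $\mu$, whose standard deviation is
\[
\frac{\mu}{\sqrt{12}\,K} = \frac{\mu}{4\sqrt{12}\,\log p} \approx \frac{\mu}{13.9\log p}.
\]
This is slightly better than the claimed $\mu/(15\log p)$, so the slack between $13.9$ and $15$ must absorb the curvature of $K/t$ and the possible mass on $(-\infty,0]$.

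\emph{Step 1: reduction to an extremal problem.} Since $\Var(M) = \mathbb{E}(M-\mu)^2$, it suffices to lower-bound $\int (t-\mu)^2\,d\nu(t)$ over all probability measures $\nu$ whose restriction to $(0,\infty)$ has density at most $K/t$. Any mass on $(-\infty,0]$ sits at distance at least $\mu$ from $\mu$. By the bathtub principle, the minimizer moves all mass as close to $\mu$ as possible. One then checks that placing mass at distance $\ge\mu$ is never better than filling the interval: the interval-only construction will turn out to have all points within distance $O(\mu/K) < \mu$. Hence the minimizer is $\nu^\ast$ with density $K/t$ on an interval $[a,b] \ni \mu$ satisfying $K\log(b/a) = 1$, i.e.\ $b = a e^{1/K}$, with $a$ chosen so that the two endpoints are equidistant from $\mu$ (level-set structure).

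\emph{Step 2: estimating the extremal value.} Rather than computing $\int_a^b (K/t)(t-\mu)^2\,dt$ exactly, I would use the following simple bounds.
\begin{itemize}
\item The endpoints satisfy $b - a = a(e^{1/K} - 1) \ge a/K$ and $a \ge \mu e^{-1/K}$.
\item The density on $[a,b]$ lies between $K/b$ and $K/a$.
\end{itemize}
Comparing $\nu^\ast$ with a uniform law on an interval of length $\ell = 1/(K/a) \cdot (\text{mass})$ then gives a second moment of at least
\[
\frac{\mu^2 e^{-2/K}}{12K^2}\cdot\bigl(1 + O(1/K)\bigr).
\]
Using $p \ge 3$, we have $K \ge 4\log 3 > 4.39$, and these correction factors should be bounded explicitly by at most $(15/13.86)^2 \approx 1.17$. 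This yields \eqref{eq:variance-bound}.

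\emph{Expected obstacle.} The main difficulty is purely the constants: the naive Chebyshev route only gives about $\mu/(21\log p)$, which is why the sharper rearrangement is needed. I also expect the bookkeeping for small $K$ to be tight at $p=3$. If it fails there, the fallback is the crude regime split. If $\sqrt{\Var(M)} \ge \mu/4$, the claim is trivial. Otherwise one can apply the rearrangement bound only on $[\mu/2, 3\mu/2]$, where $K/t \le 2K/\mu$, which loses constants but should still fit within the stated $15$ once $e^{\pm 1/K}$ factors are handled directly rather than linearized.
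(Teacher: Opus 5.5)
Your Step 1 is correct, and it takes a different, slightly leaner route than the paper's. Because $\Var(M)=\mathbb{E}(M-\mu)^2$ and the law of $M$ is feasible, minimizing the second moment about the \emph{fixed} point $\mu$ is a valid relaxation. The minimization runs over all laws whose density on $(0,\infty)$ is at most $K/t$, with no mean constraint. Its level sets give the symmetric interval $[\mu-r,\mu+r]$ with $K\log\frac{\mu+r}{\mu-r}=1$, i.e.\ $r=\mu\tanh(\frac{1}{2K})<\mu$. Mass on $(-\infty,0]$ is excluded automatically, since it costs at least $\mu^2>r^2$. The paper instead keeps the mean as a second constraint. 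It first pushes all mass onto $[0,\infty)$ via $X\mapsto X_+\mathbb{E}[X]/\mathbb{E}[X_+]$. It then solves a two-multiplier bathtub problem whose extremizer is $B/t$ on $[a,b]$ with $B(b-a)=\mu$, giving the sharp value $\mu^2\{\frac{1}{2B}\coth(\frac{1}{2B})-1\}$. Your relaxation avoids the transport step and loses only terms of order $K^{-4}$ relative to that value, which is harmless here.

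The gap is in Step 2. The estimates you propose lose factors of first order in $1/K$, and the constant $15$ cannot absorb them for small $p$. Comparing $\nu^*$ with the uniform law of density $K/a$ gives $\Var(M)\ge a^2/(12K^2)\ge \mu^2e^{-2/K}/(12K^2)$. You would then need $e^{-1/K}\ge 4\sqrt{12}/15\approx 0.924$, which holds only for $p\ge 24$. At $p=3$ the loss factor is $e^{2/K}\approx 1.58$, not at most $1.17$. Even with the exact $a=\mu\{1-\tanh(\frac{1}{2K})\}$ in place of $\mu e^{-1/K}$, this comparison fails for $p\le 5$. The fallback is worse: replacing the envelope by $2K/\mu$ on $[\mu/2,3\mu/2]$ halves the interval length, giving at best $\sqrt{\Var(M)}\ge \mu/(4\sqrt{48}\log p)\approx\mu/(27.7\log p)$.

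The fix is not to avoid the exact computation, which takes one line. With $a=\mu-r$ and $b=\mu+r$,
\[
\int_{\mu-r}^{\mu+r}\frac{K}{t}(t-\mu)^2\,dt
= K\Big[\frac{b^2-a^2}{2}-2\mu(b-a)\Big]+K\mu^2\log\frac{b}{a}
=\mu^2-2K\mu r
=\mu^2\Big\{1-2K\tanh\Big(\frac{1}{2K}\Big)\Big\}.
\]
Using $\tanh x\le x-x^3/3+2x^5/15$ for $x\ge 0$, this is at least $\frac{\mu^2}{12K^2}\big(1-\frac{1}{10K^2}\big)$. For $K=4\log p\ge 4\log 3$ this gives $\sqrt{\Var(M)}\ge \mu/(13.9\log p)$, which proves the claim for all $p\ge 3$.
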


Proposition \ref{prop:variance-bound} complements \citet[Theorem 1.18]{ding2015multiple}, who show the bound $\sqrt{\Var(M)} \gtrsim 1/\mu$ when all coordinate variances are of equal order (see \citet[Theorem 3]{chernozhukov2015comparison} for a related result). Interestingly, here the role of $\mu$ is reversed: whereas in the homogeneous case, $\mu$ must be large in order for $M$ to be very concentrated, here we find that large $\mu$ \emph{limits} concentration of $M$: due to the density envelope of Proposition \ref{prop:general-finite-bound}, large spikes can only lie close to $0$, forcing a variance contribution of order $\mu^2$. The two bounds agree when $\mu \asymp \sqrt{\log p}$.

\begin{proof}[Proof of Proposition \ref{prop:variance-bound}]
 With our scale-free bound \eqref{eq:general-finite-bound} in hand, lower-bounding the variance becomes a straightforward variational problem that can be solved exactly.
\begin{lemma}\label{lem:bathtub}
Let $X$ be a random variable with a density $g$ on $(0, \infty)$, such that (i) for all $t > 0$, $g(t) \le B/t$, and (ii) $\mathbb{E}[X] > 0$. Recall the definition $\coth(x) = (e^{2x} + 1)/(e^{2x}-1)$. Then
\[\Var(X) \ge \mathbb{E}[X]^2\left\{\frac{1}{2B}\coth\left(\frac{1}{2B}\right) - 1 \right\} \ge \frac{\mathbb{E}[X]^2}{13 B^2},\]
where the first inequality is tight, and the last inequality holds whenever $B \ge 1$.
\end{lemma}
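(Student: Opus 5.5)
The plan is to treat this as a linear program over laws of $X$ (a bathtub problem) and certify the lower bound by weak duality: a quadratic test function whose expectation equals $\Var(X)$ for every admissible law. Write $m = \mathbb{E}[X] > 0$. The extremal law should be the density $B/t$ on an interval $[a,b]\subset(0,\infty)$, with $a,b$ chosen so that it has total mass one and mean $m$:
\[
B\log(b/a) = 1, \qquad B(b-a) = m,
\]
so that $b/a = e^{1/B}$ and $a = m/\{B(e^{1/B}-1)\}$. For this law, $\mathbb{E}[X^2] = B(b^2-a^2)/2 = m(a+b)/2$. Hence its variance is
\[
m\,\frac{a+b}{2} - m^2 = m^2\Big\{\frac{e^{1/B}+1}{2B(e^{1/B}-1)} - 1\Big\} = m^2\Big\{\frac{1}{2B}\coth\Big(\frac{1}{2B}\Big)-1\Big\}.
\]
This shows the first inequality is attained, so it is tight.

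For the lower bound I use the certificate directly, which also handles any mass on $(-\infty,0]$, including an atom at zero. For any law with mean $m$,
\[
\Var(X) = \mathbb{E}\big[(X-a)(X-b)\big] + \kappa,
\]
where $\kappa$ is a constant depending only on $a$, $b$ and $m$. Indeed $(X-m)^2 - (X-a)(X-b)$ is affine in $X$, so its expectation depends on the law only through the mean $m$. Since $0<a<b$, the function $(t-a)(t-b)$ is nonnegative outside $(a,b)$, in particular on $(-\infty,0]$. On $(a,b)$ it is negative and $X$ has density $g \le B/t$ there. Therefore
\[
\mathbb{E}\big[(X-a)(X-b)\big] \ge \int_a^b (t-a)(t-b)\,\frac{B}{t}\,dt .
\]
Equality holds for the extremal law, so the right-hand side plus $\kappa$ equals the variance computed above, and this gives the first inequality for every admissible $X$.

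For the last inequality, set $x = 1/(2B) \le 1/2$ when $B\ge 1$. Use the alternating series $x\coth x - 1 = x^2/3 - x^4/45 + \cdots$, whose terms decrease in absolute value for small $x$, so that $x\coth x - 1 \ge x^2/3 - x^4/45$. With $x^2\le 1/4$ this gives at least $x^2(1/3 - 1/180)$. Multiplying out yields $\ge 0.08/B^2 \ge 1/(13B^2)$.

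The main obstacle is bookkeeping in the duality step: identifying $\kappa$ exactly and checking that the integral plus $\kappa$ reproduces the coth expression. Verifying $a>0$ (automatic) and the sign of $(t-a)(t-b)$ off $(a,b)$ is routine.
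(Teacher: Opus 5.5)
Your proof is correct and is essentially the paper's argument: the tilted objective $g_\lambda(t)=t^2-\lambda_2 t-\lambda_1$ in the bathtub principle (Lemma~\ref{lem:bathtub-principle}) is exactly your certificate $(t-a)(t-b)$, and the extremal density $(B/t)\mathbbm{1}\{a\le t\le b\}$ and the $\coth$ computation are the same. The differences are minor streamlinings. Using $(t-a)(t-b)\ge 0$ on $(-\infty,0]$ lets you absorb negative mass and an atom at zero directly, in place of the paper's transport $X\mapsto X_+\mathbb{E}[X]/\mathbb{E}[X_+]$ followed by optimization over the mass $q$ and monotonicity of $\coth$. Your alternating-series estimate (valid for all $0<x<\pi$, since the terms of $x\coth x$ are $(-1)^{n+1}2\zeta(2n)(x/\pi)^{2n}$) replaces the paper's appeal to monotonicity of $u^{-2}(u\coth u-1)$.
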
 

Lemma \ref{lem:bathtub} follows from a short tilting argument (Lemma \ref{lem:bathtub-principle}), after ruling out mass on $(-\infty,0]$. One can show that the minimum variance is attained by a density of the form $g_*(t) = (B/t)\mathbbm{1}\{a \le t \le b\}$, and then optimize over $b > a > 0$ subject to $\int g_*(t)\,dt = 1$. The full proof of the lemma is postponed to Section \ref{sec:uniform-proofs}. Proposition \ref{prop:variance-bound} is then proved by applying Lemma \ref{lem:bathtub} with $X = M$ and $B = 4\log(p)$.
\end{proof}

\subsubsection*{Variance-density bounds} One may consider lifting Proposition \ref{prop:variance-bound} into a uniform density bound using a statement of the following form.
\begin{assumption}\label{assn:kappa} The law of 
$M = \max_{1 \le i \le p} Z_i$ is absolutely continuous, and its density satisfies $\sup_{t \in \mathbb{R}} f(t) \le \sqrt{\kappa / \Var(M)}$. 
\end{assumption}
A variance-density principle of this form was recently proposed \cite[Theorem 2]{giessing2023anti}. However, as stated, it requires additional
qualification for any (even dimension-dependent) $\kappa$: take
 $Z \sim N(0, \Sigma)$ with $\Sigma = \operatorname{diag}(\delta,1, \ldots, 1)$ in any dimension $p \ge 2$.
 As $\delta \downarrow 0$, both mean and variance of $\max_i Z_i$ converge to positive constants, while the maximum density diverges as $\delta^{-\frac 1 2}$.\footnote{By our Proposition \ref{prop:general-finite-bound}, such a density spike can only occur in a neighborhood of $0$.}



\begin{remark}[Unsigned maxima]\label{rmk:unsigned}For unsigned maxima, $M^* = \max_i|Z_i|$, one may take $Z \sim N(0,\Sigma)$ for
 $\Sigma = \operatorname{diag}\{4, 1/(\log p), \ldots, 1/(\log p)\}$ to obtain $\sup_t f^*(t) \asymp \log p$ with $\sqrt{\Var(M^*)}$ of constant order. Up to constants, this is the largest possible gap: by integration of its tails, the Gaussian concentration inequality \eqref{eq:borell-tis} forces $\sqrt{\Var(M^*)} \lesssim \sigma_{\max}$, while our density bound \eqref{eq:unsigned-density-bound} forces $\sup_{t} f^*(t) \lesssim \sigma_{\max}^{-1}\log(p)$, implying $\sup_{t} f^*(t) \lesssim \log(p)/\sqrt{\Var(M^*)}$. 
 \end{remark}

\subsection{Anti-concentration under degeneracy}\label{sec:extensions-uniform}
 Propositions \ref{prop:general-finite-bound} and \ref{prop:unsigned-density-bound} imply anti-concentration bounds which have the heuristic scale $(\log p)/\mu$.
The following Corollary \ref{cor:finite-window} differs from standard anti-concentration bounds in its dependence on the global size of $Z$ via $\mu$ and $\sigma_{\mathrm{max}}$, as opposed to local size via $\min_i \sigma_i$. This makes it applicable to degenerate vectors and processes. 

\begin{corollary}\label{cor:finite-window}
Let $Z \sim N(0,\Sigma)$ be a non-trivial, centered Gaussian vector in $\mathbb{R}^p$ with $p \ge 3$, let $M = \max_{1 \le i \le p} Z_i$, and write $\mu = \mathbb{E}[M]$ and $\sigma_{\mathrm{max}}^2 = \max_{1 \le i \le p}\mathbb{E}[Z_i^2]$. Suppose that $\mu > 0$. Then, for every $t \in \mathbb{R}$ and every $\varepsilon \ge \sigma_{\mathrm{max}}\exp\{-\mu^2/(8\sigma_{\mathrm{max}}^2)\}$,
\begin{equation}\label{eq:finite-window-eps}
\mathbb{P}\{t \le M \le t+\varepsilon\} \;\lesssim\; \frac{\varepsilon\log p}{\mu}.
\end{equation}
Moreover, writing $M^* = \max_{1 \le i \le p }|Z_i|$ and $\mu^* = \mathbb{E}[M^*]$, we have for all $0< \varepsilon < \mu^*/e$,
\begin{equation}\label{eq:finite-window-unsigned}
\mathbb{P}\{t \le M^* \le t+\varepsilon\} \lesssim \sqrt{\log(\nicefrac{\mu^*}{\varepsilon})}\left\{\frac{\varepsilon \log (2p)}{\mu^*}\right\}.
\end{equation}
\end{corollary}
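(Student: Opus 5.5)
The plan is to integrate the non-uniform envelopes of Propositions \ref{prop:general-finite-bound} and \ref{prop:unsigned-density-bound} over the window. Wherever the window sits near zero, where the envelope is not useful, I will instead use the lower-tail Gaussian concentration inequality \eqref{eq:borell-tis}. In both parts I split the window $[t,t+\varepsilon]$ at the point $\mu/2$ (resp.\ $\mu^*/2$).

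\emph{Signed maximum.} The part of the window lying in $[\mu/2,\infty)$ is handled by Proposition \ref{prop:general-finite-bound}. On that part the density exists and is at most $4\log p/s \le 8\log p/\mu$, so its probability is at most $8\varepsilon\log p/\mu$. The part lying in $(-\infty,\mu/2)$, including any atom at zero, has probability at most $\mathbb{P}\{M\le \mu/2\}$. Applying \eqref{eq:borell-tis} with $u=\mu/(2\sigma_{\max})$ bounds this by $\exp\{-\mu^2/(8\sigma_{\max}^2)\}$, which is at most $\varepsilon/\sigma_{\max}$ by the hypothesis on $\varepsilon$. It remains to show $1/\sigma_{\max}\lesssim \log p/\mu$. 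This follows from \eqref{eq:maximal}: $1/\sigma_{\max}\le \sqrt{2\log p}/\mu\le 2\log p/\mu$ since $p\ge 3$. Summing the two pieces gives \eqref{eq:finite-window-eps}.

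\emph{Unsigned maximum.} Set $L=\log(\mu^*/\varepsilon)$; the condition $\varepsilon<\mu^*/e$ guarantees $L\ge 1$. The portion of the window in $[\mu^*/2,\infty)$ is again handled by the envelope $4\log(2p)/t$ of \eqref{eq:unsigned-density-bound}, which gives $8\varepsilon\log(2p)/\mu^*$. For the portion below $\mu^*/2$ I split into two cases according to the size of $\sigma_{\max}$.
\begin{itemize}
\item If $\sigma_{\max}\ge \mu^*/\sqrt{8L}$, use the uniform part of \eqref{eq:unsigned-density-bound}, namely the density bound $4\log(2p)/\sigma_{\max}\le 4\sqrt{8L}\,\log(2p)/\mu^*$. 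Integrating over a window of length $\varepsilon$ gives exactly the form \eqref{eq:finite-window-unsigned}.
\item If $\sigma_{\max}<\mu^*/\sqrt{8L}$, apply \eqref{eq:borell-tis} to $M^*$, viewed as a maximum of $2p$ centered Gaussians with the same $\sigma_{\max}$. This gives $\mathbb{P}\{M^*\le\mu^*/2\}\le \exp\{-(\mu^*)^2/(8\sigma_{\max}^2)\}<e^{-L}=\varepsilon/\mu^*$, which is dominated by the claimed bound.
\end{itemize}

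The main point of care is calibrating the two unsigned cases. The threshold $\mu^*/\sqrt{8L}$ is chosen so that the concentration bound yields exactly $\varepsilon/\mu^*$ in the small-$\sigma_{\max}$ case. This is what produces the $\sqrt{\log(\mu^*/\varepsilon)}$ factor, instead of the cruder $\sqrt{\log 2p}$ that would follow from comparing $\sigma_{\max}$ to $\mu^*$ via \eqref{eq:maximal}. In the signed case, by contrast, the ratio $1/\sigma_{\max}$ versus $\log p/\mu$ is absorbed directly by \eqref{eq:maximal}, so no extra factor appears.
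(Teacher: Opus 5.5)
Your proposal is correct and follows essentially the same route as the paper. The signed bound splits the window at $\mu/2$ and combines \eqref{eq:general-finite-bound}, \eqref{eq:borell-tis} and \eqref{eq:maximal} exactly as you do. The unsigned bound uses the same case split on $\sigma_{\max}$ versus $\mu^*/\sqrt{\log(\mu^*/\varepsilon)}$; the paper's threshold is $\mu^* \le 8\sigma_{\max}\sqrt{\log(\mu^*/\varepsilon)}$ rather than your $\sqrt{8\log(\mu^*/\varepsilon)}$, which only changes constants.
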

\begin{remark}[Tightness]\label{rmk:tight} The dependence on $p$ in \eqref{eq:finite-window-eps} is optimal. For $Z\sim N(0,I_p)$ one has $\mu = \mathbb{E}[M] \asymp \sqrt{\log p}$ and $\sup_t f(t) \asymp \sqrt{\log p}$ \cite{chatterjee2014superconcentration}. Since the bound is of order $\log p/\mu \asymp \sqrt{\log p}$, and for such non-degenerate vectors the finite-window anti-concentration in \eqref{eq:finite-window-eps} is of the same order as $\sup_t f$, the bound is attained. Rescaling $Z$ by the factor $(\log p)^{-\frac 1 2}$ gives $\mathbb{E}[M] \asymp 1$ and $\sup_t f(t) \asymp \log p$, matching the resulting order $\log p$. These examples also show that $\eqref{eq:finite-window-unsigned}$ is sharp up to the logarithmic factor.
\end{remark}

\begin{remark}[Lower restriction on $\varepsilon$] 
 The anti-concentration bound \eqref{eq:finite-window-eps} for signed maxima $M$ requires a minimum window size of $\varepsilon \ge \sigma_{\mathrm{max}}\exp\{-\mu^2/(8\sigma_{\mathrm{max}}^2)\}$. By adding an independent coordinate with variance $\delta \downarrow 0$ to both constructions above, one observes that the lower restriction on $\varepsilon$ is necessary in general, as the infinitesimal window $(0,\sqrt{\delta})$ carries constant mass of order $2^{-p}$. This restriction is no longer needed for unsigned maxima $M^*$.
\end{remark}

\begin{remark}[Comparisons] When all marginal standard deviations are equal to $\sigma$, \citet{chernozhukov2015comparison} give an anti-concentration bound of order $\sigma^{-1}(1+\mu/\sigma)$, which is sharp in its dependence on $p$ for independent coordinates. In the regime $\mu \asymp \sigma_{\max}\sqrt{\log p}$, the bound \eqref{eq:finite-window-eps} is of order $\sqrt{\log p}/\sigma_{\max}$, matching this benchmark while allowing $\min_i\sigma_i/\sigma_{\max}$ to vanish. The improved anti-concentration bound of \citet{deng2020beyond} is of order $(\min_i \sigma_i)^{-1}$ in its most favorable cases (roughly, when $\sigma_i \gtrsim \sigma_p\sqrt{\log\{p-i\}}$ for each $1 \le i < p$). Relative to this rate, the coefficient in \eqref{eq:finite-window-eps} differs by the factor $(\min_i\sigma_i)\log p/\mu$. Thus, our bound is sharper when this ratio is small and can be sharper by an arbitrarily large factor when it tends to zero. Conversely, a loss of order $\sqrt{\log p}$ occurs when $\mu \asymp (\min_i\sigma_i)\sqrt{\log p}$. A major difference is that \citet{deng2020beyond} also consider non-centered vectors (see also \citet{chernozhukov2016empirical}). Another difference is the implicit condition $\mu \gg \sigma_{\max}$, which is unavoidable for degenerate vectors due to possible mass at $0$, and which is eliminated by \eqref{eq:finite-window-unsigned} for unsigned maxima.
\end{remark}
\begin{proof}
Fix $t \in \mathbb{R}$ and $\varepsilon > 0$. For any $a \in [0,\mu)$, splitting the window $[t,t+\varepsilon]$ at $a$ gives
\begin{equation}\label{eq:finite-window-split}
\mathbb{P}\{t \le M \le t+\varepsilon\} \;\le\; \mathbb{P}\{M \le a\} + \int_{t \mathrel{\vee} a}^{t + \varepsilon} f(s)\,ds \le \mathbb{P}\{M \le a\} + \frac{4\varepsilon\log\,p}{a},
\end{equation}
by our density bound \eqref{eq:general-finite-bound}. For \eqref{eq:finite-window-eps}, take $a = \mu/2$. The first term is bounded by \eqref{eq:borell-tis} as
\begin{equation*}
\mathbb{P}\{M \le a\} \le e^{-(\mu-a)^2/2\sigma_{\max}^2}= e^{-\mu^2/{8\sigma_{\mathrm{max}}^2}} \le \varepsilon/\sigma_{\max},
\end{equation*}
where the last step follows by our lower restriction on $\varepsilon$. Consolidating the two terms in \eqref{eq:finite-window-split}, and using \eqref{eq:maximal} to deduce that $\varepsilon/\sigma_{\max} \le \sqrt{2\log p}(\varepsilon / \mu)$, yields \eqref{eq:finite-window-eps}.

For \eqref{eq:finite-window-unsigned}, note that $M^*$ is the \emph{signed} maximum of a Gaussian vector in dimension $2p$, so that \eqref{eq:finite-window-split} holds with $M$ and $\log p$ replaced by $M^*$ and $\log(2p)$. Now let $\varepsilon \in (0,\mu^*/e)$ be given. We consider cases. In the first case, suppose that $\mu^* \le 8\sigma_{\max}\sqrt{\log(\nicefrac{\mu^*}{\varepsilon})}$. In this case, we use \eqref{eq:unsigned-density-bound} directly:
\[\int_{t}^{t + \varepsilon}f^*(s)\,ds \le \frac{4\varepsilon\log(2p)}{\sigma_{\max}} \le \varepsilon\sqrt{\log(\nicefrac{\mu^*}{\varepsilon})}\left\{\frac{32\log(2p)}{\mu^*}\right\}.\]
On the other hand, if $\mu^* > 8\sigma_{\max}\sqrt{\log(\nicefrac{\mu^*}{\varepsilon})}$ then choose $a = \mu^*/2$; \eqref{eq:borell-tis} then gives $\mathbb{P}\{M^* \le a\} \le \varepsilon/\mu^*$. Using this bound in \eqref{eq:finite-window-split} gives
\[\mathbb{P}\{t \le M^* \le t+\varepsilon\} \le \frac{\varepsilon}{\mu^*} + \frac{8\varepsilon\log\,(2p)}{\mu^*}.\]
Consolidating the two bounds gives \eqref{eq:finite-window-unsigned}.
\end{proof}

\subsection{Weak variance decay}\label{sec:weak-var-decay}
We now consider the behavior of our bounds under further structural assumptions, particularly the weak variance decay considered by \citet{lopes2020bootstrapping}.
Without loss of generality, suppose that the standard deviations $\{\sigma_q\}_{1 \le q \le p}$ are given in decreasing order. 
The bounds of Section \ref{sec:nonuniform-bound} admit the following extension, implying a dimension-independent bound when $\sigma_q = o(1/\sqrt{\log q})$.

\begin{proposition}\label{prop:weak-var-decay}
    For $u > 0$, let $S_u = \{q: \sigma_q \ge u\}$ be the set of coordinate indices whose standard deviations exceed $u$ and $S_u^c$ be the complementary set. We then have
    \begin{equation}\label{eq:weak}
        f(t) \lesssim \inf_{u > 0}\left\{\frac{\sqrt{\log (\#S_u \vee 2)}}{u}  + \sum_{q \in S_u^c} \frac{1}{\sigma_q} \exp\left(\frac{-t^2}{2\sigma_q^2}\right)\right\}.
    \end{equation}
\end{proposition}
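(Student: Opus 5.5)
Our plan is to return to the Nazarov-type argument behind Proposition \ref{prop:general-finite-bound}, and in particular to the master inequality \eqref{eq:master-sketch}. That inequality already splits the density at $t$ into two parts. The first part comes from the high-variance coordinates $\{\sigma_i \ge u\}$ and is bounded by Nazarov's crude term $(u+t)/u^2$. The second part is an explicit sum over the low-variance coordinates, $\sum_{0<\sigma_i<u}\sigma_i^{-1}\vp(t/\sigma_i)$. This second part is exactly the sum in \eqref{eq:weak}, up to the constant $(2\pi)^{-1/2}$.

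The work is therefore in the first part. We want to show that the high-variance facets contribute at most $\lesssim \sqrt{\log(\#S_u\vee 2)}/u$. This is the standard surface-area bound for the intersection of $\#S_u$ half-spaces, each at distance at least $t/\sigma_i$ from the origin, once coordinates are normalized by a common scale $u$.

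Concretely, we write
\[
\{M\le t\} \;=\; \{\max_{i\in S_u} Z_i\le t\}\cap\{\max_{i\in S_u^c} Z_i\le t\}.
\]
The derivative of $\mathbb{P}\{M\le t\}$ in $t$ is bounded by the sum of the boundary measures of the two pieces, since the derivative of an intersection is dominated by the sum of the facet densities. We then proceed in three steps.

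\begin{enumerate}
\item For the $S_u^c$ facets, we bound each facet's contribution by the marginal density of $Z_i$ at $t$, namely $\sigma_i^{-1}\vp(t/\sigma_i)$. This yields the sum term.
\item For $S_u$, we apply Nazarov's inequality in the form of Chernozhukov--Chetverikov--Kato. The vector $(Z_i/\sigma_i)_{i\in S_u}$ has unit variances, and the event is $\{Z_i/\sigma_i\le t/\sigma_i\ \forall i\}$. Perturbing the thresholds $t/\sigma_i$ by $\delta/\sigma_i\le\delta/u$ changes the probability by at most $C(\delta/u)\sqrt{\log(\#S_u\vee 2)}$. This gives the first term.
\item We take the infimum over $u$. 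Since $u$ is a free parameter in the decomposition, this produces \eqref{eq:weak}.
\end{enumerate}

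The main obstacle is step 2. The version of \eqref{eq:master-sketch} sketched earlier gives $(u+t)/u^2$ rather than $\sqrt{\log \#S_u}/u$, so we cannot just quote Proposition \ref{prop:general-finite-bound}. We must instead make sure the facet decomposition of Nazarov's argument can be restricted to the subfamily $S_u$ while keeping the $\sqrt{\log}$ dependence on the number of facets, not on $p$.

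Our first attempt will be the simple inclusion bound
\[
\mathbb{P}\{t\le M<t+\delta\}\le \mathbb{P}\Big\{t\le \max_{S_u}Z_i<t+\delta\Big\}+\sum_{i\in S_u^c}\mathbb{P}\{t\le Z_i<t+\delta\}.
\]
Dividing by $\delta$ and applying the non-uniform, unequal-variance Nazarov bound (density $\lesssim \sqrt{\log n}/\min\sigma$) to the first term sidesteps the geometry entirely. If $\#S_u=1$, the first term is just the density of a single Gaussian, which is at most $1/u$, consistent with the $\vee 2$ convention.
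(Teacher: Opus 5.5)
Your proof is correct, and the inclusion bound you call a ``first attempt'' is already a complete argument. It takes a different route from the paper's. Splitting according to the coordinate that attains the maximum gives $\{t\le M<t+\delta\}\subset\{t\le \max_{i\in S_u}Z_i<t+\delta\}\cup\bigcup_{i\in S_u^c}\{t\le Z_i<t+\delta\}$.

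The Chernozhukov--Chetverikov--Kato form of Nazarov's inequality needs only $\mathbb{E}[Z_i^2]\ge u^2$ on the sub-vector, and it allows singular covariance and arbitrary thresholds. It bounds the first piece by $\delta\{\sqrt{2\log \#S_u}+2\}/u$. The marginal densities handle the rest, so $f(t)\le\{\sqrt{2\log(\#S_u\vee 1)}+2\}/u+\sum_{q\in S_u^c}\sigma_q^{-1}\vp(t/\sigma_q)$, with explicit constants and no case analysis. The geometric concern you raise, about restricting Nazarov's facet decomposition to $S_u$, therefore never has to be resolved.

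The paper instead stays inside its master bound (Proposition~\ref{prop:master-bound}). Its first term $(v+t)/v^2$ does not depend on how many high-variance facets there are. The factor $\sqrt{\log k}$, with $k=\#S_u\vee 2$, comes from the choice of inner threshold: $v=u$ when $t\le u\sqrt{2\log k}$, and $v=t/\sqrt{2\log k}$ otherwise. In the second case, the at most $k$ coordinates with $u\le\sigma_i<v$ are absorbed using $\vp(\sqrt{2\log k})=1/(\sqrt{2\pi}\,k)$.

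Your route is shorter and shows that centering matters only for the tail sum: a centered coordinate with small variance has tiny density at $t>0$. The price is importing the full Nazarov inequality as a black box. The paper's route is self-contained, using only the facet estimates from Nazarov's proof (Lemmas~\ref{lem:disjoint}--\ref{lem:basic-surface-area-bounds}). It also handles this result and Proposition~\ref{prop:general-finite-bound} by a single mechanism.
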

\begin{proof}
See Section \ref{sec:proof-prop1}. \let\qed\relax
\end{proof}
Proposition \ref{prop:weak-var-decay} recovers the following sharp boundary in the behavior of the density under weak variance decay. The result is based upon convergence of the sum appearing in \eqref{eq:weak}.
\begin{example}[Boundedness under weak variance decay]
    Suppose that for some $\alpha > 0$, the vector $Z \in \mathbb{R}^p$ satisfies (after a possible rearrangement of the coordinates):
    \begin{equation}
        \sigma_k \le \frac{t}{\sqrt{2\alpha\log k}} \label{eq:very-weak-variance-decay}.
    \end{equation}
     If $\alpha > 1$, then $f(t)$ is bounded by an envelope which depends only upon $\alpha$ (and not $p$). 
    
     On the other hand, for any $p$ and any $t > 0$, we may take $Z$ to be a Gaussian with variance $t^2(2\log p)^{-1}I_p$. This construction satisfies \eqref{eq:very-weak-variance-decay} with the precise constant $\alpha = 1$, and it has $f(t) \asymp \sqrt{\log p}/t$ for each $p$, so that the density is unbounded as $p$ grows. In particular, this construction has $\mu \sim t$ as $p \uparrow \infty$, and $M - \mu = O_p(t/\log p)$ \cite[Appendix A]{chatterjee2014superconcentration}.

     Thus, \eqref{eq:weak} captures the threshold for uniform boundedness of the density under this variance profile, up to the sharp constant $\alpha = 1$ in the regime $\sigma_k \asymp (2\alpha\log k)^{-\frac 1 2}$.
\end{example}

\subsection{Density at extremal quantiles}\label{sec:extremal-quantiles}

Arguments based on concavity of the Gaussian law can provide a more complete picture of the density at extremal points. As usual, we let $f$ denote the density of $M = \max_i Z_i$ on $(0,\infty)$ and we let $F(t) = \mathbb{P}\{M \le t\}$ be its cumulative distribution function. 

\begin{lemma}\label{prop:extremal-quantile}
    Let $Z \sim N(0,\Sigma)$ be a centered Gaussian vector in $\mathbb{R}^p$, let $M = \max_{1 \le i \le p} Z_i$ with density $f$ on $(0,\infty)$, and let $m$ be a median of $M$. Then, whenever $m > 0$, $f$ is non-increasing on $[m,\infty)$, and for all $h \ge 0$,
    \begin{equation}\label{eq:extremal-density-bound}
        f(m + h\sigma_{\mathrm{max}}) \le \sqrt{2\pi}\, f(m)\, \vp(h) = f(m)\, e^{-h^2/2},
    \end{equation}
    where $\vp$ is the standard Gaussian density. Thus, the non-uniform bound \eqref{eq:general-finite-bound} gives the envelope
    \[
     \frac{4\log p}{m}\, e^{-h^2/2} \quad (h \ge 0).
    \]
\end{lemma}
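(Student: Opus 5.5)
The plan is to work with the Gaussian quantile transform of the distribution function, $G(t) = \Phi^{-1}(F(t))$, and to use two facts from Gaussian geometry: Ehrhard's inequality, which will make $G$ concave, and Gaussian isoperimetry, which will make $G$ grow at least linearly at rate $1/\sigma_{\max}$ above the median.

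First, as in the proof sketch of Proposition \ref{prop:general-finite-bound}, I would write $F(t) = \gamma_p(P_t)$, where $\gamma_p$ is the standard Gaussian measure on $\mathbb{R}^p$ and
\[
P_t=\{u \in \mathbb{R}^p: \max_i (\Sigma^{1/2}u)_i \le t\}.
\]
Each $P_t$ is convex. Moreover, for $\lambda \in [0,1]$ and $s,t$ real, linearity of $u \mapsto \Sigma^{1/2}u$ gives the inclusion
\[
\lambda P_s + (1-\lambda) P_t \subseteq P_{\lambda s + (1-\lambda)t}.
\]
Ehrhard's inequality then gives
\[
\Phi^{-1}\big(F(\lambda s+(1-\lambda)t)\big) \ge \lambda\, \Phi^{-1}(F(s)) + (1-\lambda)\,\Phi^{-1}(F(t)).
\]
So $G$ is concave wherever it is finite. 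This holds on $(0,\infty)$, since $F(t)>0$ there: $P_t$ contains a neighbourhood of the origin.

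A concave $G$ has a right derivative $G'_+$ at every point, and $G'_+$ is non-increasing. By the chain rule, and using that the density in Proposition \ref{prop:general-finite-bound} is defined as a right limit,
\[
f(t) = \vp\big(G(t)\big)\, G'_+(t).
\]
Since $m$ is a median, $G(m) \ge 0$. For $t \ge m$, $G(t)$ is non-negative and non-decreasing, so $\vp(G(t))$ is non-increasing. Hence $f$ is a product of two non-negative non-increasing functions on $[m,\infty)$, which proves the first claim. At $t=m$ this also gives $G'_+(m) \le \sqrt{2\pi}\, f(m)$, with equality when $F(m) = 1/2$ exactly.

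Second, I would show that $G(m+h\sigma_{\max}) \ge h$ for $h \ge 0$. If $u \in P_m$ and $\|v\|_2 \le h$, then by Cauchy--Schwarz
\[
(\Sigma^{1/2}(u+v))_i \le m + \sigma_i h \le m + h\sigma_{\max}.
\]
Therefore $P_m + hB_2 \subseteq P_{m+h\sigma_{\max}}$, where $B_2$ is the Euclidean unit ball. The Gaussian isoperimetric inequality (Borell / Sudakov--Tsirelson) then gives
\[
F(m+h\sigma_{\max}) \ge \Phi\big(\Phi^{-1}(F(m)) + h\big) \ge \Phi(h).
\]
Combining the two steps, for $h \ge 0$,
\[
f(m+h\sigma_{\max}) = \vp\big(G(m+h\sigma_{\max})\big)\, G'_+(m+h\sigma_{\max}) \le \vp(h)\, G'_+(m) \le \sqrt{2\pi}\,\vp(h)\, f(m),
\]
which is \eqref{eq:extremal-density-bound}. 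The envelope then follows by inserting $f(m) \le 4\log p/m$ from \eqref{eq:general-finite-bound}.

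The main obstacle is justifying Ehrhard's inequality in this form. The sets $P_t$ may be unbounded, or degenerate when $\Sigma$ is singular, and one must make sure the one-sided derivatives of $G$ match the paper's definition of $f$ as a right limit. I would handle this by noting that Ehrhard's inequality holds for arbitrary Borel (in particular convex) sets. On $(0,\infty)$ the function $G$ is finite and concave, so it is locally Lipschitz, which justifies the chain-rule identity for right derivatives. Near-median issues such as atoms cannot occur above $0$, because $F$ is continuous on $(0,\infty)$ by Proposition \ref{prop:general-finite-bound}.
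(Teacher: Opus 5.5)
Your argument follows the paper's proof: Ehrhard concavity of $G=\Phi^{-1}\circ F$, the slope-one bound $G(m+h\sigma_{\max})\ge h$ from Gaussian isoperimetry, and the identity $f=\varphi(G)\,G'$. You derive both ingredients from the inclusions $\lambda P_s+(1-\lambda)P_t\subseteq P_{\lambda s+(1-\lambda)t}$ and $P_m+hB_2\subseteq P_{m+h\sigma_{\max}}$, and you work with right derivatives, which match the right-limit definition of $f$. This fills in what the paper only cites, and it spares you the paper's ``restrict to points of differentiability and extend by continuity'' step.

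One step, however, is stated backwards. Since $\varphi(G(m))\le\varphi(0)$, the identity $f(m)=\varphi(G(m))\,G'_+(m)$ gives $G'_+(m)=f(m)/\varphi(G(m))\ge\sqrt{2\pi}\,f(m)$, not $\le$. If $F(m)$ were strictly larger than $1/2$, the last inequality of your final chain would fail.

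It is rescued because $F(m)=1/2$ is forced here. On $(0,\infty)$, $G$ is finite and concave; note $F<1$ there, since $m>0$ forces some $\sigma_i>0$. So $G$ is continuous, $M$ has no atom at $m$, and $\mathbb{P}\{M\ge m\}=1-F(m)\ge 1/2$ yields $F(m)\le 1/2$. Your closing remark about continuity of $F$ on $(0,\infty)$ is exactly the needed fact, and concavity of $G$ gives it directly, but you have to invoke it at this step.

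Alternatively, you can avoid the issue entirely by keeping the sharper output of your isoperimetric step, $G(m+h\sigma_{\max})\ge G(m)+h$:
\[
f(m+h\sigma_{\max})\le\varphi\big(G(m)+h\big)\,G'_+(m)=f(m)\,\frac{\varphi(G(m)+h)}{\varphi(G(m))}=f(m)\,e^{-hG(m)-h^2/2}\le f(m)\,e^{-h^2/2}.
\]
This uses only $G(m)\ge 0$. With either repair the proof is complete. The paper itself uses $G(m)=0$ implicitly when it writes $G'(m)=f(m)/\varphi(0)$.
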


Lemma \ref{prop:extremal-quantile} provides a differential analog of the Gaussian concentration inequality, which states that $1 - F(m + h\sigma_{\mathrm{max}}) \le 1 - \Phi(h)$. It follows from the celebrated inequality of \citet{ehrhard1983symetrisation}, which implies the following.
\begin{lemma}[Ehrhard concavity]\label{thm:ehrard}
    The map $G(t) \!=\! \Phi^{-1}\{F(t)\}$ is concave on the support of $F$.
\end{lemma}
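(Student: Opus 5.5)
Since $\{M\le t\}=\{Z\in K_t\}$ with $K_t=\{z\in\mathbb{R}^p:\max_i z_i\le t\}$, we will derive concavity of $G=\Phi^{-1}\circ F$ from Ehrhard's inequality for Gaussian measures. Ehrhard's inequality states that for any Gaussian measure $\gamma$ on $\mathbb{R}^p$ (including degenerate ones, through a limit or by restricting to the support of $\Sigma$), any convex Borel sets $A,B$, and $\lambda\in[0,1]$,
\[
\Phi^{-1}\bigl(\gamma(\lambda A+(1-\lambda)B)\bigr)\ \ge\ \lambda\,\Phi^{-1}(\gamma(A))+(1-\lambda)\,\Phi^{-1}(\gamma(B)),
\]
where the right side is interpreted as $-\infty$ whenever either measure vanishes. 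Because the law of $Z$ is a push-forward of the standard Gaussian under $\Sigma^{1/2}$, and $\Sigma^{1/2}$ is linear, the inequality carries over directly from the standard Gaussian case. Working with the standard Gaussian and the polytope $P_t=\{u:\max_i(\Sigma^{1/2}u)_i\le t\}$ introduced in the proof sketch of Proposition~\ref{prop:general-finite-bound} therefore costs nothing.

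The central step is the Minkowski-combination identity: for $s,t$ and $\lambda\in[0,1]$,
\[
\lambda P_s+(1-\lambda)P_t\subseteq P_{\lambda s+(1-\lambda)t}.
\]
To see this, note that if $\max_i(\Sigma^{1/2}u)_i\le s$ and $\max_i(\Sigma^{1/2}v)_i\le t$, then subadditivity of the coordinatewise maximum and linearity give $\max_i(\Sigma^{1/2}(\lambda u+(1-\lambda)v))_i\le\lambda s+(1-\lambda)t$. The sets $P_t$ are convex, being intersections of half-spaces. Combining monotonicity of $\gamma$ with Ehrhard's inequality then yields
\[
G(\lambda s+(1-\lambda)t)\ \ge\ \lambda G(s)+(1-\lambda)G(t).
\]
This holds for all $s,t$ with $F(s),F(t)\in(0,1)$, and hence $G$ is concave on the support of $F$. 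At endpoints where $F=1$ the value $G=+\infty$, which is handled by restricting to the interval where $0<F<1$; this interval is convex because $F$ is monotone.

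The only genuine obstacle is justifying Ehrhard's inequality for arbitrary convex sets, since Ehrhard's original proof covered only convex sets. Here the sets are closed convex polyhedra, so the classical version suffices, and degenerate $\Sigma$ is handled by the push-forward reduction above. A secondary point is that the function is possibly $-\infty$ on $(-\infty,\inf\operatorname{supp})$. Concavity is asserted only on the support, where $F>0$: the support of the law of $M$ is an interval $[a,\infty)$, or all of $\mathbb{R}$, so both $s$ and $t$ lie in it and the combination above is valid there.
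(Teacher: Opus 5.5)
Your proposal is correct and follows the same route as the paper, which simply cites Ehrhard's inequality; your inclusion $\lambda P_s+(1-\lambda)P_t\subseteq P_{\lambda s+(1-\lambda)t}$, together with the push-forward to the standard Gaussian, is exactly the step the paper leaves implicit. Two minor wording points: the extension you allude to is Borell's, to arbitrary Borel sets, and it is not needed since the $P_t$ are convex polyhedra; also $F$ can vanish at the left endpoint $0$ of the support (e.g.\ $Z=(g,-g)$), so $F>0$ does not hold on the whole support, but your $-\infty$ convention already covers this.
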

\citet{bobkov2008note} showed, moreover, that $G(t)$ is concave {if and only if} $F$ is the cumulative distribution function of the supremum of a separable Gaussian process.

\begin{proof}[Proof of Lemma \ref{prop:extremal-quantile}]
Let $G(s) = \Phi^{-1}\{F(s)\}$, so that $F(s) = \Phi(G(s))$. By Lemma \ref{thm:ehrard}, $G$ is concave, hence differentiable almost everywhere with $G'$ non-increasing; at these points,
\(
f(s) = G'(s)\,\vp(G(s)).
\)
We restrict to points of differentiability and extend to all $h \ge 0$ by continuity. This immediately gives
\begin{equation}\label{eq:G-prime-at-median}
G'(m) = f(m)/\vp(0) = \sqrt{2\pi}\,f(m).
\end{equation}
Gaussian concentration gives the slope-one lower bound
\[
G(m + h\sigma_{\mathrm{max}}) \;\ge\; h, \qquad h \ge 0.
\]
Combining this with $G' \downarrow$ and $G(m+h\sigma_{\mathrm{max}}) \ge h \ge 0$, which implies $\vp(G(m + h\sigma_{\mathrm{max}})) \le \vp(h)$, we obtain
\[
f(m + h\sigma_{\mathrm{max}})
\;=\; G'(m + h\sigma_{\mathrm{max}})\,\vp(G(m + h\sigma_{\mathrm{max}}))
\;\le\; G'(m)\,\vp(h)
\;=\; \sqrt{2\pi}\, f(m)\,\vp(h),
\]
which is \eqref{eq:extremal-density-bound}. For the envelope, note that if $m > 0$ the non-uniform bound \eqref{eq:general-finite-bound} applies at $t = m$ to give $f(m) \le 4\log p/m$; combined with the monotonicity of $f$ on $[m,\infty)$ established above, this yields $\sup_{t \ge m} f(t) = f(m) \le 4\log p/m$ and the displayed Gaussian envelope.
\end{proof}


\section{Some implications}\label{sec:applications}

In this section, we work out the implications of the bounds of Section \ref{sec:main-technical} for Gaussian and bootstrap approximation of maxima of high-dimensional sums. The novelty is that these approximations place weaker restrictions on the pointwise variances, and thus they remain applicable under degeneracy.

We rely on the high-dimensional couplings of \citet{koike2021notes}, which build on the seminal Gaussian and bootstrap approximation bounds of \citet{chernozhukov2017central} using a randomized Lindeberg construction. The coupling is the substantive ingredient; our density bounds enter only as a replacement for Koike's application of Nazarov's surface area inequality, and the only work is subsequent optimization of the bound. In other words, the corollaries below follow by combining the two results with little additional effort. We focus on this particular coupling because its proof needs only anti-concentration of \emph{centered} Gaussian maxima\textemdash the regime in which our results apply. Subsequent improvements to the high-dimensional CLT \citep[e.g.,][]{chernozhukov2022improved} achieve tighter dimension dependence but rely internally on anti-concentration of non-centered Gaussian vectors, which we do not address.

Throughout this section, $X_1, \ldots, X_n$ are independent, centered random vectors in $\mathbb{R}^p$ whose entries satisfy the sub-exponential bound $\|X_{ij}\|_{\psi_1} \le K$ for some constant $K \ge 1$. We write $S_n = \frac{1}{\sqrt n}\sum_{i=1}^n X_i$ for the normalized sum and $\Sigma = \mathbb{E}[S_n S_n^\top]$ for its covariance, with largest marginal variance
\begin{equation}\label{eq:nontrivial-var}
\sigma_{\mathrm{max}}^2 = \max_{1 \le j \le p} \Sigma_{jj} = \max_{1 \le j \le p} \frac{1}{n}\sum_{i=1}^n \mathbb{E}[X_{ij}^2].
\end{equation}
We let $Z \sim N(0,\Sigma)$ denote the Gaussian analog of $S_n$, and $t_q^\Sigma$ the $q^{\mathrm{th}}$ quantile of $\max_{1 \le j \le p} Z_j$. For the bootstrap results we additionally fix multipliers $(w_i)_{i=1}^n$, i.i.d.\ and independent of $X = (X_1, \ldots, X_n)$, with $\mathbb{E}[w_i] = 0$, $\mathbb{E}[w_i^2] = 1$, and $|w_i| \le b$ almost surely for some constant $b \ge 1$, and form the multiplier (wild) bootstrap statistic
\[
S_n^{\mathrm{WB}} = \frac{1}{\sqrt n}\sum_{i=1}^n w_i (X_i - \bar X_n), \qquad \bar X_n = \frac{1}{n}\sum_{i=1}^n X_i.
\]

We additionally impose the following mild large-sample condition, which ensures that the lower-order remainder term in the coupling of \citet{koike2021notes} is dominated.

\begin{assumption}\label{assn:sampling}
The quantities above satisfy
\[
b\,K^2(\log p)^{1/2}(\log n)^2 \;\le\; \sqrt n,
\]
with $b = 1$ understood for results involving no multipliers.
\end{assumption}

Assumption \ref{assn:sampling} can be dropped whenever $K, b = O(1)$, as a non-trivial rate already requires $\log^7 p \lesssim n$, and then $(\log p)^{1/2}(\log n)^2 = o(\sqrt n)$. It plays no role beyond ensuring the second remainder term of the coupling is of lower order; see the proof of Corollaries \ref{cor:nonuniform-clt}--\ref{cor:uniform-bootstrap} in Appendix \ref{sec:application-proofs}.

\subsection{Gaussian approximation}\label{sec:app-clt}

The utility of the approximations below is their immediate ability to handle vectors $X_i$, which may have many coordinates with very small variance. In fact, the main requirement is that $\sigma_{\mathrm{max}}^2 = \max_{j} \frac{1}{n}\sum_{i=1}^n \mathbb{E}X_{ij}^2$ is not too small, which is enforced by a simple, global rescaling; note that \eqref{eq:nonuniform-clt} is scale-invariant.

\begin{corollary}[CLT, signed maxima]\label{cor:nonuniform-clt}
    Under Assumption~\ref{assn:sampling}, we have
    \begin{equation}\label{eq:nonuniform-clt}
    \sup_{q \ge \frac{2}{3}} \left| \mathbb{P}\left\{\max_{1 \le j \le p} Z_j \le t_{q}^\Sigma\right\} - \mathbb{P}\left\{\max_{1 \le j \le p} \frac{1}{\sqrt n} \sum_{i=1}^{n}X_{ij}\le t_{q}^\Sigma \right\}\right| \lesssim {\sigma_{\mathrm{max}}^{-2/3}}\left(\frac{K^4\log^7p}{n}\right)^{\frac 1 6}
    .
    \end{equation}
\end{corollary}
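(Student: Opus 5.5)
We would follow the standard smoothing-plus-anti-concentration route, with Corollary \ref{cor:upper-tail-bound} replacing Nazarov's inequality. First, by scale invariance, rescale $X_i$ (and hence $Z$, $K$, $\sigma_{\max}$) so that $\sigma_{\max}=1$. The factor $\sigma_{\max}^{-2/3}$ then comes out from how $K$ transforms: $K^4\mapsto K^4/\sigma_{\max}^4$, which gives $(K^4/\sigma_{\max}^4)^{1/6}=K^{2/3}\sigma_{\max}^{-2/3}$. We also check that Assumption \ref{assn:sampling} is compatible with this rescaling, or else note that the bound is trivial when $\sigma_{\max}$ is small.

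Next, we invoke the coupling of \citet{koike2021notes} in its smoothed form. For every $\beta>0$ and threshold $t$, the smooth-max approximation $F_\beta(x)=\beta^{-1}\log\sum_j e^{\beta x_j}$, composed with a smooth indicator of width $\varepsilon$, yields
\[
\mathbb{P}\{\max_j (S_n)_j\le t\}\le \mathbb{P}\{\max_j Z_j\le t+\varepsilon\}+R_n(\varepsilon),
\]
together with the matching lower inequality with $t-\varepsilon$. The remainder $R_n(\varepsilon)$ is of order $\varepsilon^{-3}(K^{4}\log^{7}p/n)^{1/2}$ in the leading term, plus a second lower-order term that is controlled by Assumption \ref{assn:sampling}. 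The key point is that the proof of this coupling uses anti-concentration only for the centered vector $Z$ (through the randomized Lindeberg step), in windows $[t-\varepsilon,t+\varepsilon]$ around the target threshold. We therefore need Gaussian anti-concentration at $t=t_q^\Sigma$ only for $q\ge 2/3$.

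At those thresholds, apply Corollary \ref{cor:upper-tail-bound} with some $r\in(1/2,2/3)$, say $r=7/12$. The window $[t_q^\Sigma-\varepsilon,t_q^\Sigma+\varepsilon]$ lies above $q_r^M$ once $\varepsilon\lesssim 1$, because $t_{2/3}-t_{7/12}\gtrsim \sigma_{\max}\cdot$const. This inequality in fact needs checking; we would instead simply take the left window endpoint above $q^M_r$ whenever $\varepsilon\le t_{2/3}/2$, using $t_{2/3}\ge \Phi^{-1}(2/3)\sigma_{\max}$. The density on the window is then at most $C\log p$, since $\sigma_{\max}\vee\mu\ge 1$. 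This gives
\[
|\text{difference}|\lesssim \varepsilon\log p+\varepsilon^{-3}\left(\frac{K^4\log^7 p}{n}\right)^{1/2}+\text{lower order}.
\]

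Finally, optimize over $\varepsilon$. Writing $\Delta=K^4\log^7p/n$, set $\varepsilon\log p=\varepsilon^{-3}\Delta^{1/2}$. Tracking the precise powers of $\log p$ in Koike's remainder so that the exponents combine to $\Delta^{1/6}$ is part of the bookkeeping; the balance gives order $\Delta^{1/6}$ up to the stated exponent. If $\varepsilon$ exceeds the admissible range, the bound exceeds a constant and is trivial.

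The main obstacle is the second step. We must verify, inside Koike's argument, that anti-concentration is needed only near the evaluation point $t_q^\Sigma$, and not globally or for non-centered shifts. If the coupling instead requires $\sup_t$ anti-concentration, the fallback is to restate it as a one-sided smoothing inequality at a fixed $t$, which is standard: compare $\mathbb{E}g(F_\beta(S_n))$ with $\mathbb{E}g(F_\beta(Z))$ for $g$ supported near $t$. This reduces the requirement to the local window.
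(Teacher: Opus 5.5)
Your architecture is the paper's: a one-sided coupling, then the scale-free density bound on a window lying above a constant multiple of $\sigma_{\mathrm{max}}$, then optimization in $\varepsilon$. But the coupling step as you state it cannot give the claimed rate, and this is not a matter of bookkeeping. With a remainder of order $\varepsilon^{-3}\Delta^{1/2}$, $\Delta = K^4\log^7 p/n$, minimizing $\varepsilon\log p + \varepsilon^{-3}\Delta^{1/2}$ yields $(\log p)^{3/4}\Delta^{1/8}$, which is an $n^{-1/8}$ rate. No redistribution of $\log p$ factors changes the exponent of $n$.

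The $n^{-1/6}$ rate needs a remainder of order $\varepsilon^{-2}n^{-1/2}$. This is what the paper takes from Koike (Lemma \ref{lem:onesided-prokhorov}): for every Borel $A\subset\mathbb{R}$ and every $\varepsilon \ge CK^2n^{-1/2}(\log n)(\log p)$,
\[
\mathbb{P}\Big\{\max_j (S_n)_j \in A\Big\} \le \mathbb{P}\Big\{\max_j Z_j \in A^{6\varepsilon}\Big\} + R_n(\varepsilon), \qquad R_n(\varepsilon) = C\varepsilon^{-2}\Big(\frac{K^2(\log p)^{3/2}}{\sqrt n} + \Delta_2\Big),
\]
with $\Delta_2 = K^4(\log p)^2(\log n)^2/n$. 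Balancing $\varepsilon\log p/\sigma_{\mathrm{max}}$ against the first remainder term gives $\varepsilon^* \asymp \sigma_{\mathrm{max}}^{1/3}K^{2/3}(\log p)^{1/6}n^{-1/6}$ and exactly $R = \sigma_{\mathrm{max}}^{-2/3}(K^4\log^7 p/n)^{1/6}$. Assumption \ref{assn:sampling} gives $\varepsilon^{*-2}\Delta_2 \lesssim R$, and it forces $R \gtrsim 1$ when $\varepsilon^*$ falls below the lemma's threshold. If instead $\varepsilon^* > \sigma_{\mathrm{max}}/40$, then $R \gtrsim \log p$. In both of these cases the bound is trivial.

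This lemma holds for all Borel sets and uses no anti-concentration internally, so the step you flag as the main obstacle disappears. Taking $A = (-\infty,t]$ and $A = (t,\infty)$ gives
\[
\Big|\mathbb{P}\Big\{\max_j (S_n)_j \le t\Big\} - \mathbb{P}\Big\{\max_j Z_j \le t\Big\}\Big| \le \mathbb{P}\Big\{t - 6\varepsilon < \max_j Z_j \le t + 6\varepsilon\Big\} + R_n(\varepsilon),
\]
and the density bound is needed only on that window. Your fallback (smooth max $F_\beta$ composed with a smooth indicator) is the same comparison you attached to the $\varepsilon^{-3}$ remainder. It localizes the window but does not repair the exponent.

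Two smaller corrections.
\begin{itemize}
\item For the window, bypass Corollary \ref{cor:upper-tail-bound} and quantile gaps. As you suspected, $t_{2/3} - t_{7/12} \gtrsim \sigma_{\mathrm{max}}$ fails in general: for i.i.d.\ coordinates the gap is of order $\sigma_{\mathrm{max}}/\sqrt{\log p}$. Instead, $t_q^\Sigma \ge \Phi^{-1}(\nicefrac{2}{3})\sigma_{\mathrm{max}} \ge 2\sigma_{\mathrm{max}}/5$. So for $\varepsilon \le \sigma_{\mathrm{max}}/40$ the window lies in $[\sigma_{\mathrm{max}}/4,\infty)$, where Proposition \ref{prop:general-finite-bound} gives $f \le 16\log p/\sigma_{\mathrm{max}}$ directly.
\item Do not rescale to $\sigma_{\mathrm{max}} = 1$. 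The coupling lemma is not homogeneous in $K$: it uses $K \ge 1$ to bound fourth moments by $K^4$. After rescaling you would need Assumption \ref{assn:sampling} with $K/\sigma_{\mathrm{max}}$ in place of $K$. When that stronger condition fails, the bound is not trivial: you only get $R > \log p/(\log n)^{2/3}$, which can be small when $n \gg p$. In the original units, $\varepsilon^{*-2}\Delta_2/R = K^2(\log p)^{1/2}(\log n)^2/\sqrt n$ involves no $\sigma_{\mathrm{max}}$, so the assumption as stated suffices.
\end{itemize}
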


For unsigned maxima, the same rate holds uniformly over all quantiles.

\begin{corollary}[CLT, unsigned maxima]\label{cor:uniform-clt-unsigned}
    Under Assumption~\ref{assn:sampling},
    \begin{equation}\label{eq:uniform-clt-unsigned}
        \sup_{t \in \mathbb{R}} \left| \mathbb{P}\Big\{\max_{1 \le j \le p}|(S_n)_j| \le t\Big\} - \mathbb{P}\Big\{\max_{1 \le j \le p}|Z_j| \le t\Big\}\right|
        \;\lesssim\;
        \sigma_{\mathrm{max}}^{-2/3}\left(\frac{K^4\log^7(2p)}{n}\right)^{1/6}.
    \end{equation}
\end{corollary}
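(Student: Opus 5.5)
The plan is to combine the high-dimensional coupling of \citet{koike2021notes} with the uniform density bound of Proposition~\ref{prop:unsigned-density-bound}. First, represent $\max_j|(S_n)_j|$ as a \emph{signed} maximum of the $2p$-dimensional sum $\tilde S_n = n^{-1/2}\sum_i \tilde X_i$ with $\tilde X_i = (X_i, -X_i)$. The summands $\tilde X_i$ keep the sub-exponential bound $\|\tilde X_{ij}\|_{\psi_1}\le K$. The Gaussian analog is $\tilde Z = (Z,-Z)\sim N(0,\tilde\Sigma)$, whose largest marginal variance is still $\sigma_{\max}^2$, and whose signed maximum is exactly $M^* = \max_j|Z_j|$. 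Assumption~\ref{assn:sampling} transfers with $\log p$ replaced by $\log(2p)$, up to constants.

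Second, invoke Koike's coupling in its ``smoothing plus anti-concentration'' form. For each $\eta>0$ there is an error bound of the form
\[
\sup_t\bigl|\mathbb{P}\{\max_j \tilde S_{n,j}\le t\}-\mathbb{P}\{\max_j\tilde Z_j\le t\}\bigr| \lesssim \sup_t \mathbb{P}\{t< M^*\le t+\eta\} + R_n(\eta),
\]
where $R_n(\eta)$ is a smoothing/Lindeberg remainder. Its leading term is roughly $\eta^{-3}K^2\log^{3}(2p)\cdot$(moment terms)$/\sqrt n$, plus a lower-order term that Assumption~\ref{assn:sampling} renders negligible. The anti-concentration term is where Koike uses Nazarov's inequality; here it is replaced by Proposition~\ref{prop:unsigned-density-bound}. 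Integrating \eqref{eq:unsigned-density-bound} over a window of length $\eta$ gives the bound $4\eta\log(2p)/\sigma_{\max}$, uniformly in $t\in\mathbb{R}$. This uniformity is the key point, because Koike's argument requires anti-concentration at all shifted thresholds, not only at the threshold of interest. For unsigned maxima, Proposition~\ref{prop:unsigned-density-bound} gives exactly that, with no lower restriction on the window.

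Third, optimize over $\eta$. Balancing $\eta\log(2p)/\sigma_{\max}$ against the remainder, which scales like $(K^4\log^5(2p)/n)^{1/2}\eta^{-2}$ or similar depending on Koike's exact exponents, should produce the $\sigma_{\max}^{-2/3}(K^4\log^7(2p)/n)^{1/6}$ rate.

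The main obstacle is bookkeeping rather than ideas. I would need to state Koike's coupling in a form that isolates the anti-concentration input as $\sup_t\mathbb{P}\{t<M^*\le t+\eta\}$, and then check that the powers of $\log(2p)$ and $\eta$ combine to the claimed exponent $7$ and power $1/6$. Koike's remainder involves a $\log$-factor from the smoothing parameter and a moment term bounded via the $\psi_1$ condition, and I would first verify the balance with the homogeneous choice $\eta \asymp \sigma_{\max}^{1/3}(K^4\log^{1}(2p)/n)^{1/6}$ adjusted accordingly. A secondary check is that no step of Koike's proof uses anti-concentration of non-centered maxima. The text preceding the corollaries asserts that this holds for the chosen coupling, so I would rely on that.
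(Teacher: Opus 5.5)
Your architecture matches the paper's. You pass to the reflected $2p$-dimensional sum built from $(X_i,-X_i)$, compare it with $(Z,-Z)$ through Koike's coupling, replace Nazarov's inequality by the $t$-uniform bound of Proposition~\ref{prop:unsigned-density-bound}, and optimize the window width. The gap is in the quantitative step, which is what fixes the exponents $7$ and $1/6$. Neither remainder you write down produces them:
\begin{itemize}
\item A smoothing remainder of order $\eta^{-3}K^2\log^3(2p)/\sqrt n$, balanced against $\eta\log(2p)/\sigma_{\max}$, gives only an $n^{-1/8}$ rate. This is the older smoothing-plus-Lindeberg rate.
\item The remainder $(K^4\log^5(2p)/n)^{1/2}\eta^{-2}$ gives $\sigma_{\max}^{-2/3}(K^4\log^9(2p)/n)^{1/6}$.
\end{itemize}
So as written the proposal does not reach the stated bound; the phrase ``or similar depending on Koike's exact exponents'' is carrying the entire claim. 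You also never check the lower admissibility threshold of the coupling.

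The missing input is the one-sided Prokhorov-type coupling of Lemma~\ref{lem:onesided-prokhorov} (Koike's Lemma 5.7), applied in dimension $2p$ with $y=0$. For $\varepsilon \ge CK^2n^{-1/2}(\log n)\log(2p)$, its remainder is $C\varepsilon^{-2}(\Delta_1+\Delta_2)$, with $\Delta_1 = K^2\log^{3/2}(2p)/\sqrt n$ and $\Delta_2 = K^4\log^2(2p)(\log n)^2/n$. Taking $A=(-\infty,t]$ and $A=(t,\infty)$ bounds the Kolmogorov difference by $\mathbb{P}\{t-6\varepsilon<M^*\le t+6\varepsilon\} + C\varepsilon^{-2}(\Delta_1+\Delta_2)$. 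This coupling uses no anti-concentration internally, which also settles your ``secondary check.'' Now balance $\varepsilon\log(2p)/\sigma_{\max}$ against $\varepsilon^{-2}\Delta_1$. This gives exactly your choice $\varepsilon^*\asymp\sigma_{\max}^{1/3}(K^4\log(2p)/n)^{1/6}$ and the rate $R^*=\sigma_{\max}^{-2/3}(K^4\log^7(2p)/n)^{1/6}$. Assumption~\ref{assn:sampling} gives $\Delta_2\lesssim\Delta_1$, so the second remainder is of lower order. Finally, if $\varepsilon^*$ falls below the coupling threshold, direct substitution gives $R^*\gtrsim \sqrt n/\{K^2\log^{1/2}(2p)(\log n)^2\}\gtrsim 1$ by Assumption~\ref{assn:sampling}. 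In that case the bound holds trivially, because the left-hand side is at most $1$.
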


\subsection{Bootstrap approximation}\label{sec:app-bootstrap}

A parallel result holds for the multiplier (or wild) bootstrap statistic $S_n^{\mathrm{WB}}$ introduced above, analyzed by \citet{koike2021notes}, whose conditional law given the data $X = (X_1, \ldots, X_n)$ approximates that of $\max_{1 \le j \le p} Z_j$.

\begin{corollary}[Bootstrap, signed maxima]\label{cor:nonuniform-bootstrap}
    Under Assumption~\ref{assn:sampling},
    \begin{equation}\label{eq:nonuniform-bootstrap}
        \mathbb{E}\!\left[\sup_{q \ge \frac{2}{3}}\, \left|\mathbb{P}\!\left\{\max_{1 \le j \le p}\, (S_n^{\mathrm{WB}})_j \le t_q^\Sigma\,\big|\, X\right\} - \mathbb{P}\!\left\{\max_{1 \le j \le p} Z_j \le t_q^\Sigma\right\}\right|\right]
        \lesssim \sigma_{\mathrm{max}}^{-2/3}\left(\frac{b^2 K^4\log^7 p}{n}\right)^{\frac{1}{6}}.
    \end{equation}
\end{corollary}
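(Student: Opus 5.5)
The plan is to transfer the proof of Corollary \ref{cor:nonuniform-clt} to the bootstrap setting, using the multiplier-bootstrap coupling of \citet{koike2021notes} in place of the Gaussian one. Conditionally on $X$, the vector $S_n^{\mathrm{WB}}$ is a bounded-multiplier sum with conditional covariance $\widehat\Sigma = \frac1n\sum_i (X_i-\bar X_n)(X_i-\bar X_n)^\top$. Koike's argument compares the conditional law of $\max_j (S_n^{\mathrm{WB}})_j$ with that of $\max_j Z_j$, $Z\sim N(0,\Sigma)$, through a smoothing step. His bound has the generic form
\[
\mathbb{P}\{\max_j (S_n^{\mathrm{WB}})_j \le t \mid X\} - \mathbb{P}\{\max_j Z_j \le t\} \le \mathbb{P}\{t-\eta \le \max_j Z_j \le t+\eta\} + R_n(\eta),
\]
where $R_n(\eta)$ collects the coupling error and the covariance-estimation error $\|\widehat\Sigma-\Sigma\|_\infty$. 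Only the \emph{centered} target $Z$ enters the window probability, and the expected size of $R_n(\eta)$ is controlled in terms of $b$, $K$, $\log p$ and $n$, with the second-order remainder absorbed using Assumption \ref{assn:sampling}. I will take this decomposition as given from \citet{koike2021notes}, with the supremum over $t$ taken inside the expectation. I will check that his supremum is over all $t$, so that restricting to $t = t_q^\Sigma$ with $q\ge 2/3$ is free.

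The new input is the window term. For $t=t_q^\Sigma$ with $q \ge 2/3$, I need $\mathbb{P}\{t-\eta\le M\le t+\eta\}\lesssim \eta\log p/\sigma_{\max}$, and there are two cases.

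\begin{itemize}
\item \textbf{Window inside the region covered by Corollary \ref{cor:upper-tail-bound}.} If $t-\eta \ge t_{0.6}^\Sigma$, then $f\lesssim \log p/\sigma_{\max}$ on the whole window. Here I apply Corollary \ref{cor:upper-tail-bound} with $r=0.6$, using $\sigma_{\max}\vee\mu\ge\sigma_{\max}$.
\item \textbf{Window reaching below the $0.6$ quantile.} Since $t_{2/3}^\Sigma - t_{0.6}^\Sigma \ge \sigma_{\max}\cdot c$ is not automatic, I avoid relying on it. Instead I bound the window directly using $F(t)-F(t-\eta)$. Wherever $f$ is not controlled, I use the trivial bound $1$ after choosing $\eta\le c\,\sigma_{\max}$. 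More simply, Proposition \ref{prop:general-finite-bound} gives $f(s)\le 4\log p/s$ for all $s>0$. Because $t_{2/3}^\Sigma\ge \sigma_{\max}\Phi^{-1}(2/3)$, every $s\ge t-\eta\ge \sigma_{\max}\Phi^{-1}(2/3)/2$ satisfies $f(s)\lesssim\log p/\sigma_{\max}$ once $\eta\le \sigma_{\max}\Phi^{-1}(2/3)/2$.
\end{itemize}

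So the window term is $\lesssim \eta\log p/\sigma_{\max}$ for small $\eta$. For $\eta$ larger than $\sigma_{\max}\Phi^{-1}(2/3)/2$, the claimed bound is trivial, because the right-hand side then exceeds a constant. The lower deviation is handled symmetrically.

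The remaining step is optimizing over $\eta$. Koike's remainder behaves like $\eta^{-2}\,(b^2K^4\log^5 p/n)^{1/2}$ up to log factors; this is the smoothing cost at scale $\eta$. Balancing it against $\eta\log p/\sigma_{\max}$ should reproduce the rate $\sigma_{\max}^{-2/3}(b^2K^4\log^7p/n)^{1/6}$, exactly as in the Gaussian case with $K^4$ replaced by $b^2K^4$.

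I expect the main obstacle to be verifying that Koike's bootstrap bound really requires anti-concentration only of the fixed centered target $N(0,\Sigma)$. It must not require anti-concentration of $N(0,\widehat\Sigma)$, or of shifted maxima, since our density bounds do not cover those. If his proof instead invokes anti-concentration for $N(0,\widehat\Sigma)$, I would route it through a Gaussian comparison between $N(0,\widehat\Sigma)$ and $N(0,\Sigma)$, whose cost is controlled by $\mathbb{E}\|\widehat\Sigma-\Sigma\|_\infty$. The comparison is smoothed and then converted back using our window bound for $Z$. This costs only the same $\eta$-balancing and preserves the rate.
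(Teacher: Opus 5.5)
Your proposal follows the paper's proof essentially step for step. The paper uses Koike's one-sided multiplier-bootstrap coupling (Lemma \ref{lem:onesided-bootstrap}), which couples directly to the centered target $N(0,\Sigma)$ and absorbs the covariance mismatch into a $t$-independent remainder $\rho_\varepsilon(X)$, so your worry about needing anti-concentration for $N(0,\widehat\Sigma)$ does not arise. It then bounds the window exactly as in your second bullet, via $t_q^\Sigma \ge \sigma_{\max}\Phi^{-1}(\nicefrac{2}{3}) \ge \nicefrac{2\sigma_{\max}}{5}$, $\varepsilon \le \sigma_{\max}/40$, and Proposition \ref{prop:general-finite-bound}, before balancing in $\varepsilon$.

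Two small corrections are needed.

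First, the leading remainder is $\varepsilon^{-2} bK^2(\log p)^{3/2}/\sqrt n = \varepsilon^{-2}(b^2K^4\log^3 p/n)^{1/2}$, not the $\log^5 p$ form you wrote. Balancing your stated form against $\eta\log p/\sigma_{\max}$ yields $\log^9 p$ rather than $\log^7 p$.

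Second, the coupling is only valid for $\varepsilon \ge C'bK^2 n^{-1/2}(\log n)(\log p)$. You should add that if the optimizing $\varepsilon$ falls below this threshold, Assumption \ref{assn:sampling} forces the rate to be $\gtrsim 1$, so the bound holds trivially.
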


\begin{corollary}[Bootstrap, unsigned maxima]\label{cor:uniform-bootstrap-unsigned}
    Under Assumption~\ref{assn:sampling},
    \begin{equation}\label{eq:uniform-bootstrap-unsigned}
        \mathbb{E}\sup_{t \in \mathbb{R}} \left| \mathbb{P}\Big\{\max_{1 \le j \le p}|(S_n^{\mathrm{WB}})_j| \le t \,\big|\, X\Big\} - \mathbb{P}\Big\{\max_{1 \le j \le p}|Z_j| \le t\Big\}\right|
        \;\lesssim\;
        \sigma_{\mathrm{max}}^{-2/3}\left(\frac{b^2K^4\log^7(2p)}{n}\right)^{1/6}.
    \end{equation}
\end{corollary}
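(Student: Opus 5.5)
The plan is to combine the multiplier-bootstrap coupling of \citet{koike2021notes} with Proposition \ref{prop:unsigned-density-bound}, exactly as for Corollary \ref{cor:uniform-clt-unsigned}. First, represent the unsigned maximum as a signed maximum: for $\tilde X_i = (X_i, -X_i) \in \mathbb{R}^{2p}$ we have $\max_j |(S_n^{\mathrm{WB}})_j| = \max_{k \le 2p} (\tilde S_n^{\mathrm{WB}})_k$ and $\max_j |Z_j| = \max_k \tilde Z_k$ with $\tilde Z \sim N(0,\tilde\Sigma)$. The entries of $\tilde X_i$ keep the $\psi_1$ bound $K$, the dimension becomes $2p$, and the largest marginal variance is unchanged. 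It therefore suffices to bound the conditional Kolmogorov distance between $\max_k(\tilde S_n^{\mathrm{WB}})_k$ given $X$ and $\max_k \tilde Z_k$, averaged over $X$.

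Next, I would use the smoothing form of Koike's bootstrap coupling. Conditionally on $X$, $\tilde S_n^{\mathrm{WB}}$ is a multiplier sum. For every $\beta>0$ (or every smoothing scale $\varepsilon = \beta^{-1}\log(2p)$), Koike's comparison gives a bound of the form
\[
\mathbb{E}\,\sup_{t}\Big|\mathbb{P}\{\max\nolimits_k(\tilde S_n^{\mathrm{WB}})_k \le t \mid X\} - \mathbb{P}\{\max\nolimits_k\tilde Z_k \le t\}\Big| \lesssim \sup_t \mathbb{P}\{t-\varepsilon < \max\nolimits_k\tilde Z_k \le t+\varepsilon\} + R_n(\varepsilon).
\]
Here $R_n(\varepsilon)$ collects the third-moment and covariance-estimation errors, of order $\varepsilon^{-3}\sqrt{b^2K^4\log^7(2p)/n}$ up to the lower-order piece handled by Assumption \ref{assn:sampling}. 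The point is that the anti-concentration term involves only the centered Gaussian maximum $\max_k \tilde Z_k = M^*$, which is the regime where the new bounds apply. By Proposition \ref{prop:unsigned-density-bound}, the density of $M^*$ is at most $4\log(2p)/\sigma_{\max}$ uniformly in $t\ge 0$. Since $M^*\ge0$, there is no mass issue on the negative line, so the first term is at most $8\varepsilon\log(2p)/\sigma_{\max}$, uniformly over all $t\in\mathbb{R}$.

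Finally, I would balance the two terms. The total error is $\varepsilon\log(2p)/\sigma_{\max} + \varepsilon^{-3}\Delta^{1/2}$ with $\Delta = b^2K^4\log^7(2p)/n$, up to the exact power of $\log$ carried inside Koike's remainder. Choosing $\varepsilon$ to equalize the terms gives a bound of order $\sigma_{\max}^{-3/4}\Delta^{1/8}$ times logarithmic factors. To reach the stated exponent $\sigma_{\max}^{-2/3}\Delta^{1/6}$, I expect the remainder actually scales as $\varepsilon^{-2}$ times a $\Delta^{1/2}$-type quantity. Then $\varepsilon \asymp (\sigma_{\max}\Delta^{1/2})^{1/3}$ yields $\sigma_{\max}^{-2/3}\Delta^{1/6}$, with the $\log(2p)$ factors absorbed into $\Delta$.

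The main obstacle is this bookkeeping. I need to extract from \citet{koike2021notes} the precise dependence of the remainder on the smoothing scale and on $\log(2p)$, and check that their argument uses anti-concentration only through the centered maximum $\max_k \tilde Z_k$ and its shifts by at most $\varepsilon$. If it also uses anti-concentration of the bootstrap conditional law, I would transfer that from $\tilde Z$ via the same coupling, at the cost of a constant. The remaining work is confirming that Assumption \ref{assn:sampling} makes the secondary remainder dominated by $\Delta^{1/6}$. This mirrors the proof of Corollary \ref{cor:uniform-clt-unsigned}, with $K^4$ replaced by $b^2K^4$.
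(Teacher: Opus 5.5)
Your architecture matches the paper's proof: reflect to $(X_i,-X_i)$ in dimension $2p$, apply Koike's multiplier-bootstrap coupling, bound the window mass by Proposition \ref{prop:unsigned-density-bound}, and optimize $\varepsilon$. Your worry about needing anti-concentration of the bootstrap law is unfounded. Koike's Gaussian target is $N(0,\Sigma)$ with the population covariance, covariance-estimation error sits in the remainder, and the enlargement is on the Gaussian side only, so only the centered $M^*$ needs anti-concentration. The genuine gap is that the rate-determining input is guessed rather than supplied. The smoothing-type comparison you write down, with an $\varepsilon^{-3}$ remainder, cannot prove the statement; as you computed, it gives only an $n^{-1/8}$-type rate. ``I expect the remainder scales as $\varepsilon^{-2}$'' is exactly the content that has to be proved or cited.

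What is needed is the one-sided Prokhorov-type coupling of Lemma \ref{lem:onesided-bootstrap}. It is the pre-optimization form of Koike's Theorem 3.1, resting on his third-moment-matching randomized Lindeberg interpolation and a Stein-kernel estimate. Write $L_p=\log(2p)$. For each $\varepsilon \ge C' bK^2 n^{-1/2}(\log n)L_p$ there is a probability-one event $\Omega_\varepsilon$ on which, for all Borel $A$ and all $y$, the conditional probability is at most $\mathbb{P}\{\max_k \tilde Z_k - y_k \in A^{6\varepsilon}\} + \rho_\varepsilon(X)$. Here $\rho_\varepsilon(X)$ does not depend on $(A,y)$, and $\mathbb{E}\rho_\varepsilon(X) \lesssim \varepsilon^{-2}\{bK^2L_p^{3/2}/\sqrt n + b^2K^4L_p^2(\log n)^2/n\}$.

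The power of $L_p$ matters, so ``absorbing the logs into $\Delta$'' does not settle it. Balancing $\varepsilon L_p/\sigma_{\max}$ against $\varepsilon^{-2}bK^2L_p^{a}/\sqrt n$ gives $\sigma_{\max}^{-2/3}(b^2K^4/n)^{1/6}L_p^{(a+2)/3}$. This equals the claimed $L_p^{7/6}$ only for $a=3/2$.

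Two further steps in your sketch are taken for granted. First, the $\mathbb{E}\sup_t$ form. Applying the coupling with $A=(-\infty,t]$ and with $A=(t,\infty)$ gives, on $\Omega_\varepsilon$ and for every $t$, a conditional Kolmogorov gap at most $\mathbb{P}\{t-6\varepsilon<M^*\le t+6\varepsilon\}+\rho_\varepsilon(X)$. Because $\rho_\varepsilon(X)$ does not depend on $t$, you may take the supremum over $t$ before the expectation; this justifies the display you wrote.

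Second, admissibility and lower-order terms. The balancing scale $\varepsilon^*\asymp\sigma_{\max}^{1/3}b^{1/3}K^{2/3}L_p^{1/6}n^{-1/6}$ may fall below the coupling threshold. In that case the resulting rate satisfies $R^*\gtrsim \sqrt n/(bK^2L_p^{1/2}(\log n)^2)\gtrsim 1$ by Assumption \ref{assn:sampling}, so the bound is trivial. The same assumption makes the $b^2K^4L_p^2(\log n)^2/n$ term dominated by the $bK^2L_p^{3/2}/\sqrt n$ term. With these inputs in place, your plan goes through as in the paper.
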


\subsection{Uniform extension for signed maxima}\label{sec:app-uniform}

The preceding signed results concern upper quantiles. Under a complexity condition, namely that $\mu \gg \sigma_{\max}$, they extend to all quantiles and yield a high-dimensional CLT in Kolmogorov distance for possibly degenerate, centered maxima.

\begin{corollary}[Uniform CLT, signed maxima]\label{cor:uniform-clt}
    Put $\mu = \mathbb{E}[\max_{1 \le j \le p} Z_j] > 0$. Under Assumption~\ref{assn:sampling}, we have
    \begin{equation}\label{eq:uniform-clt}
        \sup_{t \in \mathbb{R}} \left| \mathbb{P}\left\{\max_{1 \le j \le p} Z_j \le t\right\} - \mathbb{P}\left\{\max_{1 \le j \le p} \frac{1}{\sqrt n} \sum_{i=1}^{n}X_{ij}\le t \right\}\right|  \lesssim e^{\frac{-\mu^2}{8\sigma_{\mathrm{max}}^2}} + \mu^{-2/3}\left(\frac{K^4\log^7p}{n\,}\right)^{\frac 1 6}.
    \end{equation}
\end{corollary}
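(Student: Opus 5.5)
The plan is to combine the high-dimensional coupling of \citet{koike2021notes} with the anti-concentration bound \eqref{eq:finite-window-eps} of Corollary \ref{cor:finite-window}. That bound takes the place of Nazarov's inequality in Koike's argument; everything else in his argument stays the same.

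The first step is the usual smoothing reduction. Koike's randomized Lindeberg coupling gives a bound of the following form, valid for every $t\in\mathbb{R}$ and every smoothing width $\varepsilon>0$:
\[
\Big|\mathbb{P}\{\max_j (S_n)_j\le t\}-\mathbb{P}\{\max_j Z_j\le t\}\Big| \;\lesssim\; \sup_{s}\mathbb{P}\{s\le \max_j Z_j\le s+\varepsilon\} \;+\; \varepsilon^{-2}\Big(\frac{K^4\log^7 p}{n}\Big)^{1/2}\cdot(\text{lower order}) \;+\; R_n.
\]
Here $R_n$ is the second remainder, which Assumption \ref{assn:sampling} makes negligible. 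I would state the coupling in a form that uses anti-concentration only of the centered maximum $\max_j Z_j$ over windows of width $\varepsilon$. The intended shape is: a term of order $\varepsilon^{-3}$ times a third-moment quantity, plus the Lévy concentration of $\max_j Z_j$ at scale $\varepsilon$. This is the point where Koike invokes Nazarov's inequality, and I will replace it.

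The second step inserts the anti-concentration bound and splits into cases.

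\textbf{Case $\varepsilon \ge \sigma_{\max}e^{-\mu^2/(8\sigma_{\max}^2)}$.} Corollary \ref{cor:finite-window} gives $\sup_s \mathbb{P}\{s\le M\le s+\varepsilon\}\lesssim \varepsilon\log p/\mu$.

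\textbf{Case of smaller $\varepsilon$.} Monotonicity lets me replace the window by one of width $\varepsilon_0=\sigma_{\max}e^{-\mu^2/(8\sigma_{\max}^2)}$. Its mass is bounded by $\mathbb{P}\{M\le\mu/2\}+4\varepsilon\log p/(\mu/2)$. The first term is at most $e^{-\mu^2/(8\sigma_{\max}^2)}$ by \eqref{eq:borell-tis}, via the split \eqref{eq:finite-window-split} with $a=\mu/2$.

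So in all cases, for every $\varepsilon>0$,
\[
\sup_s \mathbb{P}\{s\le M\le s+\varepsilon\}\lesssim e^{-\mu^2/(8\sigma_{\max}^2)}+\varepsilon\log p/\mu .
\]
This is the cleaner form to use, and it is where the exponential term in \eqref{eq:uniform-clt} comes from.

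The third step optimizes over $\varepsilon$. The total bound is
\[
e^{-\mu^2/(8\sigma_{\max}^2)}+\frac{\varepsilon\log p}{\mu}+\frac{\text{(coupling error)}}{\varepsilon^{3}}\cdots .
\]
Following the bookkeeping behind Corollary \ref{cor:nonuniform-clt}, the $\varepsilon$-dependent terms balance to
\[
\Big(\frac{\log p}{\mu}\Big)^{2/3}\Big(\frac{K^4\log^{3} p}{n}\Big)^{1/6}=\mu^{-2/3}\Big(\frac{K^4\log^7 p}{n}\Big)^{1/6}.
\]
This has the same structure as the earlier corollaries, with $\mu$ replacing $\sigma_{\max}$. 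Scale invariance is a useful check: both sides are invariant under $Z\mapsto cZ$ together with $X\mapsto cX$, $K\mapsto cK$.

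The main obstacle is the first step. I need to verify that Koike's coupling really requires only Lévy-concentration bounds for the centered Gaussian maximum, and not for shifted or non-centered maxima. If it also needs anti-concentration for $\max_j Z_j$ under perturbations, I would try absorbing those perturbations into the window width $\varepsilon$, using the fact that the bound above holds uniformly in the window location $s$.
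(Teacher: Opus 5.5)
Your route is the paper's: Koike's coupling; then the split of the window at $a=\mu/2$, with \eqref{eq:borell-tis} below the cut and the envelope $4\log p/s\le 8\log p/\mu$ above it; then balancing in $\varepsilon$. Your Step~2 bound $e^{-\mu^2/(8\sigma_{\max}^2)}+C\varepsilon\log p/\mu$ is exactly the paper's Case~II bound. The case split there is unnecessary, because the cut at $\mu/2$ works for every $\varepsilon>0$. The ``replace by width $\varepsilon_0$'' remark would in fact cost an extra factor $\sigma_{\max}\log p/\mu$ on the exponential term, but the bound you actually write down comes from the direct split. What is not settled is Step~1, and as written it is inconsistent in a way that affects the rate.

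\textbf{Closing Step 1.} The coupling to use is Koike's one-sided Prokhorov bound, Lemma~\ref{lem:onesided-prokhorov}. For every Borel $A$ and every $\varepsilon\ge CK^2n^{-1/2}(\log n)(\log p)$, it gives $\mathbb{P}\{\max_j(S_n)_j\in A\}\le\mathbb{P}\{\max_jZ_j\in A^{6\varepsilon}\}+C\varepsilon^{-2}(\Delta_1+\Delta_2)$, where $\Delta_1=K^2(\log p)^{3/2}/\sqrt n$ and $\Delta_2=K^4(\log p)^2(\log n)^2/n$.
\begin{itemize}
\item \emph{Your ``main obstacle'' disappears.} No anti-concentration is used inside this coupling. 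Applying it with shift $y=0$ and $A=(-\infty,t]$, $A=(t,\infty)$ reduces everything to $\mathbb{P}\{t-6\varepsilon<\max_jZ_j\le t+6\varepsilon\}$ for the centered maximum.
\item \emph{The $\varepsilon$-scaling must be $\varepsilon^{-2}$.} The remainder is $\varepsilon^{-2}\Delta_1$, not $\varepsilon^{-3}$ times a third-moment quantity. With the $\varepsilon^{-3}$ shape you describe (the older smoothing argument), balancing against $\varepsilon\log p/\mu$ gives $(\log p/\mu)^{3/4}D^{1/4}$, i.e.\ only an $n^{-1/8}$ rate. Your displayed balanced expression $(\log p/\mu)^{2/3}(K^4\log^3p/n)^{1/6}=(\log p/\mu)^{2/3}\Delta_1^{1/3}$ is the $\varepsilon^{-2}$ computation.
\item \emph{The second remainder and the lower threshold need checking.} The second remainder is $\varepsilon^{-2}\Delta_2$, not an $\varepsilon$-free $R_n$, and the coupling only holds above a lower threshold on $\varepsilon$. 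Both are handled by Assumption~\ref{assn:sampling}. At the optimizer, $\varepsilon^{*-2}\Delta_2=R\cdot K^2(\log p)^{1/2}(\log n)^2n^{-1/2}\le R$. If $\varepsilon^*$ falls below the threshold, then $R\asymp\varepsilon^{*-2}\Delta_1\gtrsim\sqrt n/\{K^2(\log p)^{1/2}(\log n)^2\}\gtrsim 1$, so the claimed bound holds trivially.
\end{itemize}
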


\begin{corollary}[Uniform bootstrap, signed maxima]\label{cor:uniform-bootstrap}
    With $\mu$ as in Corollary~\ref{cor:uniform-clt}, under Assumption~\ref{assn:sampling},
    \begin{equation}\label{eq:uniform-bootstrap}
        \mathbb{E} \sup_{t \in \mathbb{R}}\, \left|\mathbb{P}\!\left\{\max_{1 \le j \le p}\, (S_n^{\mathrm{WB}})_j \le t \,\big|\, X\right\} - \mathbb{P}\!\left\{\max_{1 \le j \le p} Z_j \le t\right\}\right|
        \lesssim e^{\frac{-\mu^2}{8\sigma_{\mathrm{max}}^2}}+ \mu^{-2/3}\left(\frac{b^2 K^4\log^7 p}{n}\right)^{\frac{1}{6}}.
    \end{equation}
\end{corollary}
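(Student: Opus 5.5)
The plan is to reduce Corollary~\ref{cor:uniform-bootstrap} to the same two-step structure as Corollary~\ref{cor:uniform-clt}: a coupling bound from \citet{koike2021notes} for the multiplier bootstrap, combined with an anti-concentration estimate for the centered maximum $M = \max_j Z_j$ that holds over \emph{all} windows. The anti-concentration input is \eqref{eq:finite-window-eps} of Corollary~\ref{cor:finite-window}. For every $t$ and every $\varepsilon \ge \sigma_{\max} e^{-\mu^2/(8\sigma_{\max}^2)}$,
\[
\mathbb{P}\{t \le M \le t+\varepsilon\} \lesssim \varepsilon \log p/\mu .
\]
For smaller $\varepsilon$, monotonicity in the window length lets us replace $\varepsilon$ by the threshold $\varepsilon_0 = \sigma_{\max} e^{-\mu^2/(8\sigma_{\max}^2)}$. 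The resulting extra term is $\varepsilon_0 \log p/\mu$, and by \eqref{eq:maximal} this is at most $\sqrt{2\log p}\, e^{-\mu^2/(8\sigma_{\max}^2)}$ up to constants. I expect the logarithmic factor can be absorbed. One option is to go back to the split \eqref{eq:finite-window-split} with $a=\mu/2$ and bound $\mathbb{P}\{M\le \mu/2\} \le e^{-\mu^2/(8\sigma_{\max}^2)}$ directly, rather than converting it to $\varepsilon/\sigma_{\max}$. That gives, for every $\varepsilon>0$,
\[
\mathbb{P}\{t \le M \le t+\varepsilon\} \le e^{-\mu^2/(8\sigma_{\max}^2)} + 8\varepsilon\log p/\mu .
\]
This is the clean Lévy-concentration function I will feed into the coupling.

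Next I invoke the conditional multiplier-bootstrap coupling of \citet{koike2021notes}, the one used in the proof of Corollary~\ref{cor:nonuniform-bootstrap}. For a smoothing parameter $\varepsilon>0$, it bounds $\mathbb{E}\sup_t |\mathbb{P}\{\max_j (S_n^{\mathrm{WB}})_j \le t\mid X\} - \mathbb{P}\{M\le t\}|$ by three pieces. The first is $\sup_t \mathbb{P}\{t-\varepsilon< M\le t+\varepsilon\}$, the anti-concentration term. The second is a coupling error of the form $\varepsilon^{-2}\,b K^2 (\log p)^{3/2}(\log n)\, n^{-1/2}$ or the analogous smoothing remainder, whatever exact form was used in the signed/nonuniform case. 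The third is a lower-order remainder, which Assumption~\ref{assn:sampling} makes negligible. Since I am taking this machinery verbatim from the proof of Corollary~\ref{cor:nonuniform-bootstrap}, the only change is that the anti-concentration term is now controlled uniformly in $t$ by the display above, rather than only for $t \ge t_{2/3}^\Sigma$.

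Finally I optimize over $\varepsilon$. The bound has the shape
\[
e^{-\mu^2/(8\sigma_{\max}^2)} + \varepsilon \log p/\mu + \varepsilon^{-2} R_n, \qquad R_n \asymp (b^2K^4\log^{5}p/n)^{1/2}
\]
up to the exact powers inherited from Koike. Balancing gives $\varepsilon^3 \asymp \mu R_n/\log p$. The total is then of order $(\log p)^{2/3}\mu^{-2/3}R_n^{1/3}$, which equals $\mu^{-2/3}(b^2K^4\log^7p/n)^{1/6}$ after collecting logarithms. This mirrors the nonuniform case with $\sigma_{\max}$ replaced by $\mu$, and it yields \eqref{eq:uniform-bootstrap}.

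The main obstacle is bookkeeping rather than ideas. First, the exponential term must enter additively and without a $\sqrt{\log p}$ factor, which is why I use the split form of the window bound rather than \eqref{eq:finite-window-eps} itself. Second, Koike's conditional statement has to be taken in expectation over $X$ with the anti-concentration applied only to the fixed Gaussian law of $Z$ (not a data-dependent covariance). If the coupling instead requires anti-concentration of a Gaussian with the empirical covariance, I would first compare the two Gaussian laws via a Gaussian-to-Gaussian comparison and control the covariance estimation error under the $\psi_1$ bound, again absorbing it into the stated rate using Assumption~\ref{assn:sampling}.
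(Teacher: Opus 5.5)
Your proposal is essentially the paper's proof: you cut at $a=\mu/2$ to get the $\varepsilon$-free additive term $e^{-\mu^2/(8\sigma_{\max}^2)}$ plus $C\varepsilon\log p/\mu$ (rather than using \eqref{eq:finite-window-eps} itself), you feed this into the conditional coupling of Lemma~\ref{lem:onesided-bootstrap}, and you balance in $\varepsilon$. That coupling targets the fixed law $N(0,\Sigma)$, so no Gaussian-to-Gaussian comparison is needed, and its remainder $\rho_\varepsilon(X)$ does not depend on $t$, so the supremum can be taken before the expectation. Two bookkeeping points need fixing. First, the leading coupling remainder is $\varepsilon^{-2}bK^2(\log p)^{3/2}/\sqrt n$ with no $\log n$ factor, i.e.\ $R_n\asymp(b^2K^4\log^3 p/n)^{1/2}$; your stated $\log^5 p$ would give $\log^9 p$ after balancing, not $\log^7 p$. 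Second, you must check $\varepsilon^*$ against Koike's lower threshold $\varepsilon\gtrsim bK^2n^{-1/2}(\log n)(\log p)$; the paper handles this by noting that when $\varepsilon^*$ falls below it, Assumption~\ref{assn:sampling} forces the claimed bound to be $\gtrsim 1$, hence trivial.
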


Corollaries \ref{cor:nonuniform-clt}-\ref{cor:uniform-bootstrap} require no lower bound on the minimum variance $\min_j \sigma_j$, in contrast to similar results which depend explicitly on $(\min_j \sigma_j)^{-1}$ \citep{chernozhukov2014anti,chernozhukov2017central,koike2021notes}. Moreover, under the complexity condition $\mu \gg \sigma_{\max}\sqrt{\log\,p}$ (which holds for an equicorrelated field), the approximation error vanishes in any triangular array where $(\log^5 p)/n \to 0$.

\section{Proofs of main results}\label{sec:proofs}

\subsection{The master bound}\label{sec:master-bound}

We begin by stating a more general result, from which the bounds of Sections \ref{sec:nonuniform-bound}  and \ref{sec:weak-var-decay} follow quite easily. Its proof will be given in Section \ref{sec:prop1-proofs}.

\begin{proposition} \label{prop:master-bound}
  Let $Z$ be a centered Gaussian random vector in $\mathbb{R}^p$ with covariance $\Sigma$.
  Let $\vp$ be the standard normal density.
  Then, for any $u,t > 0$, it holds that
      \begin{equation}\label{eq:master-bound}
         \lim_{\delta \downarrow 0} \frac{1}{\delta}\left( \bb{P}\left\{\max_{1 \le i\le p}Z_i \le t + \delta\right\}
- \bb{P}\left\{\max_{1 \le i\le p}Z_i \le t \right\} \right) \le \left(\frac{u + t}{u^2}\right) + \sum_{0<\sigma_i < u} \frac{1}{\sigma_i} \vp\left(\frac{t}{\sigma_i}\right).
    \end{equation}
In particular, $M = \max_{1 \le i \le p} Z_i$ admits a density on $(0,\infty)$.
\end{proposition}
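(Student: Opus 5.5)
The plan is to pass to the standard Gaussian picture and prove a Gaussian surface-area bound for a polytope, keeping track of facet distances as in Nazarov's argument. Write $Z = \Sigma^{1/2}G$ with $G \sim N(0,I_p)$ and let $w_i$ be the $i$-th row of $\Sigma^{1/2}$, so that $|w_i| = \sigma_i$. Coordinates with $\sigma_i = 0$ satisfy $Z_i \equiv 0 < t$, so they never bind near level $t > 0$ and can be discarded. For the remaining coordinates put $v_i = w_i/\sigma_i$ and $r_i = t/\sigma_i$. Then
\[
\{M \le s\} = \{G \in K_s\}, \qquad K_s = \bigcap_i\{x : \langle v_i,x\rangle \le s/\sigma_i\}.
\]
Since $t > 0$, the origin lies in the interior of $K_t$ and $K_{t+\delta} = (1+\delta/t)K_t$.

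\emph{Step 1 (facet decomposition).} Write $F_i$ for the $i$-th facet of $K_t$ and $\gamma_{p-1}$ for the Gaussian surface measure. I would first bound the increment by facets:
\[
\mathbb{P}\{t < M \le t+\delta\} \le \sum_i \mathbb{P}\{t < Z_i \le t+\delta,\ Z_j \le t+\delta\ \forall j\}.
\]
Equivalently, via the radial dilation, the derivative is at most $\sum_i \sigma_i^{-1}\gamma_{p-1}(F_i)$. The limit exists by the standard co-area/Minkowski content computation for polytopes, and this also yields existence of the density on $(0,\infty)$.

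\emph{Step 2 (small-variance facets).} For $\sigma_i < u$, I would bound the $i$-th summand crudely by $\mathbb{P}\{t < Z_i \le t+\delta\}$. Dividing by $\delta$, this tends to $\sigma_i^{-1}\vp(t/\sigma_i)$ and gives exactly the sum in \eqref{eq:master-bound}. Equivalently, $\gamma_{p-1}(F_i) \le \vp(r_i)$.

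\emph{Step 3 (large-variance facets, the main obstacle).} For $\sigma_i \ge u$ we have $1/\sigma_i \le 1/u$ and $r_i \le R := t/u$. The claim then reduces to the Nazarov-type inequality
\[
\sum_{i:\, r_i \le R} \gamma_{p-1}(F_i) \le 1 + R,
\]
since $(1/u)(1+t/u) = (u+t)/u^2$. Note that for the facets in question the facet constraint may be cut by small-variance constraints too, but those only shrink $F_i$.

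To prove this inequality, I would follow the Klivans--O'Donnell--Servedio version of Nazarov's proof. For each large facet, push $F_i$ inward along $-v_i$ to form a slab $S_i = \{x - s v_i : x \in F_i,\ 0 \le s \le h\}$ with $h$ of order $1/(1+R)$. Along this motion the density changes by $\vp(x - s v_i) = \vp(x)\,e^{s r_i - s^2/2} \ge \vp(x)e^{-h^2/2}$, because $r_i > 0$, which is exactly where centering enters. This gives $\gamma(S_i) \gtrsim h\,\gamma_{p-1}(F_i)$.

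The difficulty is that the slabs $S_i$ overlap. I would handle this in the standard way: assign each point $y \in K_t$ to the facet first hit by the ray from $y$ in the direction $v_i$, and measure total overlap. Alternatively, and this is what I would try first, apply the divergence theorem on $K_t$ to the field $W(x) = \vp(x)\,x$ restricted to the relevant facets. This produces $\sum_i r_i\gamma_{p-1}(F_i)$ on one side and a bounded interior integral on the other, and I would then interpolate between the two estimates. Getting the clean constant $1+R$ rather than a worse function of $R$ is the delicate part, and I expect most of the work to go there.
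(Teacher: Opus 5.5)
Your Steps 1 and 2 coincide with the paper's argument: the coarea reduction of the derivative to $\sum_i \sigma_i^{-1}\gamma_{p-1}(F_i)$, and the hyperplane bound $\gamma_{p-1}(F_i)\le\varphi(r_i)$ when $\sigma_i<u$. Your reduction of the rest to $\sum_{i:\,r_i\le R}\gamma_{p-1}(F_i)\le 1+R$ is also correct. But that inequality is the entire content of the proposition, and neither route you sketch proves it.

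The inward slabs $S_i$ come with no dimension-free control of their overlap. Pointwise, the multiplicity can equal the number of facets: if all $r_i=\rho\le h$, the origin lies in every $S_i$, because $\rho v_i\in F_i$. So $\gamma(S_i)\gtrsim h\,\gamma_{p-1}(F_i)$ only yields $\sum_i\gamma_{p-1}(F_i)\lesssim(1+R)$ times the maximal overlap multiplicity. Bounding the Gaussian-weighted overlap directly is essentially the inequality you are trying to prove, since $h^{-1}\sum_i\gamma(S_i)\to\sum_i\gamma_{p-1}(F_i)$ as $h\downarrow 0$.

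The divergence route fails for a structural reason. For $W(x)=\varphi(x)\,x$ it gives $\sum_i r_i\,\gamma_{p-1}(F_i)=\int_{K_t}\varphi(x)(p-|x|^2)\,dx$. This puts weight $r_i\approx 0$ on exactly the facets you must control: those with the largest $\sigma_i$, for which $r_i=t/\sigma_i$ has no lower bound. Its right side is also only trivially bounded by $\mathbb{E}(p-|G|^2)_+\asymp\sqrt{p}$. No interpolation recovers $1+R$ from this.

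The missing idea, which the paper uses, is to push each facet \emph{outward}. Put $N_i=\{x+yv_i : x\in\mathrm{relint}(F_i),\ y>0\}$.
\begin{itemize}
\item For $z=x+yv_i\in N_i$ and any $w\in K_t$, one has $\langle z-x,\,w-x\rangle=y(\langle v_i,w\rangle-r_i)\le 0$. So $x$ is the nearest point of the convex set $K_t$ to $z$.
\item Hence distinct facets have disjoint columns, all contained in $K_t^c$, and $\sum_i\gamma(N_i)\le 1$.
\item Since $\langle x,v_i\rangle=r_i$ on $F_i$, the density satisfies $\varphi(x+yv_i)=\varphi(x)e^{-yr_i-y^2/2}$. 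Therefore $\gamma(N_i)=\gamma_{p-1}(F_i)\int_0^\infty e^{-yr_i-y^2/2}\,dy\ge\gamma_{p-1}(F_i)/(1+r_i)$, by the Mills-ratio bound $\int_0^\infty e^{-yr-y^2/2}\,dy\ge 2/(r+\sqrt{r^2+4})\ge 1/(1+r)$ for $r\ge 0$.
\item For $\sigma_i>u$ this gives $\sigma_i^{-1}\gamma_{p-1}(F_i)\le\sigma_i^{-1}(1+t/\sigma_i)\gamma(N_i)\le (u+t)u^{-2}\gamma(N_i)$. Summing over distinct facets yields the term $(u+t)/u^2$ with no overlap bookkeeping at all.
\end{itemize}
Disjointness here comes from the metric projection onto a convex set, which has no inward counterpart. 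That is what delivers the clean constant.
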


\subsubsection*{Proof of Proposition \ref{prop:general-finite-bound} (scale-free bound)}\label{sec:proof-prop1}

We begin by explaining how Proposition \ref{prop:master-bound} may be used to recover the main results. Note that the function $x e^{-x^2}$ is decreasing for $x \ge 1/\sqrt{2}$ as its derivative is $(1-2x^2)e^{-x^2}$. Put $x = x(\sigma) = t/(\sigma\sqrt{2})$, which is a decreasing function of $\sigma$. It follows that if $\sigma \le t$, so that $x (\sigma) \ge 1/\sqrt{2}$, then
\begin{align*}
   \frac{1}{\sigma} \exp\left(\frac{-t^2}{2\sigma^2}\right) = \sqrt{2} t^{-1}xe^{-x^2}
\end{align*}
is decreasing in $x$, hence the left-hand side is an increasing function of $\sigma$. Thus, whenever $u \le t$ the bound \eqref{eq:master-bound} simplifies to
\begin{align*}
         \lim_{\delta \downarrow 0} \frac{1}{\delta}\left( \bb{P}\left\{\max_{1 \le i\le p}Z_i \le t + \delta\right\}
- \bb{P}\left\{\max_{1 \le i\le p}Z_i \le t \right\} \right)
&\le \left(\frac{u + t}{u^2}\right) + \frac{p}{\sqrt{2\pi}}\frac{1}{u} \exp\left(\frac{-t^2}{2u^2}\right) \\
& = \frac{2\log p}{t} + \left(\sqrt{2} + \frac{1}{\sqrt \pi}\right)\frac{\sqrt{\log p}}{t} \le \frac{4\log p}{t},
    \end{align*}
where the last line follows from taking $u = (2\log p)^{-\frac{1}{2}}t$ and simplifying. This proves one of the two claimed bounds.

Next, put
\(
u' = t(2 \log p + 2\log\log p)^{-\frac 1 2}
\)
and note that for
\(
\sigma \le u',
\)
we have
\(
\sigma^{-1}\vp(t/\sigma) \le (\sqrt{\pi}\,p\,t)^{-1}.
\)
Thus, our upper bound \eqref{eq:master-bound} may be analyzed as
\begin{align}
  \left(\frac{u + t}{u^2}\right) + \sum_{\sigma_i < u} \frac{1}{\sigma_i} \vp\left(\frac{t}{\sigma_i}\right)
  &= \left(\frac{u + t}{u^2}\right) + \sum_{u > \sigma_i > u'} \frac{1}{\sigma_i} \vp\left(\frac{t}{\sigma_i}\right) + \sum_{u' \ge \sigma_i } \frac{1}{\sigma_i} \vp\left(\frac{t}{\sigma_i}\right) \\
  &\le \left(\frac{u + t}{u^2}\right) + \sum_{u > \sigma_i > u'} \frac{1}{\sigma_i} \vp\left(\frac{t}{\sigma_i}\right) + \frac{1}{\sqrt{\pi}t}. \nonumber
\end{align}
Note that optimizing the bound given by the first two summands is analogous to the computation performed in the preceding paragraph, where we now take $u = \{2\log p^*(t)\}^{-\frac{1}{2}}t$. Here, $p^*(t)$ is the number of $\sigma_i$ that exceed $u'$. This proves the second claim of Proposition \ref{prop:general-finite-bound}.

\subsubsection*{Proof of Proposition \ref{prop:weak-var-decay} (weak variance decay)}\label{sec:proof-prop3}

We adapt the argument used to prove Proposition \ref{prop:general-finite-bound}. Fix the outer threshold $u > 0$ appearing in \eqref{eq:weak} and set
\(
k := \#S_u \vee 2= \#\{i : \sigma_i \ge u\} \vee2,
\)
so that every coordinate of $S_u$ has $\sigma_i \ge u$. The master bound (Proposition \ref{prop:master-bound}) holds at an arbitrary inner threshold $v > 0$,
\begin{equation}\label{eq:prop3-master}
f(t) \le \frac{v + t}{v^2} + \sum_{\sigma_i < v} \frac{1}{\sigma_i}\,\vp\!\left(\frac{t}{\sigma_i}\right);
\end{equation}
we bound the contribution of the first term and the summands $\sigma_i \ge u$ in two ways.

First, take $v = u_* := t/\sqrt{2\log k}$. The first term of \eqref{eq:prop3-master} satisfies $(u_* + t)/u_*^2 \le 2t/u_*^2 = 4 \log k / t$, since $u_* \le t$ for $k \ge 2$. We split the remaining sum at $u$, the second sum below being empty when $u \ge u_*$:
\begin{equation}\label{eq:prop3-split}
\sum_{\sigma_i < u_*} \frac{1}{\sigma_i}\,\vp\!\left(\frac{t}{\sigma_i}\right)
\;\le\; \sum_{u \le \sigma_i < u_*} \frac{1}{\sigma_i}\,\vp\!\left(\frac{t}{\sigma_i}\right)
\;+\; \sum_{\sigma_i < u} \frac{1}{\sigma_i}\,\vp\!\left(\frac{t}{\sigma_i}\right).
\end{equation}
The function $\sigma \mapsto \sigma^{-1}\vp(t/\sigma)$ is increasing on $(0,t]$, so each of the at most $k$ summands in the first sum is bounded by $u^{-1}\vp(t/u_*) = u^{-1}/(\sqrt{2\pi}\,k)$ by the definition of $u_*$; hence that sum is at most $1/(\sqrt{2\pi}\,u)$. Thus the contribution is at most $4\log k / t + 1/(\sqrt{2\pi}\,u)$.

Second, we may optimize the inner threshold in $t$. If $t \le u\sqrt{2\log k}$, take $v = u$: no coordinate of $S_u$ enters the sum in \eqref{eq:prop3-master}, and the first term obeys $(u + t)/u^2 \le (1 + \sqrt{2\log k})/u \lesssim \sqrt{\log k}/u$, using $k \ge 2$. If instead $t > u\sqrt{2\log k}$, take $v = u_* > u$; then $(u_* + t)/u_*^2 \le 4\log k / t < 2\sqrt{2}\,\sqrt{\log k}/u$, while the at most $k$ leftover coordinates in $[u, u_*)$ contribute, exactly as above, at most $1/(\sqrt{2\pi}\,u) \le \sqrt{\log k}/u$. Either way the contribution is $\lesssim \sqrt{\log k}/u$.

Taking the smaller of the two, and recalling $k = \#S_u \vee 2$, we obtain \eqref{eq:weak} up to absolute constants. Taking the infimum over $u > 0$ completes the proof.
\qed

\subsection{Proofs from Section \ref{sec:variance-bounds} (variance bounds).}\label{sec:uniform-proofs}

In this subsection we prove Lemma \ref{lem:bathtub}, the variational bound underlying Proposition \ref{prop:variance-bound}. Lower-bounding $\Var(M)$ amounts to a variational problem over densities under the pointwise restriction $f(t) \le B/t$ supplied by Proposition \ref{prop:general-finite-bound} (with $B = 4\log p$), which is solved exactly by the tilting argument stated next.

\begin{lemma}[{Bathtub principle, \citealp[cf.][Theorem 1.14]{lieb2001analysis}}]\label{lem:bathtub-principle}
Suppose we are given a measurable upper bound $u: [0,\infty) \to [0,\infty]$ and measurable cost functions $c, w_1, \ldots, w_k : [0,\infty) \to \mathbb{R}$. Given multipliers $\lambda_1, \ldots, \lambda_k \in \mathbb{R}$, define the tilted objective $g_\lambda(x) := c(x) - \sum_{m=1}^k \lambda_m w_m(x)$. Put
\[
    h_*(t) \coloneqq 
    \begin{cases} 
        u(t) & g_\lambda(t) < 0, \\
         0 & g_\lambda(t) > 0.
    \end{cases}
\]
Then $h_*(t)$ solves the optimization
\begin{equation*}
\begin{aligned}
\mathrm{minimize} \quad & \int_0^\infty c(t) h(t)\,dt \\
\mathrm{s.t.} \quad & 0 \le h(x) \le u(x), & x > 0 \\
& \int_0^\infty w_m(x)h(x)\,dx = \int_0^\infty w_m(x) h_*(x)\,dx,  &  1 \le m \le k
\end{aligned}
\end{equation*}
over measurable functions $h(t)$. 
\end{lemma}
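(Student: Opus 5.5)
This is the classical Lagrangian/bathtub argument, carried out pointwise. Let $h$ be any measurable function that is feasible for the constraints: $0 \le h \le u$ and $\int w_m h = \int w_m h_*$ for $1 \le m \le k$. I will show $\int c\,h \ge \int c\,h_*$. Because the two functions satisfy the same moment constraints, subtracting $\sum_m \lambda_m$ times each constraint from the objective changes nothing:
\[
\int_0^\infty c(h - h_*)\,dx = \int_0^\infty g_\lambda(x)\{h(x) - h_*(x)\}\,dx .
\]
It therefore suffices to prove that the integrand $g_\lambda(h-h_*)$ is nonnegative almost everywhere.

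I will check this by splitting $[0,\infty)$ into three sets.
\begin{enumerate}[label=(\alph*)]
\item On $\{g_\lambda < 0\}$ we have $h_* = u \ge h$. Hence $h - h_* \le 0$, and the product with $g_\lambda<0$ is $\ge 0$.
\item On $\{g_\lambda > 0\}$ we have $h_* = 0 \le h$. Hence $h - h_* \ge 0$, and the product is again $\ge 0$.
\item On $\{g_\lambda = 0\}$ the integrand vanishes, whatever value $h_*$ takes there. The statement leaves $h_*$ unspecified on this set, so I will note that any measurable choice with $0\le h_*\le u$ is allowed, provided it is the one used in the constraints.
\end{enumerate}
Integrating gives $\int c\,h \ge \int c\,h_*$. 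Since $h_*$ is itself feasible (it satisfies $0 \le h_* \le u$, and the constraints hold trivially), it is a minimizer.

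The only real obstacle is integrability: splitting $\int c(h-h_*)$ into separate pieces and cancelling the constraint terms requires each integral to be finite. I would state as a standing convention that $c\,h$, $c\,h_*$, $w_m h$ and $w_m h_*$ are integrable for every admissible $h$ considered; in the application these are polynomial moments of densities bounded by $B/t$ with finite second moment. Alternatively, one can avoid the issue by using the pointwise inequality $c\,h \ge c\,h_* + \sum_m \lambda_m w_m (h - h_*)$, which holds wherever all terms are finite. Integrating it directly gives the result whenever the right-hand side is integrable, and when $\int c\,h = +\infty$ the claim is trivial.
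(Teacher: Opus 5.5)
Your proof is correct and is essentially the paper's argument: you show $g_\lambda(h-h_*)\ge 0$ pointwise on $\{g_\lambda<0\}$ and $\{g_\lambda>0\}$, then integrate, and the constraint terms cancel. Your remarks on the set $\{g_\lambda=0\}$ and on integrability make explicit points the paper leaves implicit, but they do not change the route.
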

\begin{proof}
Let $h(t)$ be any competitor. By construction, $g_\lambda(t)\,\{h(t) - h_*(t)\} \ge 0$: whenever $g_\lambda(x) < 0$ we have $h_*(x) = u(x) \ge h(x)$, and whenever $g_\lambda(x) > 0$ we have $h_*(x) = 0 \le h(x)$. Integrating and using that $\int_0^\infty w_m(t)\{h(t) - h_*(t)\}\,dt = 0$ for each $m$, we find
\begin{align*}
    0 &\le \int_0^\infty g_\lambda(t)\,\{h(t) - h_*(t)\}\,dt \\
    &= \int_0^\infty c(t)\,\{h(t) - h_*(t)\}\,dt - \sum_{m=1}^k \lambda_m \int_0^\infty w_m(t) \{h(t) - h_*(t)\}\,dt \\
    &= \int_0^\infty c(t)\,\{h(t) - h_*(t)\}\,dt. \qedhere
\end{align*}
\end{proof}

\subsubsection*{Proof of Lemma \ref{lem:bathtub}}
 Since any feasible distribution can be transported to a feasible distribution on the nonnegative half-line while reducing the cost $\mathbb E X^2$ and preserving the mean, via $X \mapsto X_+\,\mathbb E[X]/\mathbb E[X_+]$ with $X_+ = X\mathbf 1\{X\ge 0\}$, it suffices to consider the problem over distributions on the nonnegative half-line carrying mass $q$ on $(0,\infty)$ and an atom of mass $1-q$ at the origin, which affects neither the mean nor the second moment.

The extremal distribution must place its mass as close as possible to the mean while respecting the upper bound \(f(t)\le B/t\). We apply Lemma \ref{lem:bathtub-principle} with cost $c(t)=t^2$, upper bound $u(t)=B/t$, and the two constraints $w_1\equiv 1$ (mass $q$) and $w_2(t)=t$ (mean $A$). The tilt $g_\lambda(t)=t^2-\lambda_2 t-\lambda_1$ is an upward parabola, so $\{g_\lambda<0\}$ is an interval $(a,b)$ and the minimizer satisfies
\[
    f_*(t)=\frac Bt \mathbbm{1}\{a\le t\le b\}.
\]
The constants \(a,b\) are fixed by the two constraints,
\[
    \int_a^b \frac Bt\,dt=B\log(b/a)=q,
    \qquad
    \int_a^b t\,\frac Bt\,dt=B(b-a)=A.
\]
Rearranging yields
\[
    a=\frac{A}{B(e^{q/B}-1)},
    \qquad
    b=a\,e^{q/B}=\frac{Ae^{q/B}}{B(e^{q/B}-1)}.
\]
The second moment of the extremal distribution is
\[
    \mathbb E X_*^2
    =
    \int_a^b t^2\frac Bt\,dt
    =
    \frac B2(b^2-a^2).
\]
After plugging in the values of $a$ and $b$, a further manipulation yields
\[
    \mathbb E X_*^2
    =
    \frac{A^2}{2B}
    \coth\!\left(\frac{q}{2B}\right),
\]
and hence any such distribution satisfies
\[
    \operatorname{Var}(X)
    \ge
    \operatorname{Var}(X_*)
    =
    \mathbb E X_*^2-A^2
    =
    A^2\left[
        \frac{1}{2B}\coth\!\left(\frac{q}{2B}\right)-1
    \right]
    \ge
    A^2\left[
        \frac{1}{2B}\coth\!\left(\frac{1}{2B}\right)-1
    \right],
\]
the final inequality because \(q\le 1\) and \(\coth\) is decreasing. If \(B\ge 1\), the lower bound $1/(13B^2)$ then follows by monotonicity of $u^{-2}(u\coth u - 1)$. 
\qed


\subsection{Proof of Proposition \ref{prop:master-bound} (master bound)} \label{sec:prop1-proofs}

Let $Z \in \mathbb{R}^p$ be a Gaussian random vector with covariance $\Sigma$ of rank $s \le p$. Then, we may write $Z = Ag$ for $g$ a standard Gaussian vector in $\mathbb{R}^{s}$, where $A$ is a $p \times s$ real matrix such that $AA^*=\Sigma$.
Note that the $i^\text{th}$ row of $A$, which we denote $a_i$, satisfies $\|a_i\|_2^2 = \sigma_i^2$, and we write its normalization as $\nu_i = a_i/\|a_i\|_2$. We have for any $t, \delta > 0$,
\begin{align}
\mathbb{P}\left\{\max_{1 \le i\le p}Z_i > t + \delta\right\}
&= \mathbb{P}\left\{\max_{1 \le i\le p} \bk{a_i}{g} > t + \delta\right\} \nonumber \\
&=\mathbb{P}\left\{\max_{1 \le i\le p} \bk{\frac{a_i}{\|a_i\|_2}}{g} > \frac{t}{\|a_i\|_2} + \frac{\delta}{\|a_i\|_2}\right\} \nonumber \\
&=
\mathbb{P}\left\{
\max_{1 \le i\le p} \bk{\nu_i}{g} > \sigma_i^{-1}t
+\sigma_i^{-1}\delta\right\}. \label{eq:standardize-polytope}
\end{align}

 Without loss of generality, we may assume that all $\sigma_i$ are strictly positive, as the remaining coordinates can be dropped from the event in \eqref{eq:standardize-polytope}; since the final bound increases in the number of inequalities, $p$, this is harmless. We use the following Lemma \ref{lem:derivative-reduction}, mirroring the standard reduction to Nazarov's inequality in \citep{chernozhukov2016empirical}. For completeness, its proof is given in the appendix.

\begin{lemma}\label{lem:derivative-reduction}
    Let $K \subset \mathbb{R}^{s}$ denote the polytope defined by the inequalities $ \bk{\nu_i}{x} \le t/\sigma_i$ for $1 \le i \le p$, and let $F_i \subset \partial K$ be the facet contained in the hyperplane $H_i$ defined by $\bk{\nu_i}{x} = t/\sigma_i$. Then, for any $t, \delta > 0$,
    \begin{equation}\label{eq:inhomogeneous-decomp}
        \lim_{\delta \downarrow 0} \frac{1}{\delta}\left( \bb{P}\left\{\max_{1 \le i\le p}Z_i \le t + \delta\right\}
- \bb{P}\left\{\max_{1 \le i\le p}Z_i \le t \right\} \right) \le \sum_{i=1}^p \frac{1}{\sigma_i} \int_{F_i} \vp_s(x) \mathscr{H}^{s-1}(dx)
,
    \end{equation}
where $\mathscr{H}^{s-1}$ is the $(s-1)$-dimensional Hausdorff measure on  $\partial K \subset \mathbb{R}^{s}$.
\end{lemma}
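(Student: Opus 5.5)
The plan is to follow the standard reduction to Nazarov's inequality, as in \citet{chernozhukov2016empirical}, and express the increment of the distribution function as a difference of Gaussian measures of nested polytopes. By \eqref{eq:standardize-polytope}, $\mathbb{P}\{\max_i Z_i \le t+\delta\} = \gamma_s(K_\delta)$, where $\gamma_s$ is the standard Gaussian measure on $\mathbb{R}^s$ and
\[
K_\delta = \{x : \bk{\nu_i}{x} \le (t+\delta)/\sigma_i,\ 1 \le i \le p\},
\]
so that $K_0 = K$. The difference we must control is therefore $\gamma_s(K_\delta \setminus K)$.

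\textbf{Step 1: union bound over slabs.} Any $x \in K_\delta \setminus K$ violates at least one defining inequality of $K$ while satisfying all of those of $K_\delta$. Hence there is an index $i$ with $t/\sigma_i < \bk{\nu_i}{x} \le (t+\delta)/\sigma_i$ and $\bk{\nu_j}{x} \le (t+\delta)/\sigma_j$ for every $j$. It follows that
\[
K_\delta\setminus K \subset \bigcup_i S_i^\delta, \qquad S_i^\delta = \{x \in K_\delta : t/\sigma_i < \bk{\nu_i}{x} \le (t+\delta)/\sigma_i\}.
\]

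\textbf{Step 2: Fubini on each slab.} Decompose $x = r\nu_i + y$ with $y \perp \nu_i$, so that $\vp_s(x) = \vp(r)\,\vp_{s-1}(y)$. Then
\[
\gamma_s(S_i^\delta) = \int_{t/\sigma_i}^{(t+\delta)/\sigma_i} \vp(r)\, \mathscr{H}^{s-1}_{\vp_{s-1}}\bigl(C_i(r,\delta)\bigr)\,dr,
\]
where $C_i(r,\delta) = \{y \perp \nu_i : r\nu_i + y \in K_\delta\}$ is the section at height $r$. Dividing by $\delta$, the interval has length $\delta/\sigma_i$, which produces the factor $1/\sigma_i$.

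\textbf{Step 3: passage to the limit.} As $\delta \downarrow 0$ and $r \downarrow t/\sigma_i$, the sections $C_i(r,\delta)$ decrease to the closed set $\{y : (t/\sigma_i)\nu_i + y \in K\}$, which is exactly $F_i$ translated to the hyperplane $H_i$. By dominated convergence in the $y$-integral, using the bound $\vp_{s-1} \le$ an integrable envelope, the weighted section measure converges to $\int_{F_i}\vp_s\,d\mathscr{H}^{s-1}$. This limit equals $\vp(t/\sigma_i)$ times the $(s-1)$-dimensional Gaussian measure of the section. Summing over $i$ gives \eqref{eq:inhomogeneous-decomp}; the $\limsup$ of the sum of slab bounds is then at most the stated right-hand side.

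\textbf{Main obstacle.} The delicate point is the monotone convergence of the sections. For $r$ slightly above $t/\sigma_i$ and small $\delta$, the set $C_i(r,\delta)$ includes points near, but not in, the facet, because the other constraints are relaxed by $\delta/\sigma_j$. I would handle this by noting that $\bigcap_{\delta>0}\bigcup_{r\in(t/\sigma_i,(t+\delta)/\sigma_i]} C_i(r,\delta)$ is contained in the closed section at $r = t/\sigma_i$. This containment holds by continuity of the linear constraints, and is exactly why the right-hand side must be written with the closed facet $F_i$. Degenerate cases, such as duplicate directions $\nu_i = \nu_j$ or empty facets, only make the bound larger or the term zero, so they cause no trouble.
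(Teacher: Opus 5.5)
Your proof is correct, but it takes a different route from the paper.

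\textbf{The paper's route.} It writes $M = u(g)$ for the piecewise-linear map $u(x) = \max_i \bk{a_i}{x}$ and applies the coarea formula to $\int \mathbbm{1}\{u(x) \le t\}\vp_s(x)\,dx$, using $|\nabla u| = \sigma_i$ on the relative interior of $F_i^t$. It then differentiates in $t$ by the fundamental theorem of calculus. This gives an \emph{identity} $\sum_{i \in I}\sigma_i^{-1}\int_{F_i^t}\vp_s\,d\mathscr{H}^{s-1}$ over the non-redundant facets; the inequality only enters when the redundant facets are added back.

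\textbf{Your route.} You cover $K_\delta \setminus K$ by slabs and extract the factor $\delta/\sigma_i$ by Fubini in coordinates adapted to $\nu_i$. You then pass to the limit through upper semicontinuity of the Gaussian measure of the sections. The argument under ``Main obstacle'' is exactly what is needed: the projections $\bigcup_{r} C_i(r,\delta)$ decrease in $\delta$, their intersection lies in the closed section over $H_i$, and $\vp(r) \le \vp(t/\sigma_i)$ for $r > t/\sigma_i > 0$. That argument also repairs the loose phrasing in your Step 3, since the individual sections $C_i(r,\delta)$ are not monotone in $r$ when $\bk{\nu_j}{\nu_i} < 0$.

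\textbf{What each buys.} Your argument is more elementary and needs no geometric measure theory. It bounds the upper right Dini derivative directly at each fixed $t > 0$. The paper's appeal to the fundamental theorem of calculus tacitly uses continuity of $r \mapsto \int_{\partial K^r}\vp_s/|\nabla u|\,d\mathscr{H}^{s-1}$ at $r = t$. This continuity is true here, since $K^r = rK^1$ for $r > 0$, but the paper does not verify it; coarea alone identifies the derivative only for almost every $t$. In return, the paper's route gives an exact formula for the density rather than a union-bound inequality, although only the inequality is used downstream. Finally, because you control a $\limsup$, you still need continuity of the distribution function on $(0,\infty)$ to conclude that $M$ has a density, as claimed in Proposition \ref{prop:master-bound}. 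That continuity holds because $\mathbb{P}\{Z_i = t\} = 0$ whenever $\sigma_i > 0$ and $t \neq 0$.
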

\begin{proof}
    See Appendix \ref{apx:proofs}.
\end{proof}

 The crucial observation in the proof is that the facets $F_i$ in \eqref{eq:inhomogeneous-decomp} are far away from the origin {precisely when} the corresponding $\sigma_i$ are small. Accounting for this correspondence drastically improves the final bound. 
 
 Following \cite{chernozhukov2017detailed}, for a set $S \subset \mathbb{R}^s$, we may define $d(x,S) = \inf_{y \in S} \|x-y\|$ and, for any facet $F_i$ as defined in Lemma \ref{lem:derivative-reduction}, we define its outward normal column as
\[N_i = \Set[x + y\nu_i]{x \in \mathrm{relint}(F_i), y > 0}.\]
We also have the following estimates, which are taken from the proof of Nazarov's inequality given by \cite{chernozhukov2017detailed}. Details are provided in Appendix \ref{apx:proofs}.

\begin{lemma}\label{lem:disjoint}
The sets $N_i=\Set[x + y\nu_i]{x \in \mathrm{relint}(F_i), y > 0}$ are disjoint, so $\sum_{i}\gamma_s(N_i) \le 1$.
\end{lemma}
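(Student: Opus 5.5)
The plan is to show disjointness directly from the convexity of $K$ and the definition of the relative interiors of facets. The Gaussian measure bound is then immediate, since disjoint measurable sets in $\mathbb{R}^s$ have total standard Gaussian measure at most $\gamma_s(\mathbb{R}^s)=1$. We may assume, as in the reduction preceding Lemma \ref{lem:derivative-reduction}, that the hyperplanes $H_i$ are distinct: duplicated inequalities define the same facet and can be dropped, or such facets are assigned to a single index. We may also assume that each $F_i$ is a genuine $(s-1)$-dimensional facet, since otherwise $\mathrm{relint}(F_i)$ in the sense relevant to \eqref{eq:inhomogeneous-decomp} is empty or negligible and $N_i$ may be taken empty.

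The key step is to identify the nearest-point projection onto $K$. Let $\pi_K$ denote the metric projection onto the closed convex set $K$. For $z = x + y\nu_i$ with $x \in \mathrm{relint}(F_i)$ and $y>0$, I claim $\pi_K(z) = x$. By the variational characterization, this amounts to $\bk{z - x}{k - x} \le 0$ for all $k \in K$. That inequality reads $y\bk{\nu_i}{k - x} \le 0$, and it holds because $\bk{\nu_i}{k} \le t/\sigma_i = \bk{\nu_i}{x}$. So every point of $N_i$ projects into $\mathrm{relint}(F_i)$. Moreover, $z - \pi_K(z) = y\nu_i$ lies in the normal cone of $K$ at $x$.

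Next I show that the normal cone at a relative interior point of a facet is exactly the ray spanned by $\nu_i$. Suppose $z \in N_i \cap N_j$ with $i \ne j$. Uniqueness of the projection gives $x = \pi_K(z) \in \mathrm{relint}(F_i) \cap \mathrm{relint}(F_j)$, and it gives $y\nu_i = z - x = y'\nu_j$ with $y,y'>0$. Since $\|\nu_i\| = \|\nu_j\| = 1$, this forces $\nu_i = \nu_j$ and $y = y'$. Then $t/\sigma_i = \bk{\nu_i}{x} = \bk{\nu_j}{x} = t/\sigma_j$, so $H_i = H_j$, contradicting distinctness. The argument does not even need facet geometry beyond $x \in F_i \cap F_j$.

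The only delicate point is the bookkeeping for repeated or degenerate constraints, where two indices define the same hyperplane. That is handled by the deduplication described above, which only lowers the right-hand side of \eqref{eq:inhomogeneous-decomp}. Given disjointness, summing gives $\sum_i \gamma_s(N_i) = \gamma_s\bigl(\bigcup_i N_i\bigr) \le 1$. The measurability of each $N_i$ is clear, since it is the image of the product $\mathrm{relint}(F_i)\times(0,\infty)$ under an affine bijection onto an open subset of $\mathbb{R}^s$.
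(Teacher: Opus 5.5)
Your proposal is correct and follows the paper's route: the variational inequality $\langle z-x, k-x\rangle \le 0$ for all $k \in K$ identifies $x$ as the unique nearest point of $K$ to $z$, so a point common to $N_i$ and $N_j$ would have a single projection lying in both facets. Your closing step differs only slightly. Where the paper appeals to disjointness of the relative interiors of distinct facets, you compare $y\nu_i = y'\nu_j$ to force $H_i = H_j$, and in doing so you make explicit the deduplication of repeated hyperplanes that the paper leaves implicit in the phrase ``distinct facets.''
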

\begin{lemma}\label{lem:basic-surface-area-bounds}
For any facet $F_i \subset \partial K$ as defined in Lemma \ref{lem:derivative-reduction}, we have
\begin{align}
    \int_{F_i} \vp_s(x) \mathscr{H}^{s-1}(dx)  \le [1+d(0,H_i)]\gamma_s(N_i)\, \t{ and } \label{eq:conic-bound}\\
    \int_{F_i} \vp_s(x) \mathscr{H}^{s-1}(dx)  \le \frac{1}{\sqrt{2\pi}}\exp\left\{-\frac{d(0,H_i)^2}{2}\right\}.\label{eq:isoperimetric-bound}
\end{align}
\end{lemma}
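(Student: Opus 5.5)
The plan is to prove both bounds by direct integration over the hyperplane $H_i$, writing points of $H_i$ in coordinates adapted to $\nu_i$. Set $h_i \coloneqq d(0,H_i) = t/\sigma_i > 0$; this is valid because $H_i=\{x:\bk{\nu_i}{x}=t/\sigma_i\}$ and $\|\nu_i\|_2=1$. Every $x \in H_i$ decomposes as $x = h_i\nu_i + x^\perp$ with $x^\perp \perp \nu_i$. Hence $\|x\|^2 = h_i^2 + \|x^\perp\|^2$, and for $y \in \mathbb{R}$,
\[
\vp_s(x + y\nu_i) = \vp_s(x)\exp\left(-h_i y - \tfrac{y^2}{2}\right).
\]

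For \eqref{eq:isoperimetric-bound}, we use $F_i \subset H_i$ and the nonnegativity of $\vp_s$ to enlarge the domain:
\[
\int_{F_i}\vp_s\,d\mathscr{H}^{s-1} \le \int_{H_i}\vp_s\,d\mathscr{H}^{s-1}.
\]
By rotation invariance we may take $\nu_i = e_1$. Then $\vp_s(x)=\vp(x_1)\,\vp_{s-1}(x_2,\ldots,x_s)$, and $H_i=\{x_1=h_i\}$. The integral over the hyperplane therefore factors as $\vp(h_i)\int_{\mathbb{R}^{s-1}}\vp_{s-1} = (2\pi)^{-1/2}e^{-h_i^2/2}$, which is the claimed bound.

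For \eqref{eq:conic-bound}, consider the map $\Psi:\mathrm{relint}(F_i)\times(0,\infty)\to N_i$ given by $\Psi(x,y) = x+y\nu_i$. It is a bijection by the definition of $N_i$, and it is injective because distinct $(x,y)$ give distinct points: the value of $y$ is recovered as $\bk{\nu_i}{\Psi(x,y)} - h_i$. Since $\nu_i$ is a unit normal to $H_i$, $\Psi$ is an isometric product parametrization with unit Jacobian. The relative boundary of $F_i$ is $\mathscr{H}^{s-1}$-null, so Fubini gives
\[
\gamma_s(N_i) = \int_{F_i}\int_0^\infty \vp_s(x+y\nu_i)\,dy\,\mathscr{H}^{s-1}(dx) = R(h_i)\int_{F_i}\vp_s\,d\mathscr{H}^{s-1},
\]
where $R(h) = \int_0^\infty e^{-hy-y^2/2}\,dy = (1-\Phi(h))/\vp(h)$ is the Mills ratio.

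The remaining step, which I expect to be the only real obstacle, is the lower bound $R(h)\ge 1/(1+h)$ for $h\ge 0$. I would take it from the classical Birnbaum--Sampford inequality $R(h) \ge 2/(h+\sqrt{h^2+4})$. Since $\sqrt{h^2+4}\le h+2$, this yields $R(h)\ge 2/(2h+2)=1/(1+h)$.

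If a self-contained argument is preferred, one can instead use Jensen's inequality. Write $R(h)=\int_0^\infty e^{-hy}\,\mu(dy)$ with $\mu(dy)=e^{-y^2/2}\,dy$, which has total mass $\sqrt{\pi/2}$. Jensen applied to the probability measure $\mu/\sqrt{\pi/2}$ gives $R(h)\ge \sqrt{\pi/2}\,\exp(-h\,m)$, where $m = \int_0^\infty y\,e^{-y^2/2}\,dy\big/\sqrt{\pi/2} = \sqrt{2/\pi}$. Comparing this with $1/(1+h)$ for all $h\ge 0$ still takes a short calculus check, so I would simply cite the classical inequality.

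Rearranging the Fubini identity then gives
\[
\int_{F_i}\vp_s\,d\mathscr{H}^{s-1} = \frac{\gamma_s(N_i)}{R(h_i)} \le (1+h_i)\,\gamma_s(N_i),
\]
which is \eqref{eq:conic-bound}.
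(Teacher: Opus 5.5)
Your proof is correct. The argument for \eqref{eq:isoperimetric-bound} is exactly the paper's: enlarge $F_i$ to $H_i$, rotate so that $\nu_i=e_1$, and factor the density.

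For \eqref{eq:conic-bound} the paper gives no argument. It cites Nazarov, Klivans--O'Donnell--Servedio, and Lemma~2 of Chernozhukov--Chetverikov--Kato. Your Fubini computation over the normal column is the standard argument behind those citations, written out. It also proves slightly more than required. After rotation, $N_i$ is the product $(h_i,\infty)\times\mathrm{relint}(U_i)$, which gives the exact identity $\int_{F_i}\vp_s\,d\mathscr{H}^{s-1}=\gamma_s(N_i)\,\vp(h_i)/\{1-\Phi(h_i)\}$. So \eqref{eq:conic-bound} reduces to the one-dimensional inequality $\vp(h)\le(1+h)\{1-\Phi(h)\}$, and the Mills-ratio lower bound $R(h)\ge 2/(h+\sqrt{h^2+4})$ together with $\sqrt{h^2+4}\le h+2$ delivers it.

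One correction: drop the Jensen aside, because the ``short calculus check'' fails. Jensen gives $R(h)\ge\sqrt{\pi/2}\,e^{-h\sqrt{2/\pi}}$, which decays exponentially, while $1/(1+h)$ decays only like $1/h$. Already at $h=2$,
\[
3\sqrt{\pi/2}\,e^{-2\sqrt{2/\pi}}\approx 0.76<1 .
\]

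If you want a self-contained proof of the Mills-ratio step, integrate by parts instead. Since $\int_0^\infty (h+y)e^{-hy-y^2/2}\,dy=1$, you get $1=\{h+m(h)\}R(h)$, where $m(h)$ is the mean of the density proportional to $e^{-hy-y^2/2}$ on $(0,\infty)$. This is an exponential family in $h$, so $m'(h)=-\mathrm{Var}_h(Y)\le 0$. Hence $m(h)\le m(0)=\sqrt{2/\pi}\le 1$, which gives $R(h)\ge 1/(h+\sqrt{2/\pi})\ge 1/(1+h)$.
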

Finally, we apply the main novel idea in the proof: facets
$F_i$ for which $\sigma_i$ is small (corresponding to $i \in S_1$ below) cannot contribute much to the sum on the right hand side of inequality \eqref{eq:inhomogeneous-decomp}.
To do so, we partition $\sigma_i$ into two complementary subsets $S_1(u)$ and $S_2(u)$ defined by:
\begin{align*}
    S_1(u) &\coloneqq \set{i \in [p]: \sigma_i \le u}; \\
    S_2(u) &\coloneqq \set{i \in [p]:  \sigma_i > u }.
\end{align*}
Crucially, note that the distance between the hyperplane $H_i$ and the origin satisfies
\[d(0,H_i) =t/{\sigma_i}\]
by construction. For $i \in S_1(u)$, inequality \eqref{eq:isoperimetric-bound} implies that
\begin{align*}
   \frac{1}{\sigma_i}\int_{F_i} \varphi_s(x) \mathscr{H}^{s-1}(dx) \le \frac{1}{\sigma_i\sqrt{2\pi}}\exp\left\{-\frac{t^2}{2\sigma_i^2}\right\} = \frac{1}{\sigma_i}\vp\left(\frac{t}{\sigma_i}\right),
\end{align*}
so that
\[\sum_{i\in S_1(u)}\frac{1}{\sigma_i}\int_{F_i} \varphi_s(x) \mathscr{H}^{s-1}(dx) \le \sum_{i\in S_1(u)}\frac{1}{\sigma_i}\vp\left(\frac{t}{\sigma_i}\right).\]
Meanwhile, for $i \in S_2(u)$, it follows by \eqref{eq:conic-bound} that
\begin{align*}
\sum_{i\in S_2(u)}\frac{1}{\sigma_i}\int_{F_i} \varphi_s(x) \mathscr{H}^{s-1}(dx)
&\le \sum_{i\in S_2(u)} \frac{1}{\sigma_i}[1 + d(0,H_i)]\gamma_s(N_i) \\
&\le  \frac{u + t}{u^2} \sum_{i\in S_2(u)} \gamma_s(N_i) \le \frac{u + t}{u^2}.
\end{align*}
This completes the proof.


\section{Conclusion}\label{sec:conclusion}

This paper established a scale-free density bound for the maximum of a centered Gaussian vector: on $(0,\infty)$ the density satisfies $f(t) \le 4\log p / t$, with no restriction on the covariance. From this we derived uniform anti-concentration and variance bounds for the maximum, control of its density at extremal quantiles, and Gaussian and bootstrap approximation guarantees for high-dimensional maxima that remain valid under degeneracy.

Our density bounds enter the approximation results of Section~\ref{sec:applications} only as a drop-in replacement for Nazarov's inequality within the coupling of \citet{koike2021notes}; it would be of interest to incorporate our anti-concentration results more intrinsically into Gaussian comparison and coupling arguments, as was done by \citep{chernozhukov2022improved} in the non-degenerate case.

\bibliographystyle{plainnat}
\bibliography{bibliography}



\begin{appendix}

    \onehalfspacing

\section{Additional proofs of density bounds}\label{apx:proofs}

\subsection{Proofs from Section \ref{sec:prop1-proofs} (Proposition \ref{prop:master-bound})}

We provide proofs of the various lemmas used in proving Proposition \ref{prop:master-bound}. These results are fairly standard, and the proofs follow closely after \citet{chernozhukov2017detailed}.

\subsubsection{Proof of Lemma \ref{lem:derivative-reduction}}

We begin by restating Lemma \ref{lem:derivative-reduction} to keep track of its dependence on the parameter $t\in \mathbb{R}$.

\begin{lemma}\label{lem:derivative-reduction-apx}
    For $t > 0$, let $K^t \subset \mathbb{R}^s$ denote the polytope defined by the inequalities $ \bk{\nu_i}{x} \le t/\sigma_i$ for $1 \le i \le p$ and let $F_i^t \subset \partial K^t$ be the (possibly redundant) facet contained in the hyperplane $H_i^t$ defined by $\bk{\nu_i}{x} = t/\sigma_i$. Then
    \begin{equation}\label{eq:inhomogeneous-decomp-apx}
        \lim_{\delta \downarrow 0} \frac{1}{\delta}\left( \bb{P}\left\{\max_{1 \le i\le p}Z_i \le t + \delta\right\}
- \bb{P}\left\{\max_{1 \le i\le p}Z_i \le t \right\} \right) \le \sum_{i=1}^p \frac{1}{\sigma_i} \int_{F_i^t} \vp_s(x)\, \mathscr{H}^{s-1}(dx)
,
    \end{equation}
where $\mathscr{H}^{s-1}$ denotes the $(s-1)$-dimensional Hausdorff measure on  $\partial K^t \subset \mathbb{R}^s$.
\end{lemma}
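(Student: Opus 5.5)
We rewrite the probability geometrically and then differentiate the Gaussian measure of a family of polytopes whose facets all move outward. By \eqref{eq:standardize-polytope}, $\mathbb{P}\{\max_i Z_i \le t\} = \gamma_s(K^t)$. Since all $\sigma_i>0$, for $\delta>0$ we have $K^t \subset K^{t+\delta}$, so the difference quotient is $\delta^{-1}\gamma_s(K^{t+\delta}\setminus K^t)$.

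The key step is a covering of the shell $K^{t+\delta}\setminus K^t$. Each point $x$ of the shell violates at least one inequality defining $K^t$. Let $y$ be its Euclidean projection onto the closed convex set $K^t$. Then $x - y$ lies in the normal cone of $K^t$ at $y$, which is generated by the $\nu_i$ of the constraints active at $y$. For a generic configuration, up to a set we must show is negligible, $y$ lies in the relative interior of a single facet $F_i^t$, and $x = y + r\nu_i$ for some $r>0$. Because $\langle \nu_i, x\rangle \le (t+\delta)/\sigma_i$, we get $r \le \delta/\sigma_i$. Hence the shell is contained in
\[
\bigcup_i \{y + r\nu_i : y \in F_i^t,\ 0<r\le \delta/\sigma_i\}
\]
together with the set $E_\delta$ of points whose projection lands on a face of dimension at most $s-2$.

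The measure of each slab is estimated by Fubini in the coordinates $(y,r)$, which form an isometric parametrization of a straight prism:
\[
\int_{F_i^t}\int_0^{\delta/\sigma_i} \vp_s(y+r\nu_i)\,dr\,\mathscr{H}^{s-1}(dy).
\]
Dividing by $\delta$ and letting $\delta \downarrow 0$, continuity of $\vp_s$ and dominated convergence ($\vp_s$ is bounded and $F_i^t$ has finite $\mathscr{H}^{s-1}$-measure locally; one may truncate to a large ball, since the Gaussian tails are integrable) give exactly $\sigma_i^{-1}\int_{F_i^t}\vp_s\,d\mathscr{H}^{s-1}$. Redundant or empty facets contribute zero or a nonnegative amount, which is harmless for an upper bound.

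The main obstacle is controlling $E_\delta$. These are points of the shell whose nearest point in $K^t$ lies on a lower-dimensional face $G$. Such points lie within distance $O(\delta/\min_i\sigma_i)$ of $G$ and inside its normal cone. The normal cone of a codimension-$k$ face is $k$-dimensional, and the relevant points have normal component of size $O(\delta)$. So locally the Gaussian measure of $E_\delta$ is $O(\delta^2)$ times the $(s-2)$-dimensional measure of the faces, after again truncating to a ball of radius $R$ and letting $R\to\infty$ with Gaussian tails.

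Consequently $\delta^{-1}\gamma_s(E_\delta)\to0$. Combining this with the slab estimate, and taking a $\limsup$ if the limit is not yet known to exist, yields \eqref{eq:inhomogeneous-decomp-apx}. The existence of the density on $(0,\infty)$, used in Proposition \ref{prop:master-bound}, then follows from the finiteness of this bound.
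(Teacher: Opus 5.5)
Your argument is correct in substance, but it takes a different route from the paper.

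\textbf{The paper's route.} The paper never works with the shell $K^{t+\delta}\setminus K^t$. It applies the coarea formula to the Lipschitz function $u(x)=\max_i\langle a_i,x\rangle$, whose gradient is $a_i$ (of norm $\sigma_i$) where the $i$th form is maximal. This gives $\gamma_s(K^t)=\int_{-\infty}^t\int_{\partial K^r}\varphi_s/|\nabla u|\,d\mathscr{H}^{s-1}\,dr$, which it then differentiates in $t$. In that route the lower-dimensional faces cost nothing, since they are $\mathscr{H}^{s-1}$-null in each level set. Absolute continuity is built into the identity. One also obtains an actual limit, equal to the sum over non-redundant facets, modulo continuity in $r$ of the inner integral (immediate from $K^r=rK^1$).

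\textbf{What your route buys.} Your projection-and-prism decomposition avoids geometric measure theory and uses the same geometry as the normal columns of Lemma~\ref{lem:disjoint}. It also gives a non-asymptotic facet estimate. Indeed, $\langle y,\nu_i\rangle=t/\sigma_i>0$ on $H_i^t$ gives $\varphi_s(y+r\nu_i)\le\varphi_s(y)$ for $r\ge 0$. So each prism has Gaussian mass at most $(\delta/\sigma_i)\int_{F_i^t}\varphi_s\,d\mathscr{H}^{s-1}$ for every $\delta>0$, with no dominated-convergence step.

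\textbf{What needs tightening.} The price is the exceptional set $E_\delta$, where two of your claims need proof.
\begin{enumerate}
\item The uniform bound $|x-y|=O(\delta)$ follows from Hoffman's error bound for the system defining $K^t$. Alternatively, use pointedness of the normal cones of the full-dimensional set $K^t$.
\item The truncation, as you phrase it, is not enough. Discarding $E_\delta\setminus B_R$ needs a bound of the form $\delta\,\eta(R)$ with $\eta(R)\to 0$, not merely $\gamma_s(B_R^c)$. Such a bound comes from the slabs $\{t/\sigma_i<\langle\nu_i,x\rangle\le(t+\delta)/\sigma_i\}$, which cover the shell. Simpler still, $E_\delta$ lies in the $C\delta$-neighbourhoods of the affine hulls of faces of codimension at least two. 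Each of these has Gaussian measure $O(\delta^2)$, so no truncation is needed.
\end{enumerate}

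Finally, you bound only the upper right Dini derivative. That suffices for the inequality, but not by itself for the existence of a density: $\mathbbm{1}\{t\ge t_0\}$ has zero right Dini derivatives everywhere. You should add continuity of $t\mapsto\gamma_s(K^t)$ on $(0,\infty)$, which holds because $\mathbb{P}\{M=c\}\le\sum_i\mathbb{P}\{Z_i=c\}=0$ for $c>0$.
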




\proof[Proof of Lemma \ref{lem:derivative-reduction-apx}]{First, put $u(x) = \max_{1 \le i \le p} \bk{a_i}{x}$; following the discussion at the beginning of Section \ref{sec:prop1-proofs}, the law of $\max_{1 \le i \le p} Z_i$ coincides with that of $u(g)$ for a standard Gaussian vector $g \in \mathbb{R}^s$. Note that $u$ is Lipschitz, hence differentiable Lebesgue-almost everywhere by Rademacher's theorem; this is all the coarea formula below requires. The boundary $\partial K^t$ coincides with the level set $\{x\mid u(x) = t\}$, and, on the interior of the facet $F_i^t \subset K^t$, the gradient of $u$ satisfies $\nabla u = a_i = \nu_i\sigma_i$ where $\nu_i$ is the outward unit normal vector to $F_i^t$. Thus the problem reduces to computing 
\begin{align*}
    \frac{d}{dt} \mathbb{P}\{u(g) \le t\} &= \frac{d}{dt} \int_{\mathbb{R}^s} \mathbbm{1}\{u(x) \le t\}\varphi_s(x)\,dx.
\intertext{Thus, the standard coarea integration formula {\citep[Theorem 3.2.12]{federer2014geometric}} gives}
  &= \frac{d}{dt} \int_{-\infty}^t \int_{\partial K^r} \frac{\varphi_s(x)}{|\nabla u(x)|}\,\mathscr{H}^{s-1}(dx)\,dr.
  \intertext{By the fundamental theorem of calculus, this simplifies to}
  &=  \int_{\partial K^t} \frac{\varphi_s(x)}{|\nabla u(x)|}\,\mathscr{H}^{s-1}(dx).
\intertext{Finally, since the intersection of any two facets has $(s-1)$-dimensional Hausdorff measure zero, this becomes a sum over non-redundant facets $F_i^t$ (indexed by $I \subset [p]$)}
    &=  \sum_{i \in I} \frac{1}{\sigma_i} \int_{F_i^t} \varphi_s(x)\,\mathscr{H}^{s-1}(dx).
\end{align*}
Finally, we may add back redundant facets, as this only increases the sum.
}
\subsubsection{Proofs of Lemmas \ref{lem:disjoint} and \ref{lem:basic-surface-area-bounds}}

\smallskip

\proof[Proof of Lemma \ref{lem:disjoint}]{
For any $x \in N_i$ we may write $x = u + y \nu_i$ for $y > 0$ and $u \in \mathrm{relint}(F_i)$. Therefore, we have
\[\bk{x}{\nu_i} = \bk{u}{\nu_i} + y = \frac{t}{\sigma_i} + y > \frac{t}{\sigma_i} ,\] from which we conclude that the hyperplane $H_i=\{x: \bk{\nu_i}{x}=t/\sigma_i\}$ separates $K$ from $x$. Moreover, for any $z \in K$ we have $\bk{x-u}{z-u} = y\,\bk{\nu_i}{z-u} = y\big(\bk{\nu_i}{z} - t/\sigma_i\big) \le 0$, since $\bk{\nu_i}{z} \le t/\sigma_i = \bk{\nu_i}{u}$ for $z \in K$; this is precisely the variational characterization of the Euclidean projection, so $u$ is the unique nearest point of $K$ to $x$. Consequently, if $x \in N_i \cap N_j$ for distinct facets, its projection $u$ would lie in both $\mathrm{relint}(F_i)$ and $\mathrm{relint}(F_j)$, which is impossible; hence the $N_i$ are disjoint.}

\proof[Proof of Lemma \ref{lem:basic-surface-area-bounds}]{
The bound \eqref{eq:conic-bound} is due to \citet{nazarov2004maximal}, and is well-known (see also equation (5) of \citet{klivans2008learning}). As stated, it is implied by Lemma 2 of \citet{chernozhukov2017detailed}.

We prove \eqref{eq:isoperimetric-bound}; essentially, the Gaussian surface area of $F_i$ is smaller than that of the hyperplane $H_i$, which is well-known. Notice that both $\varphi_s(-)$ and $\mathscr{H}^{s-1}(-)$ are invariant to orthogonal transformations of $\mathbb{R}^s$. Therefore, without any loss of generality, we can assume that $H_i$ has the parameterization $\ (t/\sigma_i, u_1, \ldots, u_{s-1})$ for $u \in \mathbb{R}^{s-1}$, with $F_i$ corresponding to a subset $U_i \subset  \mathbb{R}^{s-1}$, and $N_i$ corresponding to $(t/\sigma_i + r, u_1, \ldots, u_{s-1})$ for $u \in U_i$ and $r > 0$. Then
\begin{align*}
\int_{F_i} \varphi_s(y) \mathscr{H}^{s-1}(dy)
&\le \int_{H_i} \varphi_s(x) \mathscr{H}^{s-1}(dx) \\
\intertext{since $F_i \subset H_i$ and the integrand is positive. Given our parameterization of $H_i$, this has the form}
&= \vp_1(t/\sigma_i) \int_{\mathbb{R}^{s-1}} \vp_{s-1}(u)\,du = \vp_1(t/\sigma_i).
\end{align*}
Since $d(0,H_i) = t/\sigma_i$ by construction, \eqref{eq:isoperimetric-bound} is proved.
}

\section{Proofs of approximation results}\label{sec:application-proofs}

The Gaussian and bootstrap approximation results stated in Section \ref{sec:applications} follow by combining the density bounds of Section \ref{sec:main-technical} with the coupling results of \citet{koike2021notes}, in the notation fixed at the start of Section \ref{sec:applications}.

\subsection{One-sided couplings}\label{sec:onesided-couplings}

The proofs rely on the following one-sided Prokhorov-type couplings of \citet{koike2021notes}, which we restate for convenience. The first compares $S_n$ to its Gaussian analog $Z \sim N(0,\Sigma)$.

\begin{lemma}[Lemma 5.7 of \citet{koike2021notes}]\label{lem:onesided-prokhorov}
Assume that there exists a constant \(K \ge 1\) such that for all $1 \le i \le n$ and $1 \le j \le p$, \(\|X_{ij}\|_{\psi_1} \le K.\)
Then there exists a universal constant \(C>0\) such that for every
Borel set \(A \subset \mathbb{R}\), every \(y \in \mathbb{R}^p\), and every
\(
\varepsilon \ge CK^2 n^{-\nicefrac{1}{2}}(\log n)(\log p),
\)
we have
\begin{equation}\label{eq:gaussian-onesided}
\begin{split}
\mathbb{P}\!\left\{\max_{1 \le j \le p}(S_{n})_j - y_j \in A\right\}
&\le \mathbb{P}\!\left\{\max_{1 \le j \le p} Z_j - y_j \in A^{6\varepsilon}\right\}\\
&\qquad + C\varepsilon^{-2}\!\left(\frac{K^{2}(\log p)^{3/2}}{\sqrt{n}} + \frac{K^{4}(\log p)^2(\log n)^2}{n}\right),
\end{split}
\end{equation}
where $A^\delta = \cup_{a\in A} [a-\delta,a+\delta]$ denotes the $\delta$-enlargement of $A$.
\end{lemma}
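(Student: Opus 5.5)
Since this is a restatement of Lemma~5.7 of \citet{koike2021notes}, my plan is to reconstruct its proof in the form we need rather than derive it from the density bounds of Section~\ref{sec:main-technical}. It is a pure coupling statement and uses no anti-concentration. Fix $y$, a Borel set $A$ and $\varepsilon$.

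\textbf{Step 1: smooth the test function.} I would replace $\mathbbm{1}\{\max_j (x_j - y_j) \in A\}$ by a smooth surrogate $h(x) = g\big(F_\beta(x-y)\big)$. Here
\[
F_\beta(w) = \beta^{-1}\log\textstyle\sum_j e^{\beta w_j}
\]
is the soft-max with $\beta = \log p/\varepsilon$, so that $0 \le F_\beta(w) - \max_j w_j \le \varepsilon$. The function $g$ is a mollification of $\mathbbm{1}_{A^{2\varepsilon}}$ at scale $\varepsilon$. It satisfies $\mathbbm{1}_{A^{\varepsilon}} \le g \le \mathbbm{1}_{A^{5\varepsilon}}$ and $\|g^{(k)}\|_\infty \lesssim \varepsilon^{-k}$. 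Then
\[
\mathbb{P}\{\max_j (S_n)_j - y_j \in A\} \le \mathbb{E}h(S_n)
\qquad\text{and}\qquad
\mathbb{E}h(Z) \le \mathbb{P}\{\max_j Z_j - y_j \in A^{6\varepsilon}\}.
\]
It therefore suffices to bound $\mathbb{E}h(S_n) - \mathbb{E}h(Z)$ by the stated remainder.

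\textbf{Step 2: the standard soft-max calculus.} This gives, in the $\ell^1$-operator sense,
\[
\sum_{j,k}|\partial_{jk} h| \lesssim \varepsilon^{-2} + \beta\varepsilon^{-1} \lesssim \varepsilon^{-2}\log p,
\qquad
\sum_{j,k,l}|\partial_{jkl} h| \lesssim \varepsilon^{-3}\log^2 p.
\]
A naive Lindeberg replacement would pay the third-derivative factor $\varepsilon^{-3}$.

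\textbf{Step 3: the randomized Lindeberg step.} To obtain $\varepsilon^{-2}$ I would follow the randomized Lindeberg/Stein construction of \citet{chernozhukov2017central} as refined by Koike. The swapping is done along a random index, and the Gaussian part $Z$ smooths the test function. Concretely, write $\mathbb{E}h(Z) = \mathbb{E}\,\tilde h(\cdot)$ with $\tilde h$ a Gaussian convolution, and use a Stein identity. This lets one derivative be carried by the Gaussian density rather than by $g$. The third-order Taylor remainder then becomes a second-derivative bound times
\[
\max_{j,k,l} \tfrac1n\textstyle\sum_i \mathbb{E}|X_{ij}X_{ik}X_{il}|\,\|X_i\|_\infty/\sqrt n .
\]
This yields the main term $\varepsilon^{-2}K^2(\log p)^{3/2}/\sqrt n$, using $\mathbb{E}\max_j|X_{ij}|^m \lesssim (K\log p)^m$ from the $\psi_1$ bound.

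\textbf{Step 4: truncation.} The sub-exponential tails are handled by truncating each $X_{ij}$ at level $\asymp K\log(np)$. Two things come out of this. The truncation at this level forces the threshold $\varepsilon \ge CK^2 n^{-1/2}\log n\log p$, because the Taylor expansion is only valid when single summands $X_i/\sqrt n$ move $F_\beta$ by less than $\varepsilon$; that is, $\beta\|X_i\|_\infty/\sqrt n \lesssim 1$ on the truncation event. The off-event contributes the second remainder $\varepsilon^{-2}K^4(\log p)^2(\log n)^2/n$.

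The main obstacle is Step 3: arranging the interpolation so that only $\varepsilon^{-2}$, not $\varepsilon^{-3}$, appears, while keeping the dimension dependence at $(\log p)^{3/2}$. I would follow Koike's one-sided (Prokhorov-type) argument directly, since the two-sided Kolmogorov versions invoke Nazarov's inequality, which we must avoid.
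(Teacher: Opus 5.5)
Your Step~3 carries all the content, and it does not work as sketched. Letting ``the Gaussian density carry one derivative'' amounts to writing $\partial_j\mathbb{E}h(x+Z)=\mathbb{E}[h(x+Z)(\Sigma^{-1}Z)_j]$. That identity is unavailable for singular $\Sigma$, which is exactly the case this paper needs. When $\Sigma$ is invertible, it introduces a $\Sigma^{-1}$-dependent factor that a universal $C$ cannot absorb. Smoothing by an auxiliary Gaussian does not help either: keeping the enlargement $O(\varepsilon)$ forces its scale to be $O(\varepsilon)$, so the derivative it carries costs at least the $\varepsilon^{-1}$ you were trying to save. According to the paper's account of Koike, the third-order term is handled differently there. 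A third-moment-matching randomized Lindeberg interpolation is combined with a Stein-kernel (Slepian-type) comparison. That comparison needs only $\sum_{j,k}|\partial_{jk}h|\lesssim\varepsilon^{-2}\log p$ and integrates by parts against $\Sigma$ itself, never $\Sigma^{-1}$, which is why the bound holds for every $\Sigma\succeq 0$.

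Note also that the paper does not reprove this lemma. It cites Koike's Lemma~5.7 and only checks, in Remark~\ref{rmk:Bn}, that Koike's constant $B_n$ can be taken $\asymp K^2$. Here $B_n$ must dominate both $\max_{i,j}\|X_{ij}\|_{\psi_1}$ and $(\max_j n^{-1}\sum_i\mathbb{E}X_{ij}^4)^{1/2}$, and the check uses $\mathbb{E}X_{ij}^4\le 24K^4$ and $K\le K^2$. If you fall back on ``following Koike directly,'' that translation is the step you must supply, and your proposal omits it.

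Your bookkeeping also does not give the stated remainder. The quantity $\max_{j,k,l}n^{-1}\sum_i\mathbb{E}|X_{ij}X_{ik}X_{il}|\,\|X_i\|_\infty$ is generically of order $K^4\log p$. Combined with the factor $\varepsilon^{-2}\log p$, it gives $\varepsilon^{-2}K^4(\log p)^2/\sqrt n$, with the wrong powers of both $K$ and $\log p$. In the lemma, $K^2$ is $B_n$, the root of an averaged single-coordinate fourth moment with no $\log p$ attached. Koike's remainder has the form $\varepsilon^{-2}\{\Delta_0\log p+\Delta_1\sqrt{(\log p)^3/n}\}$ (cf.\ \eqref{eq:wb-conditional}). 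Here $\Delta_1=(n^{-1}\mathbb{E}\max_j\sum_i\tilde X_{ij}^4)^{1/2}$ for variables truncated at $\kappa_n\asymp K^2\log n$. The maximal inequality $\mathbb{E}\max_j\sum_i\tilde X_{ij}^4\lesssim nB_n^2+\kappa_n^4\log p$ then reproduces precisely $K^2(\log p)^{3/2}/\sqrt n$ and $K^4(\log p)^2(\log n)^2/n$.

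The $p$-free truncation level is also what produces the stated threshold. With your level $K\log(np)$, the stability condition $\beta\kappa/\sqrt n\lesssim 1$ becomes $\varepsilon\gtrsim K\log(np)\log p/\sqrt n$. The lemma's threshold $CK^2n^{-1/2}(\log n)(\log p)$ does not imply this when $\log p\gg K\log n$.
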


\begin{remark}\label{rmk:Bn}
Koike's bound is stated in terms of a constant $B_n \ge 1$ bounding both $\max_{i,j}\|X_{ij}\|_{\psi_1}$ and $(\max_j n^{-1}\sum_i \mathbb{E}[X_{ij}^4])^{1/2}$. We bound the fourth moment using the $\psi_1$-norm, so $B_n \asymp K^2$; this is the source of the powers $K^2$ and $K^4$ in \eqref{eq:gaussian-onesided}.
\end{remark}

The corresponding one-sided coupling for the multiplier bootstrap statistic $S_n^{\mathrm{WB}}$ was also derived by \citet{koike2021notes}.

\begin{lemma}[Bootstrap version of \citealp{koike2021notes}, Thm.~3.1]\label{lem:onesided-bootstrap}
Under the assumptions of Lemma \ref{lem:onesided-prokhorov}, with multipliers $(w_i)_{i=1}^n$ as in Section \ref{sec:app-bootstrap} and $b \ge 1$ such that $|w_i| \le b$ almost surely, there exists a universal constant $C' > 0$ such that, for every $\varepsilon \ge C'bK^2 n^{-\nicefrac{1}{2}}(\log n)(\log p)$, there exist an event $\Omega_\varepsilon$ with $\mathbb{P}(\Omega_\varepsilon) = 1$ and a nonnegative random variable $\rho_\varepsilon(X)$, depending on $X$ but not on the choice of $A$ or $y$, with
\begin{equation}\label{eq:bootstrap-remainder}
\mathbb{E}[\rho_\varepsilon(X)] \le C'\varepsilon^{-2}\!\left(\frac{b\, K^{2}(\log p)^{3/2}}{\sqrt{n}} + \frac{b^2 K^{4}(\log p)^2(\log n)^2}{n}\right),
\end{equation}
such that, on $\Omega_\varepsilon$, for every Borel set $A \subset \mathbb{R}$ and every $y \in \mathbb{R}^p$,
\begin{equation}\label{eq:bootstrap-onesided}
\mathbb{P}\!\left\{\max_{1 \le j \le p}(S_n^{\mathrm{WB}})_j - y_j \in A\,\middle|\, X\right\}
\le \mathbb{P}\!\left\{\max_{1 \le j \le p} Z_j - y_j \in A^{6\varepsilon}\right\} + \rho_\varepsilon(X).
\end{equation}
\end{lemma}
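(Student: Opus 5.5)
The plan is to work conditionally on $X$ and split the comparison into two stages. In the first stage, given $X$, $S_n^{\mathrm{WB}}$ is a sum of independent centered vectors $\xi_i = n^{-1/2}w_i(X_i-\bar X_n)$, whose conditional covariance is the centered empirical covariance $\hat\Sigma = n^{-1}\sum_i (X_i-\bar X_n)(X_i-\bar X_n)^\top$. We compare its conditional law to $N(0,\hat\Sigma)$. In the second stage we compare $N(0,\hat\Sigma)$ to $Z\sim N(0,\Sigma)$. Each stage is expressed as a one-sided Prokhorov-type inequality of the form in Lemma \ref{lem:onesided-prokhorov}, with enlargement $3\varepsilon$ per stage, so the total enlargement is $A^{6\varepsilon}$. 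This is exactly the route of \citet[Thm.~3.1]{koike2021notes}. Since we only need the statement as a restatement, the proof consists mostly of verifying that his hypotheses hold with $B_n\asymp K^2$ (Remark \ref{rmk:Bn}) and that the multiplier bound $b$ enters linearly in the $\sqrt n$ term and quadratically in the $n^{-1}$ term.

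\textbf{First stage.} For fixed $X$, apply the randomized Lindeberg coupling underlying Lemma \ref{lem:onesided-prokhorov} to the $\xi_i$. The same smoothing argument goes through: approximate $\mathbbm{1}\{\max_j(\cdot)_j-y_j\in A\}$ by a smooth function of a soft-max at temperature $\asymp\varepsilon/\log p$, then run a Lindeberg swap with third-moment and truncation remainders. It yields a remainder
\[
\rho^{(1)}_\varepsilon(X)\lesssim \varepsilon^{-2}\Big(\tfrac{(\log p)^{3/2}}{\sqrt n}\,b\,\hat L_3 + \tfrac{(\log p)^2(\log n)^2}{n}\,b^2\hat L_4\Big),
\]
where $\hat L_3,\hat L_4$ are empirical third and fourth moment functionals of $\max_j|X_{ij}-\bar X_{nj}|$. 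Because the smoothing function depends on $A$ and $y$ only through a translation and an enlargement, and the moment bounds are uniform in these, $\rho^{(1)}_\varepsilon$ does not depend on $(A,y)$. The probability-one event $\Omega_\varepsilon$ appears only because of measurability and the choice of a regular conditional distribution.

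\textbf{Second stage.} Use the Gaussian-to-Gaussian comparison, either by Slepian interpolation or by the coupling form in Koike. Applied to the same smoothed indicator, it gives an additive error $\rho^{(2)}_\varepsilon(X)\lesssim \varepsilon^{-2}\log p\,\|\hat\Sigma-\Sigma\|_{\max}$, again uniform in $(A,y)$. We then set $\rho_\varepsilon=\rho^{(1)}_\varepsilon+\rho^{(2)}_\varepsilon$ and take expectations. To bound $\mathbb{E}\|\hat\Sigma-\Sigma\|_{\max}$, use the sub-exponential maximal inequality for products, which are $\psi_{1/2}$ with norm $\le K^2$. This gives a bound of order $K^2\sqrt{\log p/n}+K^2(\log p)(\log n)^2/n$, so after multiplying by $\varepsilon^{-2}\log p$ it matches the right side of \eqref{eq:bootstrap-remainder}.

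\textbf{Main obstacle.} The hardest step is controlling $\mathbb{E}[\rho^{(1)}_\varepsilon(X)]$. The conditional Lindeberg remainder involves $\max_{i,j}|X_{ij}|$, which is of order $K\log(np)$, through the truncation. A crude bound would lose extra logarithms. I would first try Koike's truncation at level $\asymp\varepsilon/\log p$ on the summands. The lower restriction $\varepsilon\ge C'bK^2n^{-1/2}(\log n)(\log p)$ is what makes $b\max_{i}\|X_i-\bar X_n\|_\infty/\sqrt n$ fall below this level except on an event whose probability is absorbed into the $n^{-1}$ term. On the complementary event, the empirical moments $\hat L_3,\hat L_4$ have expectations bounded by the population quantities, which are $\lesssim K^2$ and $K^4$, up to the centering by $\bar X_n$. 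The centering contributes only lower-order terms, since $\|\bar X_n\|_\infty\lesssim K\sqrt{\log p/n}$ in expectation.
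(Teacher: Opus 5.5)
Your two-stage architecture is a legitimate alternative to the paper's route. You couple $S_n^{\mathrm{WB}}$ conditionally to $N(0,\hat\Sigma)$, then compare $N(0,\hat\Sigma)$ with $N(0,\Sigma)$ by a Slepian-type argument at cost $\varepsilon^{-2}\log p\,\|\hat\Sigma-\Sigma\|_{\max}$. The paper instead makes a single application of Koike's Lemma~4.1 with target $N(0,\Sigma)$ and absorbs the covariance mismatch into $\Delta^*_{n,0}$.

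The gap is the truncation step. You claim the restriction $\varepsilon\ge C'bK^2n^{-1/2}(\log n)(\log p)$ forces $b\max_i\|X_i-\bar X_n\|_\infty/\sqrt n\lesssim\varepsilon/\log p$ outside an event absorbed into the $n^{-1}$ term. It does not. On the scale of $X$, that restriction only guarantees the level $\varepsilon\sqrt n/(b\log p)\gtrsim K^2\log n$, while $\max_{i,j}|X_{ij}|$ is typically of order $K\log(np)$. When $\log p\gg K\log n$, your good event can have probability near zero at admissible $\varepsilon$ for which the right side of \eqref{eq:bootstrap-remainder} is still below one. Markov then bounds the bad event only by $\varepsilon^{-2}b^2K^2(\log p)^2\log^2(np)/n$, with $\log^2(np)$ where the lemma has $K^2(\log n)^2$. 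The same defect enters your second stage. For the untruncated $\hat\Sigma$, $\max_j(\hat\Sigma_{jj}-\Sigma_{jj})$ is driven by the single largest entry, which contributes $n^{-1}\max_{i,j}X_{ij}^2\asymp K^2\log^2(np)/n$. So the bound $K^2\sqrt{\log p/n}+K^2(\log p)(\log n)^2/n$ you state for $\mathbb{E}\|\hat\Sigma-\Sigma\|_{\max}$ can fail when $p$ is large relative to $n$.

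The missing idea is the paper's truncation split, made before any coupling or covariance comparison. Replace $X_{ij}$ by its centered truncation $\tilde X_{ij}$ at the fixed level $\kappa_n=2K^2\log n$, pay an $A^{\varepsilon}$ enlargement as in \eqref{eq:wb-split}, and control only the aggregate residual event $\{\|S_n^{w(Y-\tilde Y)}\|_\infty>\varepsilon\}$. Its tolerance is $\varepsilon$, not your per-summand $\varepsilon/\log p$. Since $\log n+\log p\le 2\log n\log p$ for $n,p\ge 3$, the lower restriction gives $\varepsilon\sqrt n/b\gtrsim K^2\log(np)$, so a few large entries do not spoil this event. The coupling is then applied to summands with the deterministic bound $|w_i\tilde Y_{ij}|\le 4b\kappa_n$; this, not a high-probability event, is what the threshold on $\varepsilon$ encodes. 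Every remaining empirical quantity, including $\Delta^*_{n,0}$ (the analogue of your $\|\hat\Sigma-\Sigma\|_{\max}$), involves variables bounded by $\kappa_n$, and that is where the $(\log n)^2$ in \eqref{eq:bootstrap-remainder} comes from. As written, your route proves only a version with $\log(np)$-type factors, which is weaker than the stated lemma, though the loss is harmless downstream when $b^2\log^5 p\lesssim n$.

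Separately, the $\varepsilon^{-2}(\log p)^{3/2}n^{-1/2}$ term is not what a third-order Lindeberg swap produces; that gives $\varepsilon^{-3}(\log p)^2n^{-1/2}$. It comes from Koike's third-moment-matching randomized Lindeberg argument with a Stein-kernel estimate. Its remainder is the fourth-moment quantity $\Delta^*_{n,1}$, not a third-moment functional, so that step should be cited rather than rederived as you sketch it.
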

\begin{proof}
The statement is the one-sided, pre-optimization form of \citet[Theorem 3.1(a)]{koike2021notes}; since we use the intermediate coupling rather than its optimized conclusion, we recall the steps from the proof in \citet[Sect.~6.1]{koike2021notes}. We abbreviate the remainders
\[
\delta_{n,1} = \frac{B_n^2(\log p)^3}{n}, \qquad \delta_{n,2} = \frac{B_n^2(\log p)^2(\log n)^2}{n},
\]
and use $B_n \asymp K^2$ (Remark \ref{rmk:Bn}); thus $\sqrt{\delta_{n,1}} \asymp K^2(\log p)^{3/2}/\sqrt n$ and $\delta_{n,2} \asymp K^4(\log p)^2(\log n)^2/n$ are the two terms in \eqref{eq:bootstrap-remainder}. Set $\kappa_n = 2K^2\log n$ and, exactly as in the truncation underlying Lemma \ref{lem:onesided-prokhorov} \citep[Lemma 5.7]{koike2021notes}, put $\tilde X_{ij} = X_{ij}\mathbbm{1}\{|X_{ij}|\le\kappa_n\} - \mathbb{E}[X_{ij}\mathbbm{1}\{|X_{ij}|\le\kappa_n\}]$ and $\tilde Y_i = \tilde X_i - \bar{\tilde X}_n$, so that $\max_{i,j}|w_i\tilde Y_{ij}|\le 4b\kappa_n$ almost surely.

\paragraph{\emph{Truncation split.}} Writing $Y_i = X_i - \bar X_n$ (so that $S_n^{wY} := n^{-1/2}\sum_i w_i Y_i = S_n^{\mathrm{WB}}$), for every Borel $A$ and $y \in \mathbb{R}^p$,
\begin{equation}\label{eq:wb-split}
\mathbb{P}\!\left\{\max_j (S_n^{\mathrm{WB}})_j - y_j \in A \,\middle|\, X\right\}
\le \mathbb{P}\!\left\{\max_j (S_n^{w\tilde Y})_j - y_j \in A^{\varepsilon} \,\middle|\, X\right\}
+ \mathbb{P}\!\left\{\|S_n^{w(Y-\tilde Y)}\|_\infty > \varepsilon \,\middle|\, X\right\}.
\end{equation}

\paragraph{\emph{Conditional coupling.}} Conditionally on $X$, the vectors $(w_i\tilde Y_i)_{i=1}^n$ are independent and centered, with conditional covariance $n^{-1}\sum_i \mathbb{E}[w_i^2]\,\tilde Y_i\tilde Y_i^\top = n^{-1}\sum_i \tilde Y_i\tilde Y_i^\top$ since $\mathbb{E}[w_i^2]=1$. Applying the coupling of \citet[Lemma 4.1]{koike2021notes}\textemdash proved through the third-moment-matching randomized Lindeberg interpolation of \citet[Proposition 4.1]{koike2021notes} together with a Stein-kernel estimate\textemdash to $(w_i\tilde Y_i)$ given $X$, with Gaussian target $Z \sim N(0,\Sigma)$, $\Sigma = \mathbb{E}[S_n S_n^\top]$, yields, for each fixed $\varepsilon$ above the stated threshold, a probability-one event $\Omega_\varepsilon$ on which the following holds for every Borel $A$ and every $y$:
\begin{equation}\label{eq:wb-conditional}
\mathbb{P}\!\left\{\max_j (S_n^{w\tilde Y})_j - y_j \in A^{\varepsilon} \,\middle|\, X\right\}
\le \mathbb{P}\!\left\{\max_j Z_j - y_j \in A^{6\varepsilon}\right\}
+ C\varepsilon^{-2}\!\left(\Delta^*_{n,0}\log p + \Delta^*_{n,1}\sqrt{\tfrac{(\log p)^3}{n}}\right),
\end{equation}
where the $5\varepsilon$ enlargement of \citet[Lemma 4.1]{koike2021notes} turns $A^\varepsilon$ into $A^{6\varepsilon}$, and the conditional remainder quantities are
\[
\Delta^*_{n,0} = \mathbb{E}\!\left[\max_{1\le j,k\le p}\Big|\tfrac1n\textstyle\sum_i w_i^2\tilde Y_{ij}\tilde Y_{ik} - \Sigma_{jk}\Big| \,\middle|\, X\right],
\qquad
\Delta^*_{n,1} = \left(\tfrac1n\,\mathbb{E}\!\left[\max_j \textstyle\sum_i w_i^4\tilde Y_{ij}^4 \,\middle|\, X\right]\right)^{1/2}.
\]
Crucially, $\Delta^*_{n,0}$ and $\Delta^*_{n,1}$ depend on $X$ but not on $(A,y)$, so \eqref{eq:wb-conditional} holds for all $(A,y)$ simultaneously on $\Omega_\varepsilon$ with an $A$-independent error; this is what later permits taking a supremum over $t$ before taking the expectation.

\paragraph{\emph{Truncation error.}} Since $|w_i|\le b$ and $\sqrt n(\bar X_n - \bar{\tilde X}_n) = S_n^{X-\tilde X}$, the second term of \eqref{eq:wb-split} is bounded exactly as the truncation estimate underlying Lemma \ref{lem:onesided-prokhorov} \citep[Eq.~(25) and Sect.~6.1]{koike2021notes}:
\[
\mathbb{P}\!\left\{\|S_n^{w(Y-\tilde Y)}\|_\infty > \varepsilon \,\middle|\, X\right\} \le C\varepsilon^{-2} b^2\delta_{n,2}.
\]
Combining the three displays gives \eqref{eq:bootstrap-onesided} on $\Omega_\varepsilon$ with the nonnegative, $A$-independent remainder
\[
\rho_\varepsilon(X) = \mathbb{P}\!\left\{\|S_n^{w(Y-\tilde Y)}\|_\infty > \varepsilon \,\middle|\, X\right\} + C\varepsilon^{-2}\!\left(\Delta^*_{n,0}\log p + \Delta^*_{n,1}\sqrt{\tfrac{(\log p)^3}{n}}\right).
\]
The threshold $\varepsilon \ge C'bK^2 n^{-1/2}(\log n)(\log p)$ corresponds to the requirement $\varepsilon \ge 12\,b\kappa_n(\log p)/\sqrt n$, ensuring the truncation level is compatible with the coupling.

\paragraph{\emph{Remainder in expectation.}} Taking expectations over $X$ bounds each conditional term. As $\mathbb{E}[w_i^2]=1$, the conditional mean of $n^{-1}\sum_i w_i^2\tilde Y_{ij}\tilde Y_{ik}$ is the truncated sample covariance, whose deviation from $\Sigma$ is controlled by the fourth-moment concentration of \citet[Lemma 5.5]{koike2021notes} and a maximal inequality for $\|\bar X_n\|_\infty$; the computation of \citet[Sect.~6.1]{koike2021notes} gives $\mathbb{E}[\Delta^*_{n,0}]\log p \lesssim b\sqrt{\delta_{n,1}} + b^2\delta_{n,2}$. For $\Delta^*_{n,1}$, the bound $\mathbb{E}[w_i^4]\le b^2\mathbb{E}[w_i^2] = b^2$ together with the Jensen and Lyapunov inequalities reduces it to the Gaussian fourth-moment term, giving $\mathbb{E}[\Delta^*_{n,1}]\sqrt{(\log p)^3/n}\lesssim b\sqrt{\delta_{n,1}} + b^2\delta_{n,2}$. Collecting the three contributions yields $\mathbb{E}[\rho_\varepsilon(X)] \le C'\varepsilon^{-2}(b\sqrt{\delta_{n,1}} + b^2\delta_{n,2})$, which is \eqref{eq:bootstrap-remainder}.
\end{proof}

\subsection{Proof of Corollaries \ref{cor:nonuniform-clt}--\ref{cor:uniform-bootstrap}}\label{sec:proof-clt-bootstrap}

The signed corollaries are proved by combining the one-sided couplings of Lemmas \ref{lem:onesided-prokhorov}--\ref{lem:onesided-bootstrap} with the density bounds of Section \ref{sec:main-technical}; the unsigned corollaries are treated at the end. We write the argument for the Gaussian case; the bootstrap argument is identical after replacing Lemma \ref{lem:onesided-prokhorov} with Lemma \ref{lem:onesided-bootstrap} and tracking the additional factor of $b$.

\smallskip

\noindent\emph{Step 1: a two-sided Prokhorov bound.}
Lemma \ref{lem:onesided-prokhorov} provides a one-sided bound; to derive a two-sided bound on a CDF difference, we apply it twice with $y = 0$. First, taking $A = (-\infty, t]$ in \eqref{eq:gaussian-onesided}, we obtain
\begin{equation}\label{eq:clt-onesided-upper}
\mathbb{P}\!\left\{\max_j (S_n)_j \le t\right\} - \mathbb{P}\!\left\{\max_j Z_j \le t\right\}
\le \mathbb{P}\!\left\{t < \max_j Z_j \le t + 6\varepsilon\right\} + R_n(\varepsilon),
\end{equation}
where $R_n(\varepsilon) := C\varepsilon^{-2}\!\left(K^{2}(\log p)^{3/2}/\sqrt{n} + K^{4}(\log p)^2(\log n)^2/n\right)$. Second, taking $A = (t,\infty)$, we get
\begin{equation}\label{eq:clt-onesided-lower} \begin{split}
\mathbb{P}\!\left\{\max_j Z_j \le t - 6\varepsilon\right\} - \mathbb{P}\!\left\{\max_j (S_n)_j \le t\right\}
&\le R_n(\varepsilon). \end{split}
\end{equation}
Combining \eqref{eq:clt-onesided-upper} and \eqref{eq:clt-onesided-lower} yields
\begin{equation}\label{eq:two-sided-bound}
\left|\mathbb{P}\!\left\{\max_j (S_n)_j \le t\right\} - \mathbb{P}\!\left\{\max_j Z_j \le t\right\}\right|
\le \mathbb{P}\!\left\{t - 6\varepsilon < \max_j Z_j \le t + 6\varepsilon\right\} + R_n(\varepsilon).
\end{equation}

\smallskip

\noindent\emph{Step 2: anti-concentration of $\max_j Z_j$.}
The first term on the right-hand side of \eqref{eq:two-sided-bound} is controlled by the density of $\max_j Z_j$ on $[t-6\varepsilon, t+6\varepsilon]$. Throughout, $C$ denotes a universal constant whose value may change between occurrences. We distinguish two cases.

\paragraph{\emph{Case I (Corollaries \ref{cor:nonuniform-clt} and \ref{cor:nonuniform-bootstrap}, $t = t_q^\Sigma$ for $q \ge \nicefrac{2}{3}$).}}
By stochastic dominance, $t_q^\Sigma \ge q^M_{\nicefrac{2}{3}} \ge \sigma_{\mathrm{max}}\Phi^{-1}(\nicefrac{2}{3}) \ge \nicefrac{2 \sigma_{\mathrm{max}}}{5}$ for every $q \ge \nicefrac{2}{3}$. Hence, provided $\varepsilon \le \nicefrac{\sigma_{\mathrm{max}}}{40}$, the interval $[t-6\varepsilon, t+6\varepsilon]$ lies above $\nicefrac{\sigma_{\mathrm{max}}}{4}$, and Proposition \ref{prop:general-finite-bound} gives $f(s) \le C\log p/\sigma_{\mathrm{max}}$ there, so
\[
\mathbb{P}\!\left\{t-6\varepsilon < \max_j Z_j \le t+6\varepsilon\right\} \le \frac{C\,\varepsilon \log p}{\sigma_{\mathrm{max}}}.
\]

\paragraph{\emph{Case II (Corollaries \ref{cor:uniform-clt} and \ref{cor:uniform-bootstrap}, $t \in \mathbb{R}$ arbitrary).}}
Splitting $[t-6\varepsilon, t+6\varepsilon]$ at the fixed cut $a = \mu/2$, as in the proof of Corollary \ref{cor:finite-window} but without tying the cut to the window width, the mass below $a$ is at most $e^{-\mu^2/8\sigma_{\mathrm{max}}^2}$ by \eqref{eq:borell-tis}, while above $a$ the density is at most $C\log p/\mu$ by \eqref{eq:general-finite-bound}. Hence
\[
\mathbb{P}\!\left\{t-6\varepsilon < \max_j Z_j \le t+6\varepsilon\right\} \le e^{-\mu^2/8\sigma_{\mathrm{max}}^2} + \frac{C\,\varepsilon\log p}{\mu}.
\]
The additive Gaussian-tail term does not shrink with the coupling scale $\varepsilon$; it is independent of $n$ and depends only on the ratio $\mu/\sigma_{\mathrm{max}}$.

\smallskip

\paragraph{\emph{Step 3: optimization in $\varepsilon$.}}
Write $\nu = \sigma_{\mathrm{max}}$, $P = 0$ in Case I, and $\nu = \mu$, $P = e^{-\mu^2/8\sigma_{\mathrm{max}}^2}$ in Case II. Combining \eqref{eq:two-sided-bound} with Step 2, for every admissible $\varepsilon$,
\begin{equation}\label{eq:pre-optimization}
\left|\mathbb{P}\!\left\{\max_j (S_n)_j \le t\right\} - \mathbb{P}\!\left\{\max_j Z_j \le t\right\}\right|
\le P + C\Big(\frac{\varepsilon \log p}{\nu} + \varepsilon^{-2}\Delta_1 + \varepsilon^{-2}\Delta_2\Big),
\end{equation}
where $\Delta_1 = K^2(\log p)^{3/2}/\sqrt n$ and $\Delta_2 = K^4(\log p)^2(\log n)^2/n$ are the two terms of $R_n(\varepsilon)$. Balancing the first two non-penalty terms gives the optimal scale
\[
\varepsilon^* \asymp \nu^{1/3} K^{2/3} (\log p)^{1/6} n^{-1/6},
\qquad
\frac{\varepsilon^*\log p}{\nu} + \varepsilon^{*-2}\Delta_1 \asymp R := \nu^{-2/3}\Big(\frac{K^4 \log^7 p}{n}\Big)^{1/6}.
\]
Two observations complete the proof. First, the remaining term obeys 
\[\varepsilon^{*-2}\Delta_2 = R \cdot K^2(\log p)^{1/2}(\log n)^2 n^{-1/2} \le R\] by Assumption~\ref{assn:sampling} (with $b = 1$). Second, the optimal $\varepsilon^*$ may be inadmissible\textemdash either $\varepsilon^* > \sigma_{\mathrm{max}}/40$ (violating the requirement of Case~I), or $\varepsilon^*$ below the lower threshold $\asymp\sqrt{\Delta_2}$ of Lemma \ref{lem:onesided-prokhorov}\textemdash but in either case $R$ exceeds a universal positive constant (in the first, $R \ge (\log p)/40$; in the second, Assumption~\ref{assn:sampling} forces $R \gtrsim 1$), so the asserted bound holds trivially because the left-hand side is at most $1$. In all cases,
\[
\left|\mathbb{P}\!\left\{\max_j (S_n)_j \le t\right\} - \mathbb{P}\!\left\{\max_j Z_j \le t\right\}\right| \lesssim P + R.
\]
Taking the supremum over $t$\textemdash over $t \ge q^M_{\nicefrac{2}{3}}$ in Case~I ($P = 0$), and over $t \in \mathbb{R}$ in Case~II\textemdash gives Corollaries \ref{cor:nonuniform-clt} and \ref{cor:uniform-clt}; in Case~II the displayed rate dominates $P$ precisely when $\mu \gtrsim \sigma_{\mathrm{max}}\sqrt{\log n}$.

For the bootstrap corollaries we argue from the coupling of Lemma \ref{lem:onesided-bootstrap}. For each fixed admissible $\varepsilon$, on the probability-one event $\Omega_\varepsilon$, the two-sided bound \eqref{eq:two-sided-bound} holds for every $t$ simultaneously with $R_n(\varepsilon)$ replaced by the $A$-independent remainder $\rho_\varepsilon(X)$. Since $\rho_\varepsilon(X)$ does not depend on $t$, we may take the supremum over $t$ on $\Omega_\varepsilon$ before taking the expectation, after which $\mathbb{E}[\rho_\varepsilon(X)]$ obeys \eqref{eq:bootstrap-remainder}. As the remainder carries an extra factor $b$ in $\Delta_1$ and $b^2$ in $\Delta_2$, the same optimization yields $\sigma_{\mathrm{max}}^{-2/3}(b^2 K^4\log^7 p/n)^{1/6}$, and the analogue with $\mu$, giving Corollaries \ref{cor:nonuniform-bootstrap} and \ref{cor:uniform-bootstrap}.

\smallskip
\noindent\emph{Unsigned maxima (Corollaries \ref{cor:uniform-clt-unsigned} and \ref{cor:uniform-bootstrap-unsigned}).}
Apply the one-sided couplings to the reflected vectors $(X_i, -X_i)$ (and $w_i(X_i - \bar X_n)$ likewise) in dimension $2p$, whose coordinate maxima are $\max_j|(S_n)_j|$ and $\max_j|(S_n^{\mathrm{WB}})_j|$ with Gaussian analog $\max_j|Z_j|$. Put $L_p=\log(2p)$ and let $c=1$ in the Gaussian case and $c=b$ in the bootstrap case. The two coupling remainders are then
\[
\Delta_1(c)=\frac{cK^2L_p^{3/2}}{\sqrt n},
\qquad
\Delta_2(c)=\frac{c^2K^4L_p^2(\log n)^2}{n}.
\]
By Proposition \ref{prop:unsigned-density-bound}, uniformly in $t\in\mathbb R$,
\[
\mathbb{P}\Big\{t - 6\varepsilon < \max_j|Z_j| \le t + 6\varepsilon\Big\}
\;\lesssim\;
\frac{\varepsilon L_p}{\sigma_{\mathrm{max}}}.
\]
Balancing this term against $\varepsilon^{-2}\Delta_1(c)$ gives
\(
\varepsilon^*
\asymp
\sigma_{\mathrm{max}}^{1/3}c^{1/3}K^{2/3}L_p^{1/6}n^{-1/6}
\)
and
\[
\frac{\varepsilon^*L_p}{\sigma_{\mathrm{max}}}
+(\varepsilon^*)^{-2}\Delta_1(c)
\asymp
\sigma_{\mathrm{max}}^{-2/3}
\left(\frac{c^2K^4L_p^7}{n}\right)^{1/6}
=:R^*.
\]
Assumption~\ref{assn:sampling} gives $\Delta_2(c)\lesssim\Delta_1(c)$. If $\varepsilon^*$ is below the coupling threshold $cK^2n^{-1/2}(\log n)L_p$, direct substitution gives
\[
R^*
\gtrsim
\frac{\sqrt n}{cK^2L_p^{1/2}(\log n)^2}
\gtrsim1
\]
by Assumption~\ref{assn:sampling}, so the bound is trivial; otherwise $\varepsilon^*$ is admissible and gives $R^*$. For the bootstrap result, take the expectation of the $t$-uniform conditional remainder exactly as in the signed case. \qed

\end{appendix}

\end{document}